\tikzset{labl/.style={anchor=south, rotate=90, inner sep=.5mm}}
\tikzset{labr/.style={anchor=north, rotate=90, inner sep=1mm}}
\renewcommand{\arraystretch}{1.5}
\newcommand{\bPhi}{\underline{\Phi}}
\DeclareMathOperator{\Quot}{Quot}
\newcommand{\field}[1]{\mathbf #1}
\newcommand{\mf}[1]{\mathfrak #1}
\newcommand{\ms}[1]{\mathcal #1}
\renewcommand{\phi}{\varphi}
\newcommand{\FT}{\Delta^+}
\newcommand{\aff}{\text{\rm aff}}
\DeclareMathOperator{\Poly}{\ms P}
\DeclareMathOperator{\ord}{ord}
\DeclareMathOperator{\Proj}{Proj} \DeclareMathOperator{\Sym}{Sym}
\newcommand{\WeierstrassFibrations}{\mathcal{W\!F}}
\newcommand{\WeierstrassData}{\mathcal{WD}}
\DeclareMathOperator{\I}{I} \DeclareMathOperator{\II}{II}
\DeclareMathOperator{\III}{III} \DeclareMathOperator{\IV}{IV}
 \DeclareMathOperator{\codim}{codim}
\newcommand{\R}{\field R}
\newcommand{\C}{\field C}
\newcommand{\F}{\field F}
\newcommand{\Z}{\field Z}
\newcommand{\N}{\field N}
\newcommand{\simto}{\stackrel{\sim}{\to}}
\DeclareMathOperator{\Spec}{Spec} \DeclareMathOperator{\spec}{Spec}
\renewcommand{\P}{\field P}
\newcommand{\A}{\field A}
\DeclareMathOperator{\Pic}{Pic}
\DeclareMathOperator{\GL}{GL} \DeclareMathOperator{\PGL}{PGL}
\DeclareMathOperator{\NS}{NS}
\newcommand{\m}{\boldsymbol{\mu}}
\newcommand{\G}{\field G} %for the multiplicative and additive groups
\renewcommand{\H}{\operatorname{H}}
\newcommand{\llbracket}{[\!\hspace{0.03em}[}
\newcommand{\rrbracket}{]\!\hspace{0.03em}]}
\newcommand{\llparen}{(\!\hspace{0.03em}(}
\newcommand{\rrparen}{)\!\hspace{0.03em})}
\DeclareMathOperator*{\tensor}{\otimes}
\newcommand{\inj}{\hookrightarrow}
\newcommand{\id}{\operatorname{id}}
\DeclareMathOperator{\Aut}{\operatorname{Aut}}
\DeclareMathOperator{\Isom}{\operatorname{Isom}}
\DeclareMathOperator{\Hom}{\operatorname{Hom}}
\DeclareMathOperator{\M}{\operatorname{M}}
\DeclareMathOperator{\Br}{\operatorname{Br}}
\newcommand{\eps}{\varepsilon}
\renewcommand{\mathbb}{\mathbf}
\newtheorem{lem}{Lemma}[section]
\newtheorem{thm}[lem]{Theorem}
\newtheorem{prop}[lem]{Proposition}
\newtheorem{cor}[lem]{Corollary}
\theoremstyle{definition}
\newtheorem{defn}[lem]{Definition}
\newtheorem{example}[lem]{Example}
\newtheorem{notation}[lem]{Notation}
\newtheorem{conditions}[lem]{Conditions}
\theoremstyle{remark}
\newtheorem{remark}[lem]{Remark}
\newtheorem{question}[lem]{Question}
\numberwithin{equation}{lem}
\author{Daniel Bragg}
\email{braggdan@berkeley.edu}
\address{Evans Hall, University of California, Berkeley CA 94720, USA}
\author{Max Lieblich}
\email{lieblich@uw.edu}
\address{Padelford Hall, University of Washington, Seattle WA 98195, USA}
\title[Perfect points on curves of genus one]{Perfect points on curves of genus one and consequences for supersingular K3 surfaces}
\keywords{supersingular K3 surface, purely inseparable point, Weierstrass fibration, Frobenius, elliptic surface}
\begin{document}
\begin{abstract}
  We describe a method to show that certain elliptic surfaces do not admit
  purely inseparable multisections (equivalently, that genus one curves over
  function fields admit no points over the perfect closure of the base field)
  and use it to show that any non-Jacobian elliptic structure on a very general
  supersingular K3 surface has no purely inseparable multisections. We also
  describe specific examples of genus 1 fibrations on supersingular K3 surfaces without purely inseparable
  multisections.
  %Finally, we discuss the consequences for the claimed proof of 
  %the Artin conjecture on unirationality of supersingular K3 surfaces.
\end{abstract}
\maketitle
\setcounter{tocdepth}{2}
\tableofcontents

\section{Introduction}
\label{sec:introduction}

In this note we study the following question. Fix an algebraically closed field
$k$ of characteristic at least $5$.

\begin{question}\label{ques:question}
  When does an elliptic surface $f:X\to\P^1$ admit a purely inseparable
  multisection? Equivalently, when does the corresponding genus $1$ curve
  $C/k(t)$ have points over the perfect closure $k(t)^{\text{\rm perf}}$?
\end{question}

There has been some previous work on purely inseparable points on curves
\cite{MR1484697}, abelian varieties \cite{MR3317331,1702.07142}, and torsors
\cite{1808.03960}. Roughly speaking, these authors have found that for
non-isotrivial fibrations the set of purely inseparable points tends to be small.
Thus, one might suspect that it is quite often the case that this set is empty.

We will give a complete answer to Question \ref{ques:question} below for $X$
that are suitably general in the moduli space of supersingular K3 surfaces. One
of the main consequences of our techniques is the following.

\begin{thm}\label{thm:very general bidness}
  Suppose $i$ is $8$, $9$, or $10$. For a very general supersingular K3 surface
  $X$ of Artin invariant $i$, every elliptic fibration $f:X\to\P^1$ has the
  property that it has a purely inseparable multisection if and only if it has a
  section.
\end{thm}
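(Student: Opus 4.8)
First I would dispose of the trivial implication: a section of $f$ is a purely inseparable multisection (of degree $1$), so everything is in showing that a \emph{non-Jacobian} elliptic fibration on such an $X$ has no purely inseparable multisection at all. Write $K = k(t)$ and let $C = X_\eta$ be the generic fibre, a genus one curve over $K$ with Jacobian $E$; a purely inseparable multisection of degree $p^n$ is exactly a $K^{1/p^n}$-point of $C$, so $f$ has one precisely when $C(K^{\mathrm{perf}}) \neq \emptyset$. Using the isomorphism $K^{1/p^n} \cong K$ given by the $p^n$-power map, such a point is the same as a $K$-rational point of the Frobenius twist $C^{(p^n)}$, and I would let $X_n$ be the minimal elliptic surface over $\P^1$ attached to it — concretely the minimal resolution of the base change of $X \to \P^1$ along the $n$-fold iterate of the (degree $p$, totally ramified) Frobenius $\P^1 \to \P^1$. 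This reduces the statement to the assertion that \emph{no $X_n \to \P^1$ admits a section}; equivalently, writing $J = \Jac(X/\P^1)$ and $J_n = \Jac(X_n/\P^1)$ (the corresponding Frobenius twists of $J$), the class of $X_n$ as a torsor under $J_n$ is nonzero for every $n$. This is precisely the form in which the criterion of the earlier sections applies, and the remaining task is to verify it here.

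Next I would marshal the lattice theory, using that for a very general supersingular K3 surface of Artin invariant $i$ the Néron–Severi lattice $\NS(X)$ is the $p$-elementary supersingular K3 lattice $\Lambda_i$ of rank $22$ and discriminant group $(\Z/p)^{2i}$. A primitive isotropic $e\in\Lambda_i$ spanning an elliptic fibration has $\langle e,\Lambda_i\rangle$ equal to $\Z$ or to $p\Z$ — over $\Z_p$ the form is an orthogonal sum of a unimodular lattice and $p$ times a unimodular lattice, so greater divisibility of the functional $\langle e,-\rangle$ would make $e$ imprimitive — whence a non-Jacobian fibration has minimal multisection degree exactly $p$, and the torsor class $[C]\in\H^1(K,E)$ has order $p$. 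Splitting off from $\NS(X)$ the rank-two sublattice spanned by a fibre and a minimal multisection (of discriminant $p^2$) leaves a frame lattice $W$ of discriminant $p^{2(i-1)}$; since $\NS(J)=U\oplus W$ with $U$ the hyperbolic plane of a fibre and a zero section, it follows that $\sigma(J)=i-1$. The more delicate bookkeeping is to follow $\NS$, hence the Artin invariant, under the Frobenius pullback $X\mapsto X_1$ and its iterates, always subject to $\sigma(X_n),\sigma(J_n)\le 10$ because the $X_n$ and $J_n$ are again supersingular K3 surfaces.

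The core of the argument is to show that when $X$ is very general of Artin invariant $i\in\{8,9,10\}$, every one of its countably many non-Jacobian fibrations has nonzero torsor class at every Frobenius twist. I would identify these torsor classes with elements of the Brauer groups $\Br(J_n)$, which are one-dimensional for a supersingular K3, and check that the maps $\Br(J_n)\to\Br(J_{n+1})$ induced by Frobenius are injective away from a finite set of $k$-points; then the locus, in the moduli of the relevant objects, on which some twist has vanishing class is a countable union of closed subsets, and the hypothesis is what forces these to be \emph{proper}. The mechanism is that the surfaces $X_n$ reach Artin invariant $10$ after $10-i$ twists, and the burden of the argument is a dimension count — feeding in that the moduli of supersingular K3 surfaces of Artin invariant $\sigma$ has dimension $\sigma-1$ — showing that the condition ``some Frobenius twist is Jacobian'' is not identically satisfied once $i\ge 8$. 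A very general $X$ then lies off all of these loci, so no $X_n$ is Jacobian and $f$ has no purely inseparable multisection.

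The step I expect to be the main obstacle is this last paragraph. I would need: (i) that the Frobenius base changes of $X$ and $J$ have only rational double point singularities, so the $X_n$ and $J_n$ genuinely are K3 surfaces and the torsor group can be identified with $\Br(J_n)$; (ii) a precise enough description of $\NS(X_n)$ and $\NS(J_n)$ to push the Artin-invariant accounting through all the twists; and (iii) a dimension count sharp enough to isolate the threshold $i=8$, in particular to rule out that the vanishing locus exhausts the moduli space for $i\ge 8$ — which one does not expect for smaller $i$, where, consistently with the explicit fibrations exhibited elsewhere in the paper, the conclusion should actually fail.
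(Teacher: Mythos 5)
Your opening reduction --- a purely inseparable multisection exists iff the torsor class dies on the minimal resolution of some Frobenius base change of the Jacobian --- matches the paper (Propositions \ref{prop:meat} and \ref{prop:cheese}), and your lattice observation that the Jacobian $J$ has Artin invariant $i-1$ is the starting point of Lemma \ref{lem:generic is generic}. But the core of your argument is a genuine gap, and the plan you sketch for it would not go through with the tools at hand. You propose to show directly that the class of $X$ in $\Br(J)$ survives every Frobenius pullback by proving that the maps $\Br(J_n)\to\Br(J_{n+1})$ are ``injective away from a finite set of $k$-points''. No mechanism is offered for this, and it is not a statement the paper's methods give: the only cohomological control available (via the splitting criterion of Proposition \ref{prop:cohoinj}, governed by the additive fiber configuration and the divisor-splitting computation on $\P^1$) is injectivity of the pullback on $\H^2(\ms O)$ --- the tangent space of the formal Brauer group --- not injectivity on Brauer groups. (Your remark that $\Br(J_n)$ is one-dimensional conflates it with $\H^2(J_n,\ms O)$; and Proposition \ref{prop:isotrivial pi} shows that torsor classes on supersingular fibrations can indeed die under inseparable base change.) Because the control is only first-order, the paper never argues about the class of $X$ itself: it places $X$ in the Artin--Tate (twistor) family over $J$, whose tangent class generates $\H^2(J,\ms O_J)$, proves that the geometric generic member of that family has no purely inseparable multisection (Theorem \ref{thm:twistor}), and then transfers this to $X$ using the trivialization \eqref{eq:trivial} of the marked family over the Artin-invariant-$i$ locus together with the finite-type Hom-scheme argument (Corollary \ref{cor:artin tate all the things}). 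This transfer step has no counterpart in your outline, and without it injectivity on $\H^2(\ms O)$ says nothing about the particular fibration $f$.

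The dimension count you defer to is also where the threshold $i\geq 8$ actually lives, and your version of it is aimed at the wrong objects: the paper's count is not about the Artin invariants of the pullbacks $X_n$ (your claim that they reach Artin invariant $10$ after $10-i$ twists is unsubstantiated and plays no role), but about the $21$-dimensional space $\WeierstrassData^{-2}$ of Weierstrass data for the Jacobian. There, the supersingular non-$\infty$-Frobenius-split locus has codimension at least $13$ (Proposition \ref{prop:non-pi-inj-small-total}, via the list of critical fiber configurations), while the Artin invariant $s$ locus has codimension $19-s$ (Corollary \ref{cor:dim cram}); hence every Jacobian elliptic structure on a very general supersingular K3 of Artin invariant $s\geq 7$ is $\infty$-Frobenius-split (Theorem \ref{thm:hulk smush}), and $i\geq 8$ enters only because the Jacobian of any fibration on a very general Artin-invariant-$i$ surface is itself generic of Artin invariant $i-1\geq 7$ (Lemma \ref{lem:generic is generic}). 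Without (a) the Frobenius-splitting criterion and this codimension bound, and (b) the twistor-family transfer, your outline does not yield the theorem.
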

That is, for genus $1$ curves $C/k(t)$ whose minimal model is a suitably general
supersingular K3 surface, the presence of a purely inseparable point implies the existence of a rational point.

If $X$ is a supersingular K3 surface of Artin invariant $10$, then no elliptic fibration on $X$ has a section (see e.g. \cite[Proposition 13.1]{MR3300417}). Thus, Theorem \ref{thm:very general bidness} implies the following result.

\begin{cor}
    If $X$ is a very general supersingular K3 surface, then no elliptic fibration $f:X\to\P^1$ has a purely inseparable multisection. 
\end{cor}

\subsection{The idea}
The basic idea of the proof is the following. We can cover the moduli space of
supersingular K3 surfaces by Artin--Tate families (as described in
\cite{1804.07282}), so, by restricting to the first-order deformations of
universal formal Brauer classes, we can reduce the existence of purely
inseparable multisections on elliptic fibrations on general supersingular K3
surfaces to a problem about the action of powers of Frobenius on coherent
cohomology. We briefly go into more detail and give a simultaneous outline of
the paper.

Let $f:X\to\P^1$ be a non-Jacobian elliptic supersingular K3 surface. The fibration $f:X\to\P^1$ can be thought of as a torsor under the
Jacobian fibration $g:J\to\P^1$. Since $X$ is a supersingular K3 surface, this torsor must
have index $p$. The Artin--Tate isomorphism \cite[\S 3]{MR1610977} tells us that
$X$ corresponds to an element $\alpha$ of the Brauer group of $J$. Moreover, by
the special properties of supersingular K3 surfaces, the fibration $f$ can be
put in a $1$-parameter family of $J$-torsors that contains $J$ itself. That is,
$\alpha$ fits into a family of Brauer classes
$\widetilde\alpha\in\Br(J\times\A^1)$ whose restriction
$\widetilde\alpha|_{t=0}$ is trivial. (These are called \emph{Artin--Tate
families\/} and are a particular case of the \emph{twistor spaces\/} studied in
\cite{1804.07282}.) The tangent value of $\alpha$ over $k[t]/(t^2)$ can
naturally be thought of as an element of $\H^2(J,\ms O_J)$. 

As explained in Section \ref{sec:main event}, if $X$ is to have a purely
inseparable multisection, we must have that $\alpha$ vanishes after passing to a
minimal model $Y$ of the pullback of $J\to\P^1$ by some power of the Frobenius
morphism $F:\P^1\to\P^1$. The action of the Frobenius on the restriction
$\widetilde\alpha|_{k[t]/(t^2)}$ is identified with the pullback action
$\H^2(J,\ms O_J)\to\H^2(Y,\ms O_Y)$. We are thus led to try to understand when
this pullback map is injective, for if it is, then a general restriction of
$\widetilde\alpha$ cannot become zero upon such a pullback. (This last statement
involves an understanding of the way in which the Artin--Tate families trace out
the moduli space of supersingular K3 surfaces.)

As we discuss in Section \ref{sec:frobenius split elliptic fibrations} (using
technical inputs about divisors explained in Section \ref{sec:frob-split}), the
properties of the Frobenius pullback map on cohomology are determined in large
part by the configuration of singular fibers of $g$. Combining Lemma \ref{lem:frob split}
and Proposition \ref{prop:cohoinj}, we describe a condition on the singular fibers of an
elliptic fibration which implies that this pullback map is injective. Our condition can
be easily checked given a Weierstrass model for the elliptic surface. In Section
\ref{sec:explicit examples}, we record some explicit examples of elliptic
supersingular K3 surfaces verifying this condition and hence deduce the
existence of elliptic supersingular K3 surfaces without purely inseparable
multisections (see Corollary \ref{cor:twistor}ff and Corollary \ref{cor:they
exist!}).

We next wish to demonstrate the existence of supersingular K3 surfaces that have
no (non-Jacobian) elliptic fibrations with purely inseparable multisections.
(Any supersingular K3 surface admits many elliptic structures, so this is a
stronger claim). Consider a Jacobian elliptic fibration, with canonically
associated Weierstrass model given by an equation $y^2=x^3+a(t)x+b(t)$. The
singular fibers of additive reduction (which determine the behavior of the
Frobenius pullback) impose conditions on the polynomials $a$ and $b$. One can
thus compute (or at least bound the dimension of) the locus of Weierstrass
equations where the pullback by powers of the Frobenius acts trivially on $\H^2(J,\ms O_J)$. On
the other hand, one also knows the dimension of the locus of Weierstrass
equations corresponding to supersingular K3 surfaces of fixed Artin invariant.
Putting these dimension bounds together, we can show that the Artin invariant
$7$ (or higher) locus in the space of Weierstrass equations is too large to be
contained in the locus of Weierstrass equations for which some power of
Frobenius is non-injective on $\H^2$. We carry this out in Section
\ref{sec:moduli} and Section \ref{sec:non-pi-inj}. In Section \ref{sec:very
general}, we show how the theory of twistor lines developed in \cite{1804.07282}
then gives Theorem \ref{thm:very general bidness}.

Since writing this paper, we have learned that Fakhruddin seems to be the first
person to observe that one can create a geometric realization of the universal
formal Brauer element of a K3 surface as a deformation of a Jacobian elliptic
fibration. The reader is referred to \cite[Lemma 4]{MR1904084} for details. We
thank Fakhruddin for making us aware of his work.

\subsection{Consequences for the Artin conjecture}
  In \cite{Liedtke15}, the existence of purely inseparable multisections for
  generic members of Artin--Tate families (in the notation of \cite{1804.07282})
  is asserted as the ``technical heart" \cite[Proposition 3.5 and Theorem
  3.6]{Liedtke15} of the work on Artin's conjecture that supersingular K3 surfaces are unirational. Theorem
  \ref{thm:very general bidness} shows that, in fact, such purely inseparable
  multisections almost never exist, in the sense that a very general
  supersingular K3 surface admits no fibrations that admit such multisections.
  We also include explicit examples in Section \ref{sec:explicit examples} that
  directly contradict \cite[Proposition 3.5]{Liedtke15} (see Corollary
  \ref{cor:twistor}ff and Corollary \ref{cor:they exist!}). The examples in Section \ref{sec:explicit examples} and the more general result in Theorem \ref{thm:very general bidness} invalidate
  \cite[Theorem 5.1 and Theorem 5.3]{Liedtke15} (cf. \cite[p 981]{Liedtke15}).
  
  The errors in \cite{Liedtke15} have now been acknowledged in \cite{Liedtke2021}. In particular, the Artin conjecture remains a conjecture.

\subsection*{Acknowledgments}

During the course of this work, we received helpful comments from Tony
Várilly-Alvarado, Jennifer Berg, Bhargav Bhatt, Daniel Hast, Wei Ho, Daniel
Huybrechts, Christian Liedtke, Davesh Maulik, Yuya Matsumoto, Martin Olsson, and
Matthias Schütt. We also received numerous helpful comments from the referee.

The second author thanks Rice University, Stanford University, and the
University of California at Berkeley for their hospitality during the 2018--2019
academic year, when this work was carried out. He was also supported by NSF
grant DMS-1600813 and a Simons Foundation Fellowship during this period. This
material is based in part upon work supported by the National Science Foundation
under Grant Number DMS-1440140 while the authors were in residence at the
Mathematical Sciences Research Institute in Berkeley, California, during the
Spring 2019 semester. 

\section{Definitions and notation}\label{sec:defs}

We work over a fixed algebraically closed field $k$ of positive characteristic $p$. We assume throughout that $p\geq 5$. This assumption is made to avoid the more complicated behavior of singular fibers of elliptic surfaces in characteristics $2$ and $3$.

Let $X$ be a smooth proper surface over $k$. An \textit{elliptic fibration} on
$X$ is a flat proper generically smooth morphism $X\to C$ to a smooth proper
curve whose generic fiber has genus $1$. An \textit{elliptic surface} is a smooth proper surface $X$ equipped with
an elliptic fibration.

An elliptic fibration is \textit{Jacobian} if it admits a section and
\textit{non-Jacobian} otherwise.

A \textit{multisection} of an elliptic fibration is an integral subscheme
$\Sigma\subset X$ such that the map $\Sigma\to C$ is finite and flat. A
multisection $\Sigma$ is \textit{purely inseparable} if $\Sigma\to C$ induces a
purely inseparable map on function fields.

\section{Frobenius-split divisors}
\label{sec:frob-split}

In this section we will study a certain condition on divisors $D\subset\P^1$. We
will use this in the following sections to study relative Frobenius splittings
of elliptic K3 surfaces.

We consider $\P^1_{\Z}:=\Proj\Z[s,t]$ equipped with its \emph{canonical\/}
system of homogeneous coordinates. Given a positive integer $m$, let
$\rho^{(m)}:\P^1\to\P^1$ denote the morphism given in homogeneous coordinates by
$[s:t]\mapsto [s^m:t^m]$. 
\begin{defn}\label{defn:split}
	A divisor $D\in|\ms O_{\P^1}(n)|$ is \emph{$m$-split\/} if the morphism
	$$\ms O_{\P^1}\to\rho^{(m)}_\ast\ms O_{\P^1}(n)$$ adjoint to the map
	$$(\rho^{(m)})^\ast\ms O_{\P^1}=\ms O_{\P^1}\to\ms O_{\P^1}(n)$$ associated to
	$D$ admits a splitting (as $\ms O_{\P^1}$-modules).
\end{defn}
\begin{remark}
	Since $k$ is algebraically closed, a divisor is $p^e$-split if and only if the
	natural morphism $\ms O_{\P^1}\to F^e_\ast\ms O_{\P^1}(n)$ is split, where $F$
	is the \emph{absolute\/} Frobenius. We will use this observation implicitly in
	what follows.
\end{remark}

Given a divisor $D\in|\ms O(n)|$, there is a naturally associated homogeneous
form $f(s,t)$ of degree $n$ whose vanishing locus is $D$; $f$ is unique up to
scaling.

\begin{prop}\label{prop:frob-split}
	Let $D\in |\ms O(n)|$ be a divisor with associated homogeneous form $f(s,t)$.
	Fix a positive integer $m$.
	\begin{enumerate}
		\item If $0\leq n<m$ then any divisor $D$ is $m$-split.\label{item:low power}
		\item If $m\leq n<2m$ then the divisor $D$ is $m$-split if and only if the
		form $f$ cannot be written as $s^mg+t^mh$ for homogeneous forms $g$ and $h$
		of degree $n-m$. (In other words, some term in $f$ is not divisble by $s^m$
		or $t^m$.)
	\end{enumerate}
\end{prop}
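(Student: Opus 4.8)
The plan is to make the morphism in Definition~\ref{defn:split} completely explicit on the standard affine charts of $\P^1$ and read off when it splits. Write $f=\sum_{i=0}^{n}a_is^{n-i}t^i$, and let $\phi\colon\ms O_{\P^1}\to\rho^{(m)}_\ast\ms O_{\P^1}(n)$ be the morphism of that definition — the composite of the unit $\ms O_{\P^1}\to\rho^{(m)}_\ast\ms O_{\P^1}$ with $\rho^{(m)}_\ast$ of the map $\ms O_{\P^1}\to\ms O_{\P^1}(n)$ determined by $f$. On the cover $U_0=\{s\neq0\}$, $U_1=\{t\neq0\}$ of the source $\P^1$, lying over the analogous cover $\bar U_0,\bar U_1$ of the target, the morphism $\rho^{(m)}$ restricts on $U_0$ to $\Spec$ of the ring map $k[\bar u]\to k[u]$, $\bar u\mapsto u^m$ (with $u=t/s$, $\bar u=\bar t/\bar s$), and similarly on $U_1$. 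Hence $\rho^{(m)}_\ast\ms O_{\P^1}(n)$ is locally free of rank $m$, with $\bar U_0$-frame $s^nu^0,\dots,s^nu^{m-1}$ and $\bar U_1$-frame $t^nv^0,\dots,t^nv^{m-1}$ ($v=s/t$); routine tracking of transition functions yields the standard identification
\[\rho^{(m)}_\ast\ms O_{\P^1}(n)\;\cong\;\bigoplus_{i=0}^{m-1}\ms O_{\P^1}\!\bigl(\lfloor(n-i)/m\rfloor\bigr),\]
under which the $i$-th component $\phi_i\colon\ms O_{\P^1}\to\ms O_{\P^1}(\lfloor(n-i)/m\rfloor)$ of $\phi$ is, in the $\bar U_0$-frame, multiplication by $\sum_{0\le j\le n,\,j\equiv i\,(m)}a_j\bar u^{(j-i)/m}$. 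In particular $\phi_i\neq0$ if and only if $a_j\neq0$ for some $j$ with $0\le j\le n$ and $j\equiv i\pmod m$.

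Next I would invoke the elementary fact that a morphism $\psi=(\psi_i)_i\colon\ms O_{\P^1}\to\bigoplus_i\ms O_{\P^1}(d_i)$ has a left inverse if and only if $\psi_i\neq0$ for some $i$ with $d_i=0$. Indeed, a left inverse is a tuple $(r_i)_i$ with $\sum_ir_i\psi_i=\id_{\ms O_{\P^1}}$ in $\Hom(\ms O_{\P^1},\ms O_{\P^1})=k$; each $r_i\psi_i$ vanishes when $d_i>0$ (since $\Hom(\ms O_{\P^1}(d_i),\ms O_{\P^1})=0$) and when $d_i<0$ (since then $\psi_i\in\H^0(\ms O_{\P^1}(d_i))=0$), so some $\psi_i$ with $d_i=0$ must be a nonzero scalar; conversely such a $\psi_i$ is invertible, and $\psi_i^{-1}$ composed with the $i$-th projection splits $\psi$.

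Combining these, $D$ is $m$-split if and only if $\phi_i\neq0$ for some $i$ with $\lfloor(n-i)/m\rfloor=0$. If $0\le n<m$, this holds exactly for $0\le i\le n$, and for each such $i$ the only $j\in[0,n]$ with $j\equiv i\pmod m$ is $j=i$; so the criterion reads ``$a_i\neq0$ for some $0\le i\le n$,'' which holds since $f\neq0$ — this is \eqref{item:low power}. If $m\le n<2m$, the condition holds exactly for $n-m+1\le i\le m-1$, and again, since then $i-m<0$ and $i+m>n$, the only valid $j$ is $j=i$; hence $D$ is $m$-split if and only if $a_i\neq0$ for some $n-m+1\le i\le m-1$, i.e.\ $f$ involves a monomial $s^{n-i}t^i$ with $n-m<i<m$. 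Finally, a degree-$n$ monomial $s^{n-i}t^i$ is divisible by $s^m$ exactly when $i\le n-m$ and by $t^m$ exactly when $i\ge m$, so the forms expressible as $s^mg+t^mh$ with $\deg g=\deg h=n-m$ are precisely those supported on monomials with $i\le n-m$ or $i\ge m$; thus $f$ has that shape if and only if $a_i=0$ for all $n-m<i<m$, which is the second assertion.

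I expect the only real work to be the chart computation: pinning down the line-bundle degrees in the decomposition of $\rho^{(m)}_\ast\ms O_{\P^1}(n)$, and observing that on the only range of indices that matters — those with $\lfloor(n-i)/m\rfloor=0$ — the degree bound forces the component $\phi_i$ to be a single coefficient $a_i$ of $f$ rather than a genuine polynomial. That observation is exactly what makes the clean combinatorial criterion in both parts of the statement come out.
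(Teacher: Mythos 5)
Your proof is correct and follows essentially the same route as the paper's: both decompose $\rho^{(m)}_\ast\ms O_{\P^1}(n)$ into line bundles by grouping monomials according to the residue of the exponent modulo $m$, and both reduce the splitting question to whether the component landing in the degree-zero summands is nonzero. Your write-up is merely more self-contained (you compute the decomposition on charts and prove the splitting criterion for maps $\ms O\to\bigoplus\ms O(d_i)$, and you treat case (1) explicitly), whereas the paper quotes the known splitting type of $\rho^{(m)}_\ast\ms O(n-m)$ and twists by one; the underlying argument is the same.
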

\begin{proof}
	Write $n=m+\delta$ and $\rho$ for $\rho^{(m)}$. We have
	$$\rho_\ast\ms O(\delta)=\ms O^{\delta+1}\oplus\ms O(-1)^{m-\delta-1}$$ This
	is shown for instance in the introduction to \cite{MR2968908} when $\rho$ is a
	power of the Frobenius, but the proof given applies unchanged in our
	situation. In terms of graded modules over $k[s,t]$, the corresponding graded
	module
	$$M_\bullet=\bigoplus_n\H^0(\P^1,(\rho_\ast\ms O(\delta))(n))$$ admits the
	following description: $M_n$ is the free $k$-vector space spanned by $s^it^j$
	with $i+j=\delta+mn$. Given positive integers $i$ and $j$ such that
	$i+j=\delta$, let $N^{i,j}$ be the graded submodule of $M_\bullet$ with
	$N^{i,j}_n$ the submodule spanned by $s^at^b$ such that $a+b=\delta+mn$,
	$a\equiv i\pmod{m}$, and $b\equiv j\pmod{m}$. There is a graded splitting 
	$$M_\bullet\to N^{i,j}_\bullet$$ defined as follows: given $\alpha$ and
	$\beta$ such that $\alpha+\beta=\delta+mn$, send $s^\alpha t^\beta$ to $0$ if
	$\alpha\not\equiv i\pmod m$ or $\beta\not\equiv j\pmod m$, and to $x^\alpha
	y^\beta$ otherwise. There results a free summand
	$$N_\bullet=\bigoplus_{i+j=a}N^{i,j}_\bullet\subset M_\bullet.$$ Since the
	rank of $N$ is $\delta$, we see that this gives the full free summand of
	$\rho_\ast\ms O(\delta)$.
	
	To compute the sections of $\rho_\ast\ms O(m+\delta)=\rho_\ast\ms O(n)$, we
	shift the decomposition $M_\bullet=N_\bullet\oplus K_\bullet$ by $1$. That is,
	the non-split divisors will correspond to the elements of $N_\bullet$ of
	degree $1$. These are precisely the forms of degree $n$ in $s$ and $t$ each of
	whose terms are divisible by $s^m$ or $t^m$. This gives the desired result.
\end{proof}
\begin{remark}
	Geometrically, the $m$-split divisors in $|\ms O(n)|$ are given by the linear
	span of the copy of $\P^1\times|\ms O(n-m)|\subset|\ms O(n)|$ corresponding to
	divisor sums $\Phi+E$ with $\Phi$ a fiber of $\rho$ and $E$ a member of $|\ms
	O(n-m)|$.  
\end{remark}

We end this section with a crude estimate on the dimension of the $m$-split
locus in a particular space of polynomials. Suppose given a non-increasing
sequence $(\lambda_1,\ldots,\lambda_n)$ of positive integers such that
$\sum\lambda_i=m+\delta$ with $0\leq\delta<m$. Consider the polynomials
$$P_{\lambda_1,\ldots,\lambda_n}(t,s):=\prod(t-z_is)^{\lambda_i}$$ parametrized
by the affine space $\A^n$ with coordinates $z_1,\ldots,z_n$. When the
$\lambda_i$ and $z_j$ are clear, we will write simply $P$. The polynomial $P$
corresponds to the divisor
$$D_{\lambda_1,\ldots,\lambda_n}(z_1,\ldots,z_n):=\sum\lambda_i z_i$$ in $\P^1$.
Write $\tau_j(P)=\sum_{i=1}^j\lambda_i$ for the partial sums of the $\lambda_i$.
When $P$ is understood, we will write simply $\tau_j$. There results a strictly
increasing sequence $(\tau_1,\ldots,\tau_n)$. Let 
\begin{equation}\label{eq:the number B}
  B_m(\lambda_1,\ldots,\lambda_n)=\#\{i | \delta+1\leq\tau_i\leq m-1\}
\end{equation}

\begin{prop}\label{prop:frob-split-crude}
	With the above notation, the set $A$ of $D(z_1,\ldots,z_n)$ that are not
	$m$-split has codimension at least $B_m(\lambda_1,\ldots,\lambda_m)$ in
	$\A^n$.
\end{prop}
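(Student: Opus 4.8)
The plan is to reformulate the statement via Proposition~\ref{prop:frob-split} and then bound $\dim A$ by a Jacobian computation, organized by a stratification of $\A^n$ and an induction on $n$.

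\textit{Reformulation.} Setting $t=1$, write $P(x)=\prod_i(x-z_i)^{\lambda_i}$ and let $c_a=c_a(z_1,\dots,z_n)$ be the coefficient of $x^a$ in $P$; it is a form of degree $m+\delta-a$ in the $z_i$. By Proposition~\ref{prop:frob-split}(2), $A=V(c_{\delta+1},\dots,c_{m-1})\subseteq\A^n$. Since $i\mapsto\tau_i$ is strictly increasing, the indices $i$ with $\delta+1\le\tau_i\le m-1$ form a contiguous block $\{s+1,\dots,r\}$ of length $B:=B_m(\lambda_1,\dots,\lambda_n)=r-s$, where $\tau_s\le\delta<\tau_{s+1}$ and $\tau_r\le m-1<\tau_{r+1}$ (in particular $r<n$). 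It suffices to prove $\dim A\le n-B$.

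\textit{Stratification and induction.} I would induct on $n$, the case $n=1$ giving $B=0$. Stratify $\A^n$ by which of the $z_i$ coincide. On a stratum where the $z_i$ take $q<n$ distinct values, $P$ has the shape $\prod_\ell(x-w_\ell)^{\mu_\ell}$ for the block-sum multiplicities $(\mu_1,\dots,\mu_q)$, and that part of $A$ is isomorphic to an open subset of the analogous locus for $(\mu_1,\dots,\mu_q)$ inside $\A^q$; by induction this has dimension at most $q-B_m(\mu_1,\dots,\mu_q)$, which, granting that merging parts drops $B_m$ by at most one, is at most $q-\bigl(B-(n-q)\bigr)=n-B$. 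On the open stratum (all $z_i$ distinct) I would instead use the Jacobian of $(c_{\delta+1},\dots,c_{m-1})$. Since $\partial_{z_j}P=-\lambda_j P/(x-z_j)$ and $c_b$ vanishes on $A$ for $b$ in the window, one computes that on $A$ one has $\partial_{z_j}c_a=-\lambda_j\,z_j^{\,m-1-a}\,h(z_j)$, where $h$ is the monic degree-$\delta$ ``top part'' in $P=g+x^mh$. Thus along $A$ the Jacobian equals $W\cdot\operatorname{diag}(-\lambda_jh(z_j))_j$ with $W=[z_j^{\,m-1-a}]$ a Vandermonde matrix in the consecutive exponents $0,\dots,m-\delta-2$, so its rank at a point is the number of distinct $z_j$ at which $\lambda_jh(z_j)\ne0$, capped at $m-\delta-1\ge B$. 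Verifying that a general point of each component of the open stratum has at least $B$ of the $z_j$ nonzero, of multiplicity prime to $p$, and not roots of the associated $h$, one obtains Jacobian rank $\ge B$, hence $\dim\le n-B$ there as well.

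\textit{The combinatorial lemma.} Writing $\sigma_1<\dots<\sigma_{|\nu|}=m+\delta$ for the prefix sums of a partition $\nu$ of $m+\delta$, one has $B_m(\nu)=\#\{i:\sigma_i\le m-1\}-\#\{i:\sigma_i\le\delta\}$. If $\nu'$ arises from $\nu$ by merging two parts, then the $i$ largest parts of $\nu'$ sum to at most the $i+1$ largest parts of $\nu$, so $\sigma'_i\le\sigma_{i+1}$; together with the dominance inequality $\sigma'_i\ge\sigma_i$ this forces each of the two counts above to change by at most $1$, whence $B_m(\nu)-B_m(\nu')\le1$. Iterating gives $B_m(\mu)\ge B-(n-q)$ for every $q$-block coarsening $\mu$ of $(\lambda_1,\dots,\lambda_n)$, which closes the induction in the previous step.

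\textit{Expected main obstacle.} The delicate part is the Jacobian argument on the open stratum in characteristic $p$. When $p\mid\lambda_j$ the derivative $\partial_{z_j}P$ vanishes identically, so one should instead work in the radicial coordinates $z_j^{\,p^{v_j}}$ with $p^{v_j}\,\|\,\lambda_j$ (a homeomorphism, hence harmless for dimension counts) and check that the resulting generalized-Vandermonde matrix still attains rank $\ge B$; and one must bound the ``bad'' sublocus of $A$ where too many of the $z_j$ meet the roots of $h$ (equivalently of $g$), feeding it back into the induction. Granting these points, the rest is bookkeeping.
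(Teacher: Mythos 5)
Your reduction of non-splitness to the vanishing of the window coefficients $c_{\delta+1},\dots,c_{m-1}$, the stratification by coincidences together with the merging inequality $B_m(\nu)-B_m(\nu')\le 1$, and the on-$A$ identity $\partial_{z_j}c_a=-\lambda_j z_j^{\,m-1-a}h(z_j)$ are all correct. But the load-bearing step --- that at a general point of every component of $A$ meeting the open stratum the Jacobian has rank at least $B$ --- is precisely what you do not prove, and it is not bookkeeping. Two concrete failure modes: (i) the proposition allows $p\mid\lambda_j$ (e.g.\ $p=5$, $\lambda=(5,5,5,5)$, $m=13$, $\delta=7$, where $B=1$ but every $\partial_{z_j}P$ vanishes identically), so the honest Jacobian is zero; your radicial substitute $w_j=z_j^{p^{v_j}}$ keeps dimensions but destroys the factorization into a Vandermonde times a diagonal: on $A$ one gets $\partial_{w_j}c_a=-u_j\sum_{l\ge 1,\ a+lp^{v_j}\ge m}c_{a+lp^{v_j}}w_j^{\,l-1}$, whose entries depend on $a$ through its residue mod $p^{v_j}$ (differently for different $j$), and the rank-$\ge B$ claim for this matrix is exactly as open as the proposition itself. (ii) Your count of good columns only guarantees $n-\delta$ of the distinct $z_j$ avoid the roots of the degree-$\delta$ polynomial $h$, and $n-\delta$ can be smaller than $B$ (e.g.\ $\lambda=(12,2,2,2,2,2,2)$, $m=13$, $\delta=11$ has $B=1$ but $n-\delta<0$); a component of $A$ could a priori lie entirely in the locus where too many $z_j$ are roots of $h$. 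You propose to ``feed this bad locus back into the induction,'' but the induction is on $n$ and this locus sits inside the open stratum at the same $n$, so there is nothing to feed it into without a separate dimension bound --- a problem of essentially the same difficulty as the one being solved.

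For contrast, the paper's proof avoids derivatives entirely and is characteristic-free: after using homogeneity to pass to $\P^{n-1}$, it restricts to the linear subspace $L=\{z_1=\cdots=z_j=0\}$ with $j$ minimal such that $\tau_j\in[\delta+1,m-1]$; there the lowest $t$-degree coefficient of $P$ is the monomial $z_{j+1}^{\lambda_{j+1}}\cdots z_n^{\lambda_n}$, which must vanish on the non-split locus, so $A\cap L$ lies in a union of coordinate hyperplanes, and iterating this (the monotonicity of the $\lambda_i$ forces at least $B$ such steps before the lowest $t$-degree leaves the window) gives $\codim_L(A\cap L)\ge B$; since intersecting with a linear subspace can only decrease codimension, $\codim A\ge B$. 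If you want to salvage your route, you must actually prove the rank bound in the two situations above; as written, the open-stratum step can fail and the proposed fixes are only sketches.
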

\begin{proof}
	First, we note that the coefficients of $P_{\lambda_1,\ldots,\lambda_n}$ are
	homogeneous in $z_1,\ldots,z_n$, so it suffices to prove the statement for the
	image of $A$ in the projective space $\P^{n-1}$ with homogeneous coordinates
	$z_1,\ldots,z_n$. Let $j$ be minimal such that $\tau_j\in[\delta+1,m-1]$.
	Consider the projective subspace $L\subset\P^{n-1}$ given by the vanishing of
	$z_1,\ldots,z_j$. This is a codimension $j$ subspace, corresponding to the
	polynomials 
	$$P_{\lambda_1+\cdots+\lambda_j,\lambda_{j+1},\ldots,\lambda_n}(s,t)=t^{\tau_j}(t-z_{j+1}s)^{\lambda_{j+1}}\cdots(t-z_ns)^{\lambda_n}$$
	in the variables $z_{j+1},\ldots,z_n$. The lowest $t$-degree term of this
	polynomial is 
	$$z_{j+1}^{\lambda_{j+1}}\cdots
	z_n^{\lambda_n}t^{\tau_j}s^{p+\delta-\tau_j}.$$ Since $\tau_n<2m$, we see that
	this term must vanish in order for the polynomial not to be $m$-split. This
	means that one of $z_{j+1},\cdots,z_n$ must vanish, giving a union of
	hyperplanes in $L$. The restriction of $P$ to each of these hyperplanes has
	the form 
	$$P_{\tau_j+\lambda_q,\lambda_{j+1},\ldots,\lambda_n}$$ (with $\lambda_q$
	taken out of the sequence). Since the $\lambda_i$ are non-increasing, the
	shortest sequence of such polynomials whose lowest $t$-degree term lies in
	$[\delta+1,m-1]$ has length $B:=B_m(\lambda_1,\ldots,\lambda_m)$. 
	It follows that $A\cap L$ has codimension at
	least $B$ in $L$. Since the codimension can only go down upon intersection, we conclude
	that $A$ has codimension at least $B$, as desired.
\end{proof}

\begin{cor}\label{cor:split powers}
	In the above notation, suppose $\sum\lambda_i\leq 24$. For any positive
	integer $q$ we have
  $$B_{12q+1}(q\lambda_1,\ldots,q\lambda_n)=B_{13}(\lambda_1,\ldots,\lambda_n).$$
\end{cor}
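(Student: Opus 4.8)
The plan is to unwind the definition \eqref{eq:the number B} of $B_m$ in the two cases and observe that passing from $(\lambda_1,\ldots,\lambda_n)$ with $m=13$ to $(q\lambda_1,\ldots,q\lambda_n)$ with $m=12q+1$ rescales both the partial sums $\tau_i$ and the defect $\delta$ in a compatible way, so that the two sets being counted are literally equal.

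First I would pin down $\delta$. Write $\Sigma=\sum_i\lambda_i$. For $B_{13}(\lambda_1,\ldots,\lambda_n)$ to be defined in the sense of the preceding paragraph we need $13\le\Sigma<26$, so $\delta:=\Sigma-13$ satisfies $0\le\delta<13$; the hypothesis $\Sigma\le 24$ improves this to $\delta\in\{0,1,\ldots,11\}$, i.e.\ $\delta+1\le 12$. With $\tau_i=\sum_{j\le i}\lambda_j$, the definition reads
$$B_{13}(\lambda_1,\ldots,\lambda_n)=\#\{\,i : \delta+1\le\tau_i\le 12\,\}.$$

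Next I would do the bookkeeping for the scaled data $(q\lambda_1,\ldots,q\lambda_n)$ with modulus $m=12q+1$. Its total is $q\Sigma=13q+q\delta$, which I rewrite as $(12q+1)+\delta'$ with $\delta'=q(\delta+1)-1$. I must check $0\le\delta'<12q+1$: the lower bound is clear since $q\ge 1$ and $\delta\ge 0$, and the upper bound $\delta'=q(\delta+1)-1\le 12q-1<12q+1$ holds precisely because $\delta+1\le 12$ — this single inequality is the only place the hypothesis $\sum\lambda_i\le 24$ enters. The partial sums of the scaled sequence are $q\tau_i$, so by definition
$$B_{12q+1}(q\lambda_1,\ldots,q\lambda_n)=\#\{\,i : \delta'+1\le q\tau_i\le 12q\,\}.$$

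Finally, since $q>0$, the condition $\delta'+1\le q\tau_i$ is equivalent to $q(\delta+1)\le q\tau_i$, hence to $\delta+1\le\tau_i$, and likewise $q\tau_i\le 12q$ is equivalent to $\tau_i\le 12$; thus the two displayed sets are the same subset of $\{1,\ldots,n\}$ and the two quantities agree. I do not expect any genuine obstacle here: the entire argument is substitution into \eqref{eq:the number B} together with the verification $\delta'<12q+1$, which is exactly what the bound $\sum\lambda_i\le 24$ buys.
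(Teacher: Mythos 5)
Your proposal is correct and follows essentially the same route as the paper: both arguments rest on the identity $\delta'+1=q(\delta+1)$ together with $\tau_j(q\lambda_1,\ldots,q\lambda_n)=q\tau_j(\lambda_1,\ldots,\lambda_n)$, so that the two counting sets coincide. Your explicit verification that $0\le\delta'<12q+1$ (the only place $\sum\lambda_i\le 24$ is used) is a point the paper leaves implicit, but it is the same proof.
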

\begin{proof}
  Write $\sum\lambda_i=13+\delta$ and $\sum q\lambda_i=12q+1+\delta'$. We have
  $13q+\delta q=12q+1+\delta'$, so that $q(1+\delta)=\delta'+1$. Note that the
  partial sums $\tau_j(q\lambda_1,\ldots,q\lambda_n)$ satisfy 
  $$\tau_j(q\lambda_1,\ldots,q\lambda_n)=q\tau_j(\lambda_1,\ldots\lambda_n).$$
  The bounds on the partial sums $\tau_j(q\lambda_1,\ldots,q\lambda_n)$ to
  compute $B_{12q+1}(q\lambda_1,\ldots,q\lambda_n)$ are $\delta'+1$ and $12q$.
  On the other hand, by the above calculation we have $\delta'+1=q(1+\delta)$.
  The set of $\tau_j(q\lambda_1,\ldots,q\lambda_n)$ lying between $\delta'+1$
  and $12q$ is thus the same as the set of $\tau_j(\lambda_1,\ldots,\lambda_n)$
  lying between $\delta+1$ and $12$. This gives the desired equality.
\end{proof}

\begin{cor}\label{cor:codim-case-13}
  Suppose $\sum_{i=1}^n\lambda_i\leq 24$ and let
  $B=B_{13}(\lambda_1,\ldots,\lambda_n)$. Then the locus of
  $(z_1,\ldots,z_n)\in\A^n$ such that 
	$$\frac{p^{2e}-1}{12}D(z_1,\ldots,z_n)$$ is not $p^{2e}$-split for some
	positive integer $e$ can be written as an ascending union of closed subschemes
	$Z_{2e}$ each of codimension at least $B$. In particular, $\bigcup_e Z_{2e}$
	cannot contain any closed subspace $Y\subset D(z_1,\ldots,z_n)$ of codimension
	smaller than $B$.
\end{cor}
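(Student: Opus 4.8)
The plan is to combine Corollary \ref{cor:split powers} with Proposition \ref{prop:frob-split-crude} applied to each power $p^{2e}$ separately, and then take the union over $e$. First I would fix a positive integer $e$ and set $q=(p^{2e}-1)/12$; note $q$ is indeed a positive integer since $p\geq 5$ forces $p^2\equiv 1\pmod{12}$, hence $p^{2e}\equiv 1\pmod{12}$. Then $\frac{p^{2e}-1}{12}D(z_1,\ldots,z_n)$ is the divisor $D_{q\lambda_1,\ldots,q\lambda_n}(z_1,\ldots,z_n)$ associated to the polynomial $P_{q\lambda_1,\ldots,q\lambda_n}$, whose degree is $\sum q\lambda_i=q\sum\lambda_i$. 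The hypothesis $\sum\lambda_i\leq 24$ gives $\sum q\lambda_i\leq 24q<2(12q+1)$, so we are in the range $m\leq \deg\leq 2m-1$ of Proposition \ref{prop:frob-split}(2) with $m=12q+1=p^{2e}$ (the lower bound $\sum q\lambda_i\geq 12q+1$ is automatic when $\sum\lambda_i\geq 13$; if $\sum\lambda_i\leq 12$ then by Proposition \ref{prop:frob-split}(1) the divisor is always $m$-split and $B=0$, so the statement is vacuous and we may assume $\sum\lambda_i\geq 13$).

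Next I would invoke Proposition \ref{prop:frob-split-crude} with this $m=p^{2e}$: the set $Z_{2e}$ of $(z_1,\ldots,z_n)\in\A^n$ for which $\frac{p^{2e}-1}{12}D(z_1,\ldots,z_n)$ is \emph{not} $p^{2e}$-split is a closed subscheme (cut out by the vanishing of the offending coefficients of $P_{q\lambda_1,\ldots,q\lambda_n}$, so it is genuinely closed, not merely constructible) of codimension at least $B_{p^{2e}}(q\lambda_1,\ldots,q\lambda_n)$. By Corollary \ref{cor:split powers}, $B_{p^{2e}}(q\lambda_1,\ldots,q\lambda_n)=B_{12q+1}(q\lambda_1,\ldots,q\lambda_n)=B_{13}(\lambda_1,\ldots,\lambda_n)=B$. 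Hence each $Z_{2e}$ has codimension at least $B$ in $\A^n$.

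It remains to see that the $Z_{2e}$ form an ascending chain. This is the only point requiring an argument beyond bookkeeping, and I expect it to be the main (though still mild) obstacle. The idea is that $p^{2e}$-splitness implies $p^{2e'}$-splitness for $e'\geq e$: if $\ms O_{\P^1}\to F^{2e}_\ast\ms O_{\P^1}(n)$ splits then applying $F^{2(e'-e)}_\ast$ and composing with the (split) unit $\ms O_{\P^1}\to F^{2(e'-e)}_\ast\ms O_{\P^1}$ shows $\ms O_{\P^1}\to F^{2e'}_\ast\ms O_{\P^1}(n)$ splits — here one uses that $F^a_\ast$ sends split injections to split injections and that on $\P^1$ over an algebraically closed field the structure sheaf is a direct summand of any Frobenius pushforward of a line bundle of nonnegative degree (this is exactly the decomposition $\rho_\ast\ms O(\delta)=\ms O^{\delta+1}\oplus\ms O(-1)^{m-\delta-1}$ used in the proof of Proposition \ref{prop:frob-split}, which has $\ms O$ as a summand precisely when $\delta\geq 0$). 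Thus non-$p^{2e'}$-split implies non-$p^{2e}$-split, i.e. $Z_{2e'}\supseteq Z_{2e}$, so $\{Z_{2e}\}_e$ is ascending and $\bigcup_e Z_{2e}$ is an ascending union of closed subschemes of codimension $\geq B$. Finally, since an ascending union of closed subschemes each of codimension $\geq B$ cannot contain an irreducible (equivalently, any) closed subscheme $Y$ of codimension $<B$ — such a $Y$ would be covered by the $Z_{2e}\cap Y$, which by Noetherianity stabilizes, forcing $Y\subseteq Z_{2e}$ for some $e$ and hence $\operatorname{codim} Y\geq B$, a contradiction — we conclude the last assertion.
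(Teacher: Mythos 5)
Your core reduction is exactly the paper's: for each $e$ put $q=(p^{2e}-1)/12$ (an integer because $p^{2e}\equiv 1\pmod{12}$ for $p>3$), identify $\frac{p^{2e}-1}{12}D(z_1,\ldots,z_n)$ with $D_{q\lambda_1,\ldots,q\lambda_n}(z_1,\ldots,z_n)$, apply Proposition \ref{prop:frob-split-crude} with $m=p^{2e}=12q+1$, and use Corollary \ref{cor:split powers} to convert the bound $B_{p^{2e}}(q\lambda_1,\ldots,q\lambda_n)$ into $B$; the paper's proof is precisely this and nothing more, and your remark that each non-split locus is genuinely closed (it is cut out by the vanishing of the coefficients of all monomials $s^at^b$ of $P^{q}$ with $a,b<p^{2e}$, by Proposition \ref{prop:frob-split}(2)) is a correct and worthwhile addition.

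The two auxiliary steps you single out as the real work are, however, both faulty as written. For the ascending chain: your monotonicity statement concerns a \emph{fixed} divisor, whereas $Z_{2e}$ concerns the divisor $\frac{p^{2e}-1}{12}D$, which changes with $e$, so it does not apply; and even for a fixed divisor, the contrapositive of the implication you prove (``not $p^{2e'}$-split implies not $p^{2e}$-split'' for $e'\geq e$) yields $Z_{2e'}\subseteq Z_{2e}$, a \emph{descending} chain, not the containment you assert --- what the ascent of the actual $Z_{2e}$ would require is the opposite, level-lowering direction. Fortunately no monotonicity is needed at all: let $W_{2e}$ be the closed, codimension $\geq B$ non-$p^{2e}$-split locus of $\frac{p^{2e}-1}{12}D$ and set $Z_{2e}=\bigcup_{e''\leq e}W_{2e''}$; these are closed, of codimension at least $B$, ascend by construction, and their union is the locus in question. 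For the final sentence, the justification ``by Noetherianity stabilizes'' is false: in a Noetherian space it is descending chains of closed subsets that stabilize, not ascending ones. The correct argument is via generic points: if a closed $Y$ lies in $\bigcup_e Z_{2e}$, the generic point of any irreducible component $Y'$ of $Y$ lies in some $Z_{2e}$, and since $Z_{2e}$ is closed this forces $Y'\subseteq Z_{2e}$, whence $\codim Y\geq B$. Both repairs are routine, so the proposal is correct in substance and follows the paper's route, but these two steps should be replaced as indicated.
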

\begin{proof}
  This follows immediately from Corollary \ref{cor:split powers}, once we note
  that $p^{2e}\equiv 1\pmod{12}$ for all positive integers $e$ and all primes
  $p>3$.
\end{proof}

\begin{example}\label{example:8II}
	Consider the polynomial
	$$(t-z_1s)^2(t-z_2s)^2\cdots(t-z_8s)^2$$ of total degree $16$. The partial
	sums are $(2,4,6,8,10,12,14,16)$. We have
	\[
	  B_{13}(2,2,2,2,2,2,2,2)=\#\left\{i|4\leq\tau_i\leq 12\right\}=5
	\]
	We conclude that the codimension of the non-$13$-split locus in $\A^8$ is at
	least $5$. As we will see below, this implies that the locus of Weierstrass
	fibrations associated to elliptic K3 surfaces whose additive fiber
	configuration is $8\II$ and which are not $\infty$-Frobenius split (see
	Section \ref{sec:frobenius split elliptic fibrations}) has codimension at
	least $13$ in the space of Weierstrass fibrations (see Section \ref{sec:moduli}). Note that the locus admitting configuration $8\II$ has codimension 8 in the space of Weierstrass fibrations, as explained in Section \ref{sec:moduli}. This is the smallest
	codimension bound on a non-$\infty$-Frobenius-split locus to come out of our methods.
\end{example}

\section{Frobenius split elliptic fibrations}\label{sec:frobenius split elliptic fibrations}

In this section we introduce a notion of Frobenius splitting of an elliptic
fibration. Let $f:X\to C$ be an elliptic surface. For any $e>0$, there is an
induced diagram
\begin{equation}\label{eq:the situation room}
\begin{tikzcd}
Y_e\ar[dr, "\phi_e"'] \ar[r, "g_e"] & X^{(e)}\ar[r, "W^e"]\ar[d, "f^{(e)}"] & X\ar[d, "f"] \\
& C\arrow{r}{F^e} & C
\end{tikzcd}
\end{equation}
where $F^e$ is the $e$th power of the absolute Frobenius of $C$, the square is
cartesian, and $g_e:Y_e\to X^{(e)}$ is the minimal resolution of $X^{(e)}$. For
the sake of symmetry, we will write $Y_0=X$ and $\phi_0=f$. There is an induced
map
\begin{equation}\label{eq:cohomap11}
\R^1f_*\ms O_X\to F^e_*\R^1(\varphi_e)_*\ms O_{Y_e}
\end{equation}
of locally free sheaves on $C$. We note that $\R^1f_*\ms O_X$ is an invertible
sheaf, 
%(dual to the fundamental line bundle $\mathbb{L}$ of \cite[Section II.4]{MR1078016}), 
while the sheaf on the right hand side is locally free of
rank $p^e$.
\begin{defn}\label{def:frobenius split elliptic fibration}
	An elliptic fibration $f:X\to C$ is \emph{$e$-Frobenius-split\/} if the map
	\eqref{eq:cohomap11} is split. If $f$ is $e$-Frobenius-split for all $e>0$
	then we say that $f$ is \emph{$\infty$-Frobenius-split\/}.  
\end{defn}

Note that both sheaves in (\ref{eq:cohomap11}) are unchanged upon replacing $X$
and $Y_e$ with their relatively minimal models. In particular, if $f':X'\to C$ is
the relatively minimal model of $f$, then $f'$ is $e$-Frobenius split if and only
if $f$ is $e$-Frobenius split.

Recall that a variety $Z$ is $e$-Frobenius split if the map $\ms O_Z\to
F^e_{Z*}\ms O_Z$ is split.
\begin{prop}
	Let $f:X\to C$ be an elliptic surface and $e$ a positive integer.
	\begin{enumerate}
		\item If $X$ is $e$-Frobenius split, then $f$ is $e$-Frobenius split.
		\item If $f$ is $e$-Frobenius split, then $C$ is $e$-Frobenius split.
	\end{enumerate}
\end{prop}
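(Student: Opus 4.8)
The plan is to deduce both implications from the functoriality of Frobenius splitting under the maps appearing in diagram \eqref{eq:the situation room}, combined with the observation already recorded in the excerpt that \eqref{eq:cohomap11} is unaffected by passing to relative minimal models. For part (1), suppose $X$ is $e$-Frobenius split, so that $\ms O_X\to F^e_{X*}\ms O_X$ admits an $\ms O_X$-linear retraction. Pushing forward along $f$ and using the commutativity of Frobenius with $f$, I would identify $f_*F^e_{X*}\ms O_X$ with $F^e_{C*}f^{(e)}_*\ms O_{X^{(e)}}$ and note that the composite $\ms O_C\to F^e_{C*}\ms O_C\to F^e_{C*}f^{(e)}_*\ms O_{X^{(e)}}$ factors the relevant map; more usefully, I would apply $\R^1 f_*$ (equivalently, work with the relative dualizing sheaves, since on an elliptic surface $\R^1 f_*\ms O_X$ is dual to $f_*\omega_{X/C}$). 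The cleanest route: the splitting of $\ms O_X\to F^e_{X*}\ms O_X$ induces, by the projection formula and relative duality, a splitting of $\R^1 f_*\ms O_X\to \R^1 f_*(F^e_{X*}\ms O_X)=F^e_{C*}\R^1 f^{(e)}_*\ms O_{X^{(e)}}$, and then one composes with the natural map $F^e_{C*}\R^1 f^{(e)}_*\ms O_{X^{(e)}}\to F^e_{C*}\R^1(\varphi_e)_*\ms O_{Y_e}$ coming from the resolution $g_e$. Here one must check this last map is itself split (or at least that the composite $\R^1 f_*\ms O_X\to F^e_{C*}\R^1(\varphi_e)_*\ms O_{Y_e}$ remains split); since $g_e$ is a minimal resolution of rational double points, $\R^1 g_{e*}\ms O_{Y_e}=0$, so $\R^1(\varphi_e)_*\ms O_{Y_e}=\R^1 f^{(e)}_*(g_{e*}\ms O_{Y_e})=\R^1 f^{(e)}_*\ms O_{X^{(e)}}$ away from the finitely many bad points, and a direct local check handles the rest — this is where I expect to lean on the normality and Cohen--Macaulay-ness of $X^{(e)}$.

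For part (2), the argument runs in the reverse direction and is essentially the second projection of the same diagram. Suppose $f$ is $e$-Frobenius split, i.e.\ \eqref{eq:cohomap11} admits a retraction $r\colon F^e_*\R^1(\varphi_e)_*\ms O_{Y_e}\to \R^1 f_*\ms O_X$. I would push forward to a point (take global sections, or rather apply the Leray spectral sequence for $f$) to relate this to the splitting of $\ms O_C\to F^e_{C*}\ms O_C$. Concretely, the map $\ms O_C\to F^e_{C*}\ms O_C$ sits inside \eqref{eq:cohomap11} after twisting appropriately: since $\R^1\varphi_{e*}\ms O_{Y_e}$ contains $\ms O_{X^{(e)}}$-cohomology in a way that restricts over the generic fiber to the $H^1$ of a genus one curve, the trace/residue structure lets one extract from $r$ a retraction of $\ms O_C\to F^e_{C*}\ms O_C$. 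The most transparent formulation: tensoring \eqref{eq:cohomap11} with $(\R^1 f_*\ms O_X)^{\vee}=\mathbb L$ and using that $\mathbb L$ has nonnegative degree on an elliptic surface over $C$ with $C$ of genus $\le 1$ (more precisely that $\R^1 f_*\ms O_X$ injects into the structure sheaf component, by the classification of $R^1f_*\ms O_X$ in \cite[Section II.4]{MR1078016}), gives a splitting of $\ms O_C\to F^e_{C*}\ms O_C$ as a direct summand of the split map \eqref{eq:cohomap11}$\otimes\mathbb L$.

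The main obstacle, and the place where care is genuinely required, is controlling the resolution map $g_e\colon Y_e\to X^{(e)}$ in part (1): the pullback $X^{(e)}$ of $X$ along $F^e\colon C\to C$ will in general be singular (it acquires rational double points along the fibers of additive reduction), and one needs that the splitting of $\ms O_X\to F^e_{X*}\ms O_X$ survives both the pullback to $X^{(e)}$ and the passage to the minimal resolution. The pullback step is standard — Frobenius splitting is stable under the operations of taking $F^e_{C*}$ and pushing along the finite flat base change $W^e$ — but the resolution step requires knowing $\R^i g_{e*}\ms O_{Y_e}=0$ for $i>0$, i.e.\ that $X^{(e)}$ has rational singularities, which is exactly the statement that its singularities are rational double points (true in characteristic $p\ge 5$ for the $A$--$D$--$E$ configurations arising from Kodaira fibers). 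Granting that, $\R^1(\varphi_e)_*\ms O_{Y_e}\cong \R^1 f^{(e)}_*\ms O_{X^{(e)}}$ and the whole argument reduces to base-change compatibility of Frobenius splittings, which is formal.
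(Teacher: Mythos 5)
Your part (2) is essentially the paper's argument and is fine: by construction \eqref{eq:cohomap11} factors through the flat base change map $\R^1f_*\ms O_X\to F^e_*\R^1f^{(e)}_*\ms O_{X^{(e)}}\cong \R^1f_*\ms O_X\otimes F^e_*\ms O_C$, so a retraction of \eqref{eq:cohomap11}, precomposed with the second arrow in that factorization, retracts the base change map, and tensoring with $(\R^1f_*\ms O_X)^{\vee}$ then splits $\ms O_C\to F^e_*\ms O_C$. The appeals to the degree of $\mathbb L$, to the genus of $C$, and to trace/residue structure are unnecessary (and no genus hypothesis is available); the factorization is formal.

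Part (1), however, has a genuine gap, located exactly where you flag that care is needed. First, the identification $\R^1 f_*(F^e_{X*}\ms O_X)=F^e_{C*}\R^1 f^{(e)}_*\ms O_{X^{(e)}}$ is false: the left-hand side is $F^e_{C*}\R^1f_*\ms O_X$, while the right-hand side is $F^e_{C*}(F^e)^*\R^1f_*\ms O_X\cong\R^1f_*\ms O_X\otimes F^e_{C*}\ms O_C$, and these differ by a twist (for a K3 they are $F^e_*\ms O(-2)$ and $F^e_*\ms O(-2p^e)$, respectively). Second, and decisively, the claim that $X^{(e)}$ has only rational double points, so that $\R^1g_{e*}\ms O_{Y_e}=0$ and $\R^1f^{(e)}_*\ms O_{X^{(e)}}\to\R^1(\varphi_e)_*\ms O_{Y_e}$ is an isomorphism, fails at every additive fiber once $p^e\geq 7$: at a fiber of type $\II$, for instance, $X^{(e)}$ is locally $y^2=x^3+u\,s^{p^e}$ with $u$ a unit, which is not a rational singularity (only the nodes of semistable fibers give $A_{np^e-1}$ points). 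This failure is the entire subject of the paper: Lemma \ref{lem:frobenius pullback has negative thing} computes the resulting discrepancy $\frac{p^{2e}-1}{12}\FT(f)$, and Example \ref{ex:shioda example3} and Proposition \ref{prop:isotrivial pi} give Jacobian elliptic K3 surfaces that are not $e$-Frobenius split for $e\geq 2$. If your rational-singularity claim were true, the argument of Example \ref{ex:semistable implies split} would show that every elliptic fibration over $\P^1$ is $\infty$-Frobenius split, with no hypothesis on $X$ at all; note that, after the misidentification above, your argument never uses the Frobenius splitting of $X$ in any essential way, which is a sign that it proves too much. Normality and Cohen--Macaulayness of $X^{(e)}$ cannot rescue the "direct local check," because the statement you would need --- that the composite \eqref{eq:cohomap11} is split whenever its first factor is --- is simply false in general.

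What is missing is a comparison map out of $F^e_*\R^1(\varphi_e)_*\ms O_{Y_e}$ back to the cohomology of $X$. The paper produces a map $F^e_*\R^1(\varphi_e)_*\ms O_{Y_e}\to\R^1f_*F^e_{X*}\ms O_X$ from a lift $h\colon X\to Y_e$ of the $e$th absolute Frobenius of $X$ (invoking minimality of the resolution), so that in the diagram \eqref{eq:diagram!!!} the map \eqref{eq:cohomap11} becomes an initial factor of $\R^1f_*\ms O_X\to\R^1f_*F^e_{X*}\ms O_X$; the hypothesis that $X$ is $e$-Frobenius split splits the latter map, hence the former. Your proposal contains no counterpart of this map, and the rational-singularity shortcut cannot substitute for it.
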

\begin{proof}
	The $e$th absolute Frobenius $F_X^e\colon X\to X$ factors through $X^{(e)}$.
	By minimality of $Y_e$, it also factors through the map $g_e$, giving a map
	$h:X\to Y_e$, and hence a map of sheaves $\ms O_{Y_e}\to h_*\ms O_X$. Applying
	$F^e_*\R^1(\varphi_e)_*$, we find a map
	$$
	F^e_*\R^1(\varphi_e)_*\ms O_{Y_e}\to F^e_*\R^1(\varphi_e)_*h_*\ms O_X\to\R^1f_*F^e_{X*}\ms O_X
	$$
	where the second map is induced by the appropriate Leray spectral sequence. We
	obtain a diagram
	\begin{equation}\label{eq:diagram!!!}
	\begin{tikzcd}
	\R^1f_*\ms O_X\arrow{d}\arrow{dr}\arrow[bend left=10]{drr}&&\\
	F^e_*\R^1f^{(e)}_*\ms O_{X^{(e)}}\arrow{r}&F^e_*\R^1(\varphi_e)_*\ms
	O_{Y_e}\arrow{r}&\R^1f_*F^e_{X*}\ms O_X
	\end{tikzcd}
	\end{equation}
	Note that the vertical arrow is identified with the canonical map $\ms O_C\to
	F^e_*\ms O_C$ tensored with the invertible sheaf $\R^1 f_*\ms O_X$. Now, a
	splitting of $\ms O_X\to F^e_*\ms O_X$ gives a splitting of the long diagonal
	arrow of \eqref{eq:diagram!!!}, which induces a splitting of the diagonal
	arrow \eqref{eq:cohomap11}, and a splitting of \eqref{eq:cohomap11} induces a
	splitting of the vertical arrow. 
\end{proof}

\begin{remark}
	Neither the converse of (i) nor the converse of (ii) holds (see Examples
	\ref{ex:shioda example1} and \ref{ex:shioda example3}). Indeed, we will show
	that if $X$ is a very general supersingular K3 surface of Artin invariant
	$\leq 9$, then every Jacobian elliptic fibration on $X$ is $e$-Frobenius split
	for all $e$. However, a supersingular K3 surface is never $e$-Frobenius split for any $e$.
\end{remark}

\begin{example}\label{ex:semistable implies split}
	Suppose $f:X\to C$ is an elliptic surface that is semistable (that is, $f$ has
	no additive fibers). This implies, with the notation of the diagram
	(\ref{eq:the situation room}), that the Frobenius pullback $X^{(e)}$ has only
	rational singularities. By construction, the map (\ref{eq:cohomap11}) always
	factors as
	\begin{equation}\label{eq:composition of some things}
	\R^1f_*\ms O_X\to F^e_*\R^1 f^{(e)}_*\ms O_{X^{(e)}}\xrightarrow{\sim} F^e_*\R^1 f^{(e)}_* g_{e*}\ms O_{Y_e}\to F^e_*\R^1(\varphi_{e})_*\ms O_{Y_e}
	\end{equation}
	As $X^{(e)}$ has rational singularities, the map $\R^1 f^{(e)}_* g_{e*}\ms
	O_{Y_e}\to \R^1(\varphi_{e})_*\ms O_{Y_e}$ is an isomorphism, and hence the
	right most arrow of (\ref{eq:composition of some things}) is an isomorphism.
	But the map
	$$
	\R^1f_*\ms O_X\to F^e_*\R^1 f^{(e)}_*\ms O_{X^{(e)}}
	$$
	is identified with the map
	$$
	\R^1f_*f^*(\ms O_C)\to \R^1f_*f^*(F^e_*\ms O_C)
	$$
	We conclude that a semistable elliptic surface over an $e$-Frobenius split
	curve is itself $e$-Frobenius split. In particular, a semistable elliptic K3
	surface is $\infty$-Frobenius split. We will strengthen this observation in
	Proposition \ref{prop:cohoinj2}.
\end{example}

\begin{lem}\label{lem:level lowering}
	If $f\colon X\to C$ is $e$-Frobenius split for some $e\geq1$, then $f$ is also
	$e'$-Frobenius split for all $1\leq e'\leq e$.
\end{lem}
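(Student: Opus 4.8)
The plan is to deduce the statement from a multiplicativity property of the construction of \eqref{eq:cohomap11} with respect to composition of Frobenius morphisms. Write $\Phi^f_a\colon\R^1f_*\ms O_X\to F^a_*\R^1(\varphi_a)_*\ms O_{Y_a}$ for the map \eqref{eq:cohomap11} attached to an elliptic fibration $f\colon X\to C$ and an integer $a\geq 0$. Fix $e\geq 1$ and $1\leq e'\leq e$, and put $a=e-e'$, $b=e'$. We first note that $\varphi_b\colon Y_b\to C$ is again an elliptic fibration: it is proper; it is flat by miracle flatness, since $Y_b$ is smooth, $C$ regular, and every fibre of $\varphi_b$ is one-dimensional; and its generic fibre agrees with that of $f^{(b)}$, which is a Frobenius twist of the smooth genus one generic fibre of $f$ and hence smooth (so that $X^{(b)}$ is regular along it and $g_b$ leaves it unchanged). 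We may therefore apply \eqref{eq:the situation room} with $\varphi_b$ in place of $f$ and $a$ in place of $e$: let $Z$ be the minimal resolution of $(Y_b)^{(a)}:=Y_b\times_{C,F^a}C$, with elliptic fibration $\psi\colon Z\to C$, and let $\Phi^{\varphi_b}_a\colon\R^1(\varphi_b)_*\ms O_{Y_b}\to F^a_*\R^1\psi_*\ms O_Z$ be the resulting map.

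The claim is that, under a canonical isomorphism $\iota\colon F^e_*\R^1\psi_*\ms O_Z\xrightarrow{\ \sim\ }F^e_*\R^1(\varphi_e)_*\ms O_{Y_e}$, we have
\begin{equation*}
\Phi^f_e\;=\;\iota\circ\big(F^b_*\Phi^{\varphi_b}_a\big)\circ\Phi^f_b .
\end{equation*}
The isomorphism $\iota$ is obtained as follows: from $F^e=F^a\circ F^b$ and associativity of fibre products we get a morphism $(Y_b)^{(a)}\to(X^{(b)})^{(a)}=X^{(e)}$, which is birational (being the base change of the birational morphism $g_b$ along the power $F^a$ of Frobenius, a universal homeomorphism), so $Z\to(Y_b)^{(a)}\to X^{(e)}$ is a birational morphism from a smooth surface and factors through the minimal resolution $g_e\colon Y_e\to X^{(e)}$; hence $Z$ and $Y_e$ are birational elliptic surfaces over $C$, have the same relative minimal model over $C$, and therefore $\R^1\psi_*\ms O_Z\cong\R^1(\varphi_e)_*\ms O_{Y_e}$ by the invariance of \eqref{eq:cohomap11} under passage to relative minimal models recorded after Definition~\ref{def:frobenius split elliptic fibration}. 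The displayed equality of maps is then a diagram chase: the canonical map $\ms O_C\to F^e_*\ms O_C$ factors through $F^b_*\ms O_C$, and — using that the leftmost arrow of \eqref{eq:composition of some things} is the tensor product of this canonical map with $\R^1f_*\ms O_X$, as in \eqref{eq:diagram!!!} — this factorization propagates through the compatible Frobenius tower $Z\to(Y_b)^{(a)}\to X^{(e)}$ and the Leray edge maps appearing in \eqref{eq:composition of some things}.

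Granting the claim, the Lemma follows immediately: if $r\colon F^e_*\R^1(\varphi_e)_*\ms O_{Y_e}\to\R^1f_*\ms O_X$ splits $\Phi^f_e$, then $s:=r\circ\iota\circ(F^b_*\Phi^{\varphi_b}_a)$ satisfies $s\circ\Phi^f_b=\id$, so $\Phi^f_b=\Phi^f_{e'}$ is split and $f$ is $e'$-Frobenius-split. We expect the main obstacle to be the diagram chase of the second paragraph: every constituent (flat base change, the projection formula, the Leray edge maps, and the universal property of minimal resolutions) is functorial, but matching up the ``all at once via $F^e$'' description of the Frobenius pullback with the ``in two stages, $F^b$ then $F^a$'' description requires careful bookkeeping of these identifications. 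Everything else is formal.
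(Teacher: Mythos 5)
Your argument is correct, and its skeleton is the same as the paper's: factor the comparison map \eqref{eq:cohomap11} at level $e$ through the one at level $e'=b$, then compose the given retraction with the second factor to split the latter. Where you genuinely differ is in how that factorization is produced. The paper obtains it in one line by lifting $Y_e\to X^{(e)}\to X^{(e')}$ through the minimal resolution $g_{e'}\colon Y_{e'}\to X^{(e')}$; since $X^{(e)}\to X^{(e')}$ has degree $p^{e-e'}$, this is a lifting of a \emph{non-birational} morphism through a minimal resolution, which goes beyond the standard universal property (a morphism from a smooth surface to a normal surface need not lift to its minimal resolution in general), though it is of the same kind the paper already invokes for the relative Frobenius in the proposition just before the lemma. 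You instead iterate the construction \eqref{eq:the situation room} for $\varphi_{e'}$ itself, producing the resolution $Z$ of $(Y_{e'})^{(e-e')}$, and then compare $Z$ with $Y_e$: there the morphism $Z\to X^{(e)}$ is proper and birational from a smooth surface, so only the honest birational universal property is needed, and the identification of the two $\R^1$'s is birational invariance. That is what your route buys; the price is the multiplicativity identity you defer, which is indeed only bookkeeping, since every arrow involved is the canonical map attached to a commuting square of a surface over $C$ mapping to $X$ over a power of Frobenius on $C$, and these compose along $Z\to(Y_{e'})^{(e-e')}\to X^{(e)}\to X$, which agrees with $Z\to Y_e\to X^{(e)}\to X$ by construction of the factorization $Z\to Y_e$. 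One streamlining: define your isomorphism $\iota$ directly from the morphism $Z\to Y_e$ (for a proper birational morphism of smooth surfaces the pushforward of the structure sheaf is the structure sheaf and the higher direct image vanishes), rather than via the common relative minimal model; then the final diagram chase is pure functoriality.
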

\begin{proof}
	Using the universal property of the minimal resolution, we find a
	factorization
	$$
	  \R^1f_*\ms O_X\to\R^1(\varphi_{e'})_*\ms O_{Y_{e'}}\to\R^1(\varphi_e)_*\ms O_{Y_e}
	$$
	Thus, a splitting of (\ref{eq:cohomap11}) for $e$ gives rise to a splitting
	for $e'$.
\end{proof}

We record a cohomological consequence of being $e$-Frobenius split. Consider the
semilinear morphism
\begin{equation}\label{eq:cohomap}
\gamma_e(f):\H^2(X,\ms O_X)\to\H^2(Y_e,\ms O_{Y_e})    
\end{equation}
arising from the composition of $g_e^\ast$ with the canonical isomorphism
$$\H^2(Y_e,\ms O_{Y_e})\simto\H^2(X',\ms O_{X'})$$
\begin{lem}\label{lem:frob split}
	$f:X\to\P^1$ is $e$-Frobenius split then the map $\gamma_e(f)$
	(\ref{eq:cohomap}) is injective.
\end{lem}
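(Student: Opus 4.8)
The plan is to transport the sheaf map \eqref{eq:cohomap11} down to $\P^1$ by means of Leray spectral sequences and then invoke the trivial fact that applying $\H^1(\P^1,-)$ to a split monomorphism of $\ms O_{\P^1}$-modules yields a split monomorphism. First I would record three identifications. Each of $f\colon X\to\P^1$, $f^{(e)}\colon X^{(e)}\to\P^1$, and $\varphi_e\colon Y_e\to\P^1$ has one-dimensional fibres, so $\R^q(-)_\ast\ms O$ vanishes for $q\geq 2$; each also satisfies $(-)_\ast\ms O=\ms O_{\P^1}$ (for $f^{(e)}$ this is flat base change from $f_\ast\ms O_X=\ms O_{\P^1}$ along the cartesian square in \eqref{eq:the situation room}; for $f$ and $\varphi_e$ it follows from Stein factorization over the normal base $\P^1$, the generic fibre being a smooth geometrically connected genus one curve — for $\varphi_e$ one notes that $X^{(e)}\to\P^1$, being the base change of $f$, stays smooth over the generic point since smoothness is stable under base change, so $g_e$ is an isomorphism there, and geometric connectedness of the generic fibre is preserved under field extension). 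Since $\H^2(\P^1,\ms O_{\P^1})=0$, the Leray spectral sequences then give canonical isomorphisms $\H^2(X,\ms O_X)\cong\H^1(\P^1,\R^1f_\ast\ms O_X)$, $\H^2(X^{(e)},\ms O_{X^{(e)}})\cong\H^1(\P^1,\R^1f^{(e)}_\ast\ms O_{X^{(e)}})$, and $\H^2(Y_e,\ms O_{Y_e})\cong\H^1(\P^1,\R^1(\varphi_e)_\ast\ms O_{Y_e})$; and since $F^e$ is finite, $\H^1(\P^1,F^e_\ast\R^1(\varphi_e)_\ast\ms O_{Y_e})\cong\H^1(\P^1,\R^1(\varphi_e)_\ast\ms O_{Y_e})$. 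So the source and target of $\gamma_e(f)$ get identified with $\H^1(\P^1,-)$ of the source and target of \eqref{eq:cohomap11}.

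Next I would check that, under these identifications, $\gamma_e(f)$ is precisely the map induced by \eqref{eq:cohomap11} on $\H^1(\P^1,-)$. For this I would use the factorization of \eqref{eq:cohomap11} recorded in Example \ref{ex:semistable implies split}: its first arrow $\R^1f_\ast\ms O_X\to F^e_\ast\R^1f^{(e)}_\ast\ms O_{X^{(e)}}$ is identified there with $\R^1f_\ast f^\ast\ms O_{\P^1}\to\R^1f_\ast f^\ast(F^e_\ast\ms O_{\P^1})$, so flat base change along \eqref{eq:the situation room} together with the projection formula identifies the map it induces on $\H^1(\P^1,-)$ with the Frobenius pullback $(W^e)^\ast\colon\H^2(X,\ms O_X)\to\H^2(X^{(e)},\ms O_{X^{(e)}})$; the remaining arrows, coming from $\ms O_{X^{(e)}}\to g_{e\ast}\ms O_{Y_e}$ and the Leray filtration for $\varphi_e=f^{(e)}\circ g_e$, induce $g_e^\ast$ on $\H^2$ by functoriality. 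Composing recovers $\gamma_e(f)$ as defined above.

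The conclusion is then formal: by Definition \ref{def:frobenius split elliptic fibration}, $e$-Frobenius splitness means \eqref{eq:cohomap11} admits an $\ms O_{\P^1}$-linear retraction, and applying $\H^1(\P^1,-)$ and transporting through the identifications above produces a retraction of $\gamma_e(f)$; a map admitting a retraction is injective. The hard part will be the middle step — matching the map cohomologically induced by \eqref{eq:cohomap11} with the geometrically defined $\gamma_e(f)$ — but this is only a diagram chase invoking the standard compatibilities of the Leray spectral sequence with flat base change and with pullback along $g_e$; the vanishing inputs and the ``split stays split'' observation are routine.
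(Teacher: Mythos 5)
Your proposal is correct and follows essentially the same route as the paper: identify source and target of $\gamma_e(f)$ with $\H^1(\P^1,-)$ of the two sheaves in \eqref{eq:cohomap11} via the Leray spectral sequence, check the resulting square commutes, and conclude because a split injection of sheaves stays injective after taking cohomology. The paper's proof is just a terser version of this, asserting the commutative diagram directly, so the only difference is that you spell out the identifications and the compatibility check in more detail.
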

\begin{proof}
	We have a commutative diagram
	$$
	\begin{tikzcd}
	\H^1(C,\R^1f_*\ms O_X)\arrow{r}\arrow{d}&\H^1(C,\R^1(\varphi_{e})_*\ms O_{Y_{e}})\arrow{d}\\
	\H^2(X,\ms O_X)\arrow{r}{\gamma_{e}(f)}&\H^2(Y_{e},\ms O_{Y_{e}})
	\end{tikzcd}
	$$
	where the vertical arrows are the isomorphisms induced by the Leray spectral
	sequence, and the upper horizontal arrow is the map on $\H^1$ induced by the
	map (\ref{eq:cohomap11}). A split injection of sheaves induces an injection on
	cohomology, which gives the result.
\end{proof}

\subsection{Weierstrass fibrations and Tate's algorithm}

We briefly recall how to determine the fiber
type given the (local) Weierstrass equation for an elliptic fibration. Suppose
$R$ is a dvr with uniformizer $t$, fraction field $K$, and algebraically closed
residue field $\kappa$ of characteristic at least $5$. Given an elliptic curve $X_K$
over $K$, there is a minimal Weierstrass equation
\begin{equation}\label{eq:minimal weierstrass equation}
    y^2=x^3+gt^\alpha x+ht^\beta,
\end{equation}
where $g$ and $h$ are units of $R$ and
$\alpha,\beta\in\N\cup\{\infty\}$, with the convention that $t^\infty=0$. Minimality is equivalent to the assertion that $\alpha<4$ or $\beta<6$ (or both). The discriminant of the curve
is given by the formula 
\[
    \Delta=4g^3t^{3\alpha}+27h^2t^{2\beta}
\]
We let $\delta$ denote the $t$-adic valuation of $\Delta$.

Tate's algorithm \cite{MR0393039} implies that the quantities $\alpha,\beta,$ and $\delta$ determine the Kodaira type $\Phi$ of the special fiber. We record the relations between the basic data $\alpha,\beta,\delta$ and the Kodaira type $\Phi$ in the first three rows of Table \ref{table:1}. In particular, we highlight the following consequences:
\begin{enumerate}
    \item the special fiber is smooth if and only if $\delta=0$;
    \item the special fiber is semistable if and only if either $\alpha=0$ or $\beta=0$ and is additive otherwise;
	\item if the fiber is additive, then unless $\alpha=2$ and $\beta=3$, the fiber type $\Phi$ is determined by $\delta$;
	\item if $\alpha=2$ and $\beta=3$, the fiber has type $\I_n^\ast$ where	$n=\delta-6$.
\end{enumerate}

%If we assume a certain value $\Phi$ for the Kodaira type of the special fiber, Tate's algorithm gives restrictions on the possible values of the basic data $\alpha,\beta,\delta$. 

\renewcommand{\arraystretch}{1.1}

\begin{table}[h]
  \centering
  \begin{tabular}{|c||c|c|c|c|c|c|c|c|c|c|}
    \hline
    $\Phi$ & $\I_0$ & $\I_{n\,\,(\substack{n\geq 1})}$ & $\II$ & $\III$ & $\IV$ & $\I_0^\ast$ & $\I^\ast_{n\,\,(\substack{n\geq 1})}$ & $\IV^\ast$ & $\III^\ast$ &
    $\II^\ast$ \\
    \hline
    $\alpha$  & $\geq 0$ & $0$ & $\geq 1$ & $1$ & $\geq 2$ & $\geq 2$ & $2$ & $\geq 3$ & $3$ & $\geq 4$ \\
    \hline
    $\beta$   & $\geq 0$ & $0$ & $1$ & $\geq 2$ & $2$ & $\geq 3$ & $3$ & $4$ & $\geq 5$ & $5$ \\
    \hline
    $\delta$  & $0$ & $n$ & $2$ & $3$ & $4$ & $6$ & $n+6$ & $8$ & $9$ & $10$ \\
    \hline
    $\alpha(\Phi)$ & $0$ & $0$ & $1$ & $1$ & $2$ & $2$ & $2$  & $3$ & $3$ & $4$ \\
    \hline
    $\beta(\Phi)$  & $0$ & $0$ & $1$ & $2$ & $2$ & $3$ & $3$ & $4$ & $5$ & $5$ \\
    \hline
    $\delta(\Phi)$ & $0$ & $0$ & $2$ & $3$ & $4$ & $6$ & $6$ & $8$ & $9$ & $10$ \\
    \hline
    $\zeta(\Phi)$ & $0$ & $0$ & $1$ & $2$ & $3$ & $4$ & $4$ & $6$ & $7$ & $8$ \\
    \hline
    \end{tabular}
  \vspace{1ex}
  \caption{Basic data associated to additive fiber types}
  \label{table:1}
\end{table}

For instance, if the special fiber of the minimal Weierstrass equation above has Kodaira fiber type $\Phi=\III$, then according to Table \ref{table:1} we must have $\alpha=1$, $\beta\geq 2$, and $\delta=3$. We also introduce the quantities $\alpha(\Phi),\beta(\Phi),\delta(\Phi),$ and $\zeta(\Phi)$, defined according to the fourth through seventh rows of Table \ref{table:1}. These are respectively the minimum possible values of the quantities $\alpha$, $\beta$, and $\delta$ among all fibrations with a fiber of type $\Phi$. In particular, these depend only on the fiber type $\Phi$, and not on the specifics of a minimal Weierstrass equation having reduction type $\Phi$.  % (here we do not distinguish between types $\I_n$ or $\I_n^*$ for $n=0$ and $n\geq 1$), 
For the additive fiber types we have
\[
    \zeta(\Phi)=\alpha(\Phi)+\beta(\Phi)-1.
\]
Furthermore, we observe that if $\Phi$ is the Kodaira type of the special fiber of the fibration with the above Weierstrass equation~\eqref{eq:minimal weierstrass equation}, then we have
\begin{equation}\label{eq:delta relation}
    \min\{3\alpha, 2\beta\}=\delta(\Phi)
\end{equation}
In particular, the left side of~\eqref{eq:delta relation} depends only on the Kodaira type $\Phi$. (Note that we use $\alpha$ and $\beta$, not $\alpha(\Phi)$ and $\beta(\Phi)$, so this could \emph{a priori\/} depend upon more than just the fiber type $\Phi$.)

Given a list of Kodaira fiber types $\bPhi$ (possibly with multiplicities), we let $\alpha(\bPhi)$ denote the sum of the $\alpha(\Phi_i)$ where $\Phi_i$ ranges over the elements of $\bPhi$ (with multiplicity). We similarly define $\beta(\bPhi),\delta(\bPhi),$ and $\zeta(\bPhi)$.

%The fourth and fifth rows of Table \ref{table:1} show the values of $\alpha$ and $\beta$ needed to ensure the desired valuation of $\Delta$. An entry of ``$\geq i$'' indicates that the value in question must be at least equal to $i$, possibly including $\infty$.

\subsection{A criterion for Frobenius splitting}
We now consider a Jacobian elliptic surface $f:X\to\P^1$. Using our computations with
divisors in $\P^1$ of Section \ref{sec:frob-split}, we will derive explicit
conditions for $f$ to be Frobenius split. These conditions will be easily
computable in practice, given a Weierstrass equation for $f$. In particular,
we will see that the Frobenius splitting of $f$ is controlled by its additive
singular fibers. For technical reasons, we will focus on the question of when
$f$ is $2e$-Frobenius split. A similar analysis is possible for the odd
iterates, with a few modifications arising from the fact that fiber types can
change under pullback by odd powers of Frobenius. Given this, the analysis is
substantially the same.

% %Fix a point $x\in\P^1$, let $R$ be the local ring of $\P^1$ at $x$, let $t\in R$ be a uniformizer, and let $K$ be the field of fractions of $R$. Consider the minimal Weierstrass equation
% 	\begin{equation}\label{eq:minimal weierstrass equation first appearence}
%         y^2=x^3+gt^\alpha x+ht^\beta
%     \end{equation}
% for the elliptic curve $X_K$, where $g$ and $h$ are units in $R$ and $\alpha,\beta\in\N\cup\{\infty\}$, with the convention that $t^\infty=0$. Minimality is equivalent to the condition that either $\alpha<4$ or $\beta<6$ (or both). The discriminant $\Delta$ of $X_K$ is given by the formula 
%     \[
%         \Delta=4g^3t^{3\alpha}+27h^2t^{2\beta}
%     \]
% Let $\delta$ be the valuation of $\Delta$, and let $m=\min(3\alpha,2\beta)$.
% \begin{lem}
%     The quantity $m$ is equal to $\delta$ if the fiber of $X$ over $x$ does not have Kodaira type $\I_n^*$, and is equal to $6$ if the Kodaira type is $\I_n^*$. In particular, $m$ depends only on the Kodaira fiber type, and not on the local behavior of $X$ near $x$, and $m=0$ if the fiber is smooth.
% \end{lem}
% \begin{proof}
%     This is a consequence of Tate's algorithm, which determines the fiber type of a minimal Weierstrass equation from the data $\alpha,\beta,$ and $\delta$. The resulting restrictions are recorded in Table \ref{table:1} of Section \ref{sec:fiber moduli}.
% \end{proof}

Given a point $x\in\P^1$, we let $\Phi_x$ denote the Kodaira fiber type of the fiber of $X$ over $x$.
\begin{defn}\label{def:Delta +}
    We define a divisor $\FT(f)$ on $\P^1$ by
    \[
        \FT(f):=\sum_{x\in \P^1} \delta(\Phi_x) x.
    \]
\end{defn}
  
%Let $A\subset\P^1(k)$ be the set of points over which the fiber of $f$ is additive. For $a\in A$, let $\Phi_a$ denote the Kodaira fiber type of the fiber at $a$, and let $\delta(\Phi_a)$ denote the valuation of the discriminant of $f$ at $a$ if the fiber type is not $\I_n^\ast$ and $6$ if the fiber is of type $\I_n^\ast$. (The Kodaira fiber types $\Phi_a$ and the values of the $\delta(\Phi_a)$ are recorded in Table \ref{table:1} of Section \ref{sec:fiber moduli}.)

The significance of this divisor is in the following result.

\begin{lem}\label{lem:frobenius pullback has negative thing}
	For each $e>0$, there is a canonical isomorphism
	$$\iota_{2e}:\R^1(\phi_{2e})_\ast\ms
	O_{Y_{2e}}\simto((F^{2e})^\ast\R^1f_\ast\ms
	O_X)\left(\frac{p^{2e}-1}{12}\FT(f)\right).$$
\end{lem}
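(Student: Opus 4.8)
The plan is to compute $\R^1(\phi_{2e})_\ast\ms O_{Y_{2e}}$ locally on $C=\P^1$ by analyzing, one singular fiber at a time, the difference between the minimal resolution $Y_{2e}$ of the Frobenius pullback $X^{(2e)}$ and the naive pullback. First I would reduce to the case where $f$ is already relatively minimal and has a Weierstrass model (this is harmless for both sides, as already noted after Definition~\ref{def:frobenius split elliptic fibration}). Over the semistable locus the square in \eqref{eq:the situation room} produces only rational singularities, so $\R^1 f^{(2e)}_\ast g_{2e\ast}\ms O_{Y_{2e}}\xrightarrow{\sim}\R^1(\phi_{2e})_\ast\ms O_{Y_{2e}}$ and by flat base change this is just $(F^{2e})^\ast\R^1 f_\ast\ms O_X$; so the correction divisor is supported on $A$, the additive locus. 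The content of the lemma is therefore entirely local at each $a\in A$.

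Next, at a point $a\in A$ I would work on the Weierstrass model $y^2=x^3+A(t)x+B(t)$ in an analytic or henselian neighborhood of $a$, with $n=v_a(\Delta)$ the discriminant valuation (or the truncated value $6$ in the $\I_n^\ast$ case, by Tate's algorithm the relevant quantity for $\R^1f_\ast\ms O_X$ being $\lfloor n/12\rfloor$ but here we are keeping track of it pre-resolution). Pulling back by $t\mapsto t^{p^{2e}}$ multiplies the discriminant valuation by $p^{2e}$, and the key input is that minimal resolution of the pulled-back Weierstrass surface changes $\R^1$ of the structure sheaf by exactly a twist by $\frac{p^{2e}-1}{12}v_a$ at $a$. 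Concretely, $\R^1(\phi_{2e})_\ast\ms O_{Y_{2e}}$ is the dual of the fundamental line bundle $\mathbb L_{2e}$ of the minimal elliptic surface $Y_{2e}\to C$, and the fundamental line bundle of a minimal elliptic surface is $\ms O(\sum_a \lceil n_a/12\rceil \cdot a)^{-1}$-type data; the comparison of $\deg\mathbb L$ before and after Frobenius pullback, localized, gives the coefficient $(p^{2e}n_a)/12 - (\text{resolved contribution})$, which after using $p^{2e}\equiv 1\pmod{12}$ collapses to $\frac{p^{2e}-1}{12}n_a$. The $\I_n^\ast$ exception (coefficient $6$ rather than $v_a(\Delta)=6+n$) arises because for $\I_n^\ast$ fibers the $\I_n$ part is semistable-like and contributes nothing extra to the fundamental line bundle twist, only the $\ast$ (i.e. the type $\I_0^\ast$ part) matters, hence the value $6$.

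The canonicity of $\iota_{2e}$ I would get by assembling these local isomorphisms: away from $A$ it is the flat-base-change isomorphism, and at each $a\in A$ the local comparison is canonical because the minimal resolution is canonical, so the two agree on overlaps and glue. Alternatively, and more cleanly, one can phrase it as: $\R^1(\phi_{2e})_\ast\ms O_{Y_{2e}} = (\mathbb L_{Y_{2e}})^\vee$ and $(F^{2e})^\ast\R^1 f_\ast\ms O_X = (F^{2e})^\ast(\mathbb L_X^\vee) = ((F^{2e})^\ast\mathbb L_X)^\vee$, so the claim is equivalent to $\mathbb L_{Y_{2e}} = (F^{2e})^\ast\mathbb L_X \otimes \ms O\!\left(-\tfrac{p^{2e}-1}{12}\FT(f)\right)$, an identity of line bundles on $C$ that can be checked degree-by-degree locally and then lifted to an isomorphism since $C=\P^1$ has no nontrivial automorphisms of line bundles beyond scalars — but to make $\iota$ genuinely canonical one fixes it via the local resolution data rather than choosing a scalar.

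The main obstacle I expect is the local analysis at the additive fibers, specifically getting the exact contribution of the minimal resolution of the $p^{2e}$-th power Weierstrass pullback to $\R^1$ of the structure sheaf, including the bookkeeping for all Kodaira types (II through $\IV^\ast$ and $\I_n^\ast$) and verifying that the $\I_n^\ast$ family really does contribute $6$ independent of $n$. This requires either a careful Tate-algorithm computation of how each fiber type transforms under the purely inseparable base change $t\mapsto t^{p^{2e}}$ and what the resulting minimal model looks like, or an appeal to the behavior of the fundamental line bundle / the formula $\deg\mathbb L = \tfrac{1}{12}\sum_a n_a$ together with care about which $n_a$ to use post-pullback; the congruence $p^{2e}\equiv 1\pmod{12}$ for $p>3$ is what makes the fractional coefficient $\tfrac{p^{2e}-1}{12}$ an integer and is the reason the statement is restricted to even iterates.
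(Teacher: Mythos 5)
Your proposal is correct and follows essentially the same route as the paper: reduce to the additive fibers via flat base change and rational singularities over the semistable locus, then locally count the number of divisions by $t^{12}$ (runs of Tate's algorithm) needed to minimalize the pulled-back Weierstrass equation, using $p^{2e}\equiv 1\pmod{12}$ and treating $\I_n^\ast$ fibers separately (whence the value $6$). The one input you flag as the ``main obstacle'' --- that each such minimalization step twists $\R^1$ of the structure sheaf at that point by exactly one --- is exactly what the paper supplies by citing Schr\"oer's analysis of the Tate algorithm, so your plan matches the paper's proof in substance.
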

\begin{proof}
	%For the sake of notational simplicity, we will prove this for $X$ and $Y=Y_2$ (i.e., $e=2$). The general case follows by iteration.
	
% 	The basechange map gives a natural injection
% 	$$\beta:F^{2\ast}\R^1f_\ast\ms O_X\to\R^1\phi_\ast\ms O_Y,$$ from which we
% 	conclude that there is an expression
% 	$$\R^1\phi_\ast\ms O_Y\cong F^{2\ast}\R^1f_\ast\ms O_X(c)$$ for some $c$. More
% 	precisely, the basechange map is an isomorphism over the locus of semistable
% 	fibers (since the singularities are rational). To find $c$, we can work
% 	locally and describe the length of the cokernel of $\beta$ around each
% 	additive fiber. (This also produces an explicit divisor, which is stronger
% 	than simply knowing that the latter sheaf is a twist.)
	The basechange map gives a natural injection
	$$\psi:(F^{2e})^\ast\R^1f_\ast\ms O_X\to\R^1\phi_{2e,\ast}\ms O_{Y_{2e}},$$ from which we
	conclude that there is an expression
	$$\R^1\phi_{2e,\ast}\ms O_{Y_{2e}}\cong (F^{2e})^{\ast}\R^1f_\ast\ms O_X(c)$$ for some $c$. More precisely, the basechange map is an isomorphism over the locus of smooth fibers (in fact, we will see that it is an isomorphism also over the locus of semistable
	fibers, because the corresponding singularities are rational). To find $c$, we can work
	locally and describe the length of the cokernel of $\psi$ around each
	additive fiber. (This also produces an explicit divisor, which is stronger
	than simply knowing that the latter sheaf is a twist.)
	
	Let $x\in\P^1$ be a point supporting an additive fiber of $X$. Let $R$ be the local ring of $\P^1$ at $x$, let $t\in R$ be a uniformizer, and let $K$ be the field of fractions of $R$. If the elliptic curve $X_K$ is described by the Weierstrass equation~\eqref{eq:minimal weierstrass equation}, then the pulled back elliptic curve $(X_K)^{(p^{2e}/K)}$ is given by the Weierstrass equation
    \begin{equation}\label{eq:minimal weierstrass equation pulled back}
        y^2=x^3+gt^{p^{2e}\alpha}x+ht^{p^{2e}\beta}
    \end{equation}
    Let $\alpha'=p^{2e}\alpha$ and $\beta'=p^{2e}\beta$ be the exponents of the pulled back Weierstrass equation. This equation may no longer be minimal. That is, we may have $\alpha'\geq 4$ and $\beta'\geq 6$. To make a minimal equation, we repeatedly apply the change of variables
    \[
        x\mapsto t^2x\hspace{1cm}y\mapsto t^3y
    \]
    and divide the resulting equation by $t^6$, which has the effect of lowering $\alpha'$ by $4$ and $\beta'$ by $6$, until at least one of $0\leq\alpha'<4$ and $0\leq\beta'<6$ hold. The number of times we need to apply this change of variables is the unique positive integer $\lambda$ such that at least one of the inequalities
    \[
        0\leq p^{2e}\alpha-4\lambda<4\hspace{1cm}\mbox{and}\hspace{1cm}0\leq p^{2e}\beta-6\lambda<6
    \]
    holds, which is
    \[
        \lambda=\frac{p^{2e}-1}{12}\min\{3\alpha,2\beta\}
    \]
    It follows from Table \ref{table:1} that $\delta(\Phi_x)=\min\{3\alpha,2\beta\}$~\eqref{eq:delta relation}. Moreover, as explained by Schröer \cite[Theorem 10.1]{1703.03081}, this change of variables corresponds geometrically to performing a certain blow up and blow down of the corresponding surface, and each iteration increments the local value of $c$ by $1$. Adding up the local contributions we get the result.
    %we factor out powers of $t^{12}$ until
	%$p^2\delta$ is between $0$ and $11$. Each division by $t^{12}$ corresponds to
	%running the Tate algorithm once.Since $p^2\equiv 1\pmod{12}$, we have the tautological equation
	%$$\delta=p^2\delta-\left(\frac{p^2-1}{12}\delta\right)12,$$ which shows that the Tate
	%algorithm is run $\frac{p^2-1}{12}\delta$ times (around $x$). Applying this for all
	%$x$ supporting additive fibers, we see that 
	%$$c=\frac{p^2-1}{12}\sum \delta_x x=\frac{p^2-1}{12}\FT(f),$$ as claimed.
\end{proof}

The following result gives an explicit criterion for Frobenius splitness of a Jacobian elliptic surface.
\begin{prop}\label{prop:cohoinj}
	Let $f:X\to\P^1$ be a Jacobian elliptic surface.
	\begin{enumerate}
		\item $f$ is $2e$-Frobenius split if and only if the divisor
		$\frac{p^{2e}-1}{12}\FT(f)$ is $2e$-Frobenius split.
		\item $f$ is $\infty$-Frobenius split if and only if the divisor
		$\frac{p^{2e}-1}{12}\FT(f)$ is $2e$-Frobenius split for all $e>0$.
	\end{enumerate}
\end{prop}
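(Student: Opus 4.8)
The plan is to deduce the proposition from Lemma~\ref{lem:frob split}, Lemma~\ref{lem:frobenius pullback has negative thing}, and the criterion of Proposition~\ref{prop:frob-split}, by unwinding what it means for the sheaf map \eqref{eq:cohomap11} to split when $f$ is an elliptic K3 surface. Since part (2) is just the conjunction of part (1) over all $e>0$ together with Lemma~\ref{lem:level lowering} (which guarantees that $\infty$-Frobenius-split is equivalent to $2e$-Frobenius-split for all $e$, as the even iterates are cofinal), it suffices to prove (1).

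First I would identify the two line bundles on $\P^1$ occurring in \eqref{eq:cohomap11}. For an elliptic K3 surface $f:X\to\P^1$ we have $\R^1f_\ast\ms O_X\cong\ms O_{\P^1}(-2)$ (the fundamental line bundle $\L$ is $\ms O(2)$ in the K3 case, cf.\ \cite[Section II.4]{MR1078016}). By Lemma~\ref{lem:frobenius pullback has negative thing}, $\R^1(\phi_{2e})_\ast\ms O_{Y_{2e}}\cong (F^{2e})^\ast\ms O(-2)\bigl(\tfrac{p^{2e}-1}{12}\FT(f)\bigr)$. Now $\deg\FT(f)=24$ for an elliptic K3 (the total discriminant degree, using the convention $v_a=6$ at $\I_n^\ast$ fibers, which is exactly the degree relevant to the minimal model), so $(F^{2e})^\ast\ms O(-2)$ has degree $-2p^{2e}$ and the twist has degree $-2p^{2e}+\tfrac{p^{2e}-1}{12}\cdot 24=-2p^{2e}+2p^{2e}-2=-2$. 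Thus after twisting \eqref{eq:cohomap11} by $\ms O(2)$ we obtain a map $\ms O_{\P^1}\to F^{2e}_\ast\bigl(\ms O(2)\otimes(F^{2e})^\ast\ms O(-2)\bigl(\tfrac{p^{2e}-1}{12}\FT(f)\bigr)\bigr)$; by the projection formula and $F^{2e,\ast}\ms O(-2)=\ms O(-2p^{2e})$, the target rearranges to $F^{2e}_\ast\ms O_{\P^1}\bigl(\tfrac{p^{2e}-1}{12}\FT(f)\bigr)$ — precisely the target appearing in Definition~\ref{defn:split} for the divisor $D=\tfrac{p^{2e}-1}{12}\FT(f)$, which lies in $|\ms O(2p^{2e}-2)|$. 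The key point to verify here is that under these identifications the map \eqref{eq:cohomap11} is the adjoint of the section map $(F^{2e})^\ast\ms O_{\P^1}\to\ms O_{\P^1}(D)$ cutting out $D$, so that $f$ is $2e$-Frobenius-split (splitting of \eqref{eq:cohomap11}) if and only if $D$ is $2e$-split in the sense of Definition~\ref{defn:split}. This compatibility follows by tracking the basechange/adjunction construction in Lemma~\ref{lem:frobenius pullback has negative thing}, where $\beta$ is literally the section map up to the identified twist.

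The main obstacle I expect is precisely this last identification — making sure that the abstract isomorphism $\iota_{2e}$ of Lemma~\ref{lem:frobenius pullback has negative thing} carries the canonical sheaf map \eqref{eq:cohomap11} to the divisorial adjoint map of Definition~\ref{defn:split}, rather than to that map composed with some automorphism of $F^{2e}_\ast\ms O(D)$ that could a priori affect splitness. One resolves this by noting that $\iota_{2e}$ is built from the basechange map $\beta$ together with the local Tate-algorithm analysis, and $\beta$ itself \emph{is} the comparison of $\R^1f_\ast\ms O_X$ with its Frobenius pullback, i.e.\ it is the map \eqref{eq:cohomap11} restricted to the semistable locus; since both sheaves are invertible and the complement is a finite set of points, a map between them is determined up to scalar by its behaviour there, and scalars do not affect whether a splitting exists. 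Once this is pinned down, the equivalence of $2e$-Frobenius-splitness of $f$ with $2e$-splitness of $\tfrac{p^{2e}-1}{12}\FT(f)$ is immediate from Definition~\ref{defn:split}, giving (1); and (2) follows by quantifying over $e$.
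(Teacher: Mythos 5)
Your proposal is correct and follows essentially the same route as the paper: identify the base change map \eqref{eq:cohomap11} via Lemma \ref{lem:frobenius pullback has negative thing} and the projection formula with the twist (by the invertible sheaf $\R^1f_\ast\ms O_X\cong\ms O(-2)$) of the canonical map $\ms O_{\P^1}\to F^{2e}_\ast\ms O_{\P^1}\bigl(\tfrac{p^{2e}-1}{12}\FT(f)\bigr)$ attached to the divisor, so that splitness of one is equivalent to splitness of the other, with part (2) following from part (1) and Lemma \ref{lem:level lowering}; your care about the identification being the divisorial adjoint up to scalar is a reasonable way to pin down what the paper asserts directly. One small factual slip, though it is not load-bearing: $\deg\FT(f)$ is not $24$ in general, since $\FT(f)$ omits multiplicative fibers and caps $\I_n^\ast$ contributions at $6$ (e.g.\ it has degree $7$ in Example \ref{ex:shioda example1}), so the divisor need not lie in $|\ms O(2p^{2e}-2)|$; this does not affect the argument, as Definition \ref{defn:split} applies to divisors of any degree.
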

\begin{proof}
  Consider the base change map
	$$\R^1f_\ast\ms O_X\to F^{2e}_\ast\R^1(\phi_{2e})_\ast\ms O_{Y_{2e}}$$ By the
	projection formula and Lemma \ref{lem:frobenius pullback has negative thing},
	this morphism is identified with the twist of the canonical morphism 
	$$\chi:\ms O_{\P^1}\to F^{2e}_\ast\ms
	O_{\P^1}\left(\frac{p^{2e}-1}{12}\FT(f)\right)$$ attached to the 
	divisor $\frac{p^{2e}-1}{12}\FT(f)$ by the invertible sheaf $\R^1f_\ast\ms
	O_X$. We thus see that $f$ is $2e$-Frobenius-split if and only if
	$\frac{p^{2e}-1}{12}\FT(f)$ is $2e$-Frobenius split, which proves (1). Claim
	(2) follows from (1) and Lemma \ref{lem:level lowering}.
\end{proof}

\begin{prop}\label{prop:cohoinj2}
	If $f:X\to\P^1$ is a Jacobian elliptic surface with at most two additive
	fibers, then $f$ is $\infty$-Frobenius split.
\end{prop}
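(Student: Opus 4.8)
The plan is to deduce the statement directly from Proposition~\ref{prop:cohoinj} together with the combinatorics of Proposition~\ref{prop:frob-split}. By Proposition~\ref{prop:cohoinj}(2) it suffices to show, for every $e>0$, that the divisor $D_e:=\frac{p^{2e}-1}{12}\FT(f)$ is $2e$-Frobenius split, i.e.\ $p^{2e}$-split in the sense of Definition~\ref{defn:split}. Two numerical facts about $\FT(f)=\sum_a v_a a$ drive everything: first, $v_a\leq 12$ for every additive fiber (the discriminant valuation of an additive Kodaira fiber other than $\I_n^\ast$ is at most $10$, and $v_a=6$ by convention when the fiber is of type $\I_n^\ast$); and second, $\deg\FT(f)\leq 24$, since $\FT(f)$ is dominated by the discriminant divisor of $f$, which has degree $12\chi(\ms O_X)=24$ because $X$ is a K3 surface. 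Writing $m=p^{2e}$ and $n=\deg D_e=\frac{p^{2e}-1}{12}\deg\FT(f)$, these bounds give $0\leq n\leq 2(p^{2e}-1)<2m$, so $D_e$ always falls into case (1) or case (2) of Proposition~\ref{prop:frob-split}.

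First I would dispose of the case $\deg\FT(f)\leq 12$, which in particular includes every $f$ with at most one additive fiber (and in the semistable case recovers Example~\ref{ex:semistable implies split}): here $n\leq p^{2e}-1<m$, so $D_e$ is $p^{2e}$-split by Proposition~\ref{prop:frob-split}(1) and there is nothing more to do.

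The remaining case is $13\leq\deg\FT(f)\leq 24$, which forces $f$ to have exactly two additive fibers, say at $a_1,a_2\in\P^1(k)$ with weights $v_1,v_2$. Since $m=p^{2e}$ is a power of $p$, being $p^{2e}$-split is \emph{intrinsic}: by the remark following Definition~\ref{defn:split} it is detected by the absolute Frobenius, hence is invariant under automorphisms of $\P^1$. So I may choose the homogeneous coordinates $[s:t]$ so that $a_1=[0:1]$ and $a_2=[1:0]$, in which case the homogeneous form attached to $D_e$ is, up to scalar, the single monomial $s^{c_1}t^{c_2}$ with $c_i=\frac{p^{2e}-1}{12}v_i$ and $c_1+c_2=n$. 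A short computation from $\deg\FT(f)\geq 13$ and $p\geq 5$ shows $n\geq p^{2e}=m$, so we are squarely in case (2) of Proposition~\ref{prop:frob-split}, whose criterion asks whether some monomial of the form is divisible by neither $s^m$ nor $t^m$. The only monomial is $s^{c_1}t^{c_2}$, and $c_i=\frac{p^{2e}-1}{12}v_i\leq p^{2e}-1<m$ precisely because $v_i\leq 12$; hence it is divisible by neither $s^m$ nor $t^m$, so $D_e$ is $p^{2e}$-split. Since $e$ was arbitrary, Proposition~\ref{prop:cohoinj}(2) shows that $f$ is $\infty$-Frobenius split.

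The only point needing care --- the main obstacle, such as it is --- is that the two inequalities $v_a\leq 12$ and $\deg\FT(f)\leq 24$ are \emph{exactly} what the argument consumes: the first keeps both exponents $c_1,c_2$ strictly below $m$, so that the monomial criterion of Proposition~\ref{prop:frob-split}(2) is satisfied, while the second keeps $n<2m$, so that Proposition~\ref{prop:frob-split} applies at all; and the case division $\deg\FT(f)\leq 12$ versus $\deg\FT(f)\geq 13$ matches precisely the threshold $n<m$ versus $n\geq m$ between the two parts of that proposition. One should also double-check that the convention $v_a=6$ for fibers of type $\I_n^\ast$ is the one in force in Proposition~\ref{prop:cohoinj} and Lemma~\ref{lem:frobenius pullback has negative thing}, which it is.
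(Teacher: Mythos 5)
Your argument is correct and follows essentially the same route as the paper: reduce via Proposition~\ref{prop:cohoinj}(2) to the $p^{2e}$-splitness of $\frac{p^{2e}-1}{12}\FT(f)$, move the (at most two) additive fibers to $0$ and $\infty$ so the associated form is a single monomial $s^{c_1}t^{c_2}$, and observe that $c_i=\frac{p^{2e}-1}{12}v_i<p^{2e}$ so Proposition~\ref{prop:frob-split} applies. Your version is only slightly more explicit than the paper's (checking $n<2m$ and separating the cases $n<m$ and $m\leq n<2m$, and using the bounds $v_a\leq 12$, $\deg\FT(f)\leq 24$ where the paper uses $\lambda_i\leq 10$), which is a harmless refinement rather than a different proof.
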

\begin{proof}
	By composing with an automorphism of $\P^1$, we may assume without loss of
	generality that all additive fibers of $f$ occur over $0$ and $\infty$. The
	divisor $\FT(f)$ is then represented by a polynomial of the form
	$P(s,t)=s^{\lambda_1}t^{\lambda_2}$, and we have $\lambda_1,\lambda_2\leq 10$
	(see Table \ref{table:1}). Let $e$ be a positive integer. We have that
	\[
	  \lambda_i\frac{p^{2e}-1}{12}<p^{2e}
	\]
	which implies that $P(s,t)^{\frac{p^{2e}-1}{12}}$ is not divisible by
	$s^{p^{2e}}$ nor $t^{p^{2e}}$. By Proposition \ref{prop:frob-split},
	$\frac{p^{2e}-1}{12}\FT(f)$ is $p^{2e}$-split. By Proposition
	\ref{prop:cohoinj} (2) we conclude that $f$ is $\infty$-Frobenius split.
\end{proof}

\section{Some explicit examples}\label{sec:explicit examples}
In this section, we discuss the Frobenius splitting behavior of some explicit
elliptic fibrations. In light of our applications in the following sections we will focus on examples of fibrations on supersingular K3 surfaces. We will consider the equations for such fibrations obtained by Shioda \cite{MR918849}, which are not only supersingular but exhibit a range of Artin invariants. We continue to assume $p\geq 5$.

\begin{example}[Shioda]\label{ex:shioda example1} Consider the Weierstrass
	equation
	$$
	y^2=x^3+t^7\cdot x+t^2
	$$
	The minimal resolution of the corresponding projective surface is a K3 surface $X$. Let $f:X\to\P^1$ be the resulting elliptic fibration over the projective line $\P^1$ with homogeneous coordinates $[s:t]$. If $p\neq 17$, then this fibration has one singular fiber of Kodaira type
	$\IV$ (over $[1:0]$), one fiber of type $\III$ (over $[0:1]$), and 17 fibers of type
	$\I_1$ (over the points $[1:\zeta]$, where $\zeta$ is a $17$th root of $-27/4$). Thus, the divisor $\FT(f)$ is represented by the polynomial
	$s^3t^4$. As in Proposition \ref{prop:cohoinj2}, we see that $f$ is
	$\infty$-Frobenius split.
	
	This example was studied by Shioda \cite[Example 7]{MR918849}, who showed that if $p$ is not congruent to $1$ or $17$ modulo $34$ then $X$ is supersingular, with Artin invariant
	determined by the residue class of $p$ modulo $34$ according to the following
	list.
	$$
	\sigma_0(X)=\begin{cases}
	8&\mbox{if }p\equiv 3,5,7,11,23,27,29,31\pmod{34}\\
	4&\mbox{if }p\equiv 9,15,19,25\pmod{34}\\
	2&\mbox{if }p\equiv 13,21\pmod{34}\\
	1&\mbox{if }p\equiv 33\pmod{34}
	\end{cases}
	$$
\end{example}

\begin{example}[Shioda]\label{ex:shioda example2} %{\cite[Example 9]{MR918849}}
    Consider the Weierstrass equation
	$$
	y^2=x^3+t\cdot x+t^8,
	$$
	We let $X\to\P^1$ be the corresponding elliptic K3 surface, where as before the fibration is over the projective line $\P^1$
	with homogeneous coordinates $[s:t]$. If $p\neq 13$, then this fibration has one singular fiber of Kodaira type
	$\III$ (over $[1:0]$), one fiber of type $\IV^*$ (over $[0:1]$), and 13 of
	type $\I_1$ (over the points $[1:\zeta]$, where $\zeta$ is a $13$th root of $-4/27$). Thus, the divisor $\FT(f)$ is represented by the polynomial
	$s^8t^3$. As in Proposition \ref{prop:cohoinj2}, we see that $f$ is
	$\infty$-Frobenius split.
	
	Moreover, Shioda \cite[Example 9]{MR918849} shows that, if $p\equiv 7,11,15,19\pmod{26}$, then $X$ is a supersingular K3 surface of Artin invariant $\sigma_0=6$.
	%Suppose $p$ is a prime such that the minimal resolution is a K3 surface $X$.
\end{example}

\begin{example}[Shioda]\label{ex:shioda example3}
    Consider the Weierstrass equation
	$$
	y^2=x^3+t^{11}+1
	$$
	and as before let $X\to\P^1$ be the corresponding elliptic K3 surface. If $p\neq 11$, then this is an isotrivial elliptic fibration
	with twelve singular fibers of Kodaira type $\II$, located at $[0:1]$ and the points $[1:\zeta]$ where $\zeta$ is an $11$th root of $-1$. The divisor $\FT(f)$ is
	given by
	$$
	  P(s,t)=s^2(s^{11}+t^{11})^2=s^2t^{22}+2s^{13}t^{11}+s^{24}
	$$
	Let $e$ be a positive integer. Every term of the
	product
	\[
	  P(s,t)^{\frac{p^{2e}-1}{12}}=s^{\frac{p^{2e}-1}{6}}(s^{11}+t^{11})^{\frac{p^{2e}-1}{6}}
	\]
	is divisible by $s^{p^{2e}}$ or $t^{p^{2e}}$. By Proposition
	\ref{prop:cohoinj}, $f$ is not $2e$-Frobenius split. By Lemma \ref{lem:level
	lowering}, we conclude that $f$ is not $e$-Frobenius split for any $e\geq 2$.
	
	As explained in Example 8 of \cite{MR918849}, if $p\equiv
  17,29,35,41\pmod{66}$, then $X$ is a supersingular K3 surface of Artin
  invariant $\sigma_0=5$, and if $p\equiv 65\pmod{66}$ then $X$ is a
  supersingular K3 surface of Artin invariant $\sigma_0=1$. 
\end{example}

\section{Purely inseparable multisections on elliptic supersingular K3 surfaces}
\label{sec:main event}

In this section, we derive some properties of pullback maps on cohomology for $e$-Frobenius split elliptic supersingular K3
surfaces.

We begin by recalling the Artin-Tate isomorphism. Let $f\colon X\to C$ be a
Jacobian elliptic fibration. The Leray spectral sequence for the sheaf $\G_m$ on
the morphism $f\colon X\to C$ induces an isomorphism
\begin{equation}\label{eq:Artin-Tate1}
\Br(X)\xrightarrow{\sim}\H^1(C,\Pic_{X/C}^\circ)
\end{equation}
(see \cite[\S 3]{MR1610977}). 
Given any Brauer class $\alpha\in\Br(X)$, the Artin-Tate isomorphism
(\ref{eq:Artin-Tate1}) produces a $\Pic_{X/C}^\circ$-torsor
$X_{\alpha}^{\circ}\to C$. Among all compactifications of $X_{\alpha}^{\circ}$
to elliptic surfaces over $C$, there exists a unique minimal elliptic surface
$f_{\alpha}\colon X_{\alpha}\to C$ such that the inclusion
$X_{\alpha}^\circ\subset X_{\alpha}$ is equal to the smooth locus of
$f_{\alpha}$ (see \cite[\S 1]{MR0417182}, especially the material after
Proposition 1.4). We refer to the image of $\alpha$ under the map
\begin{equation}\label{eq:Artin-Tate2}
\alpha\mapsto \left(f_{\alpha}\colon X_{\alpha}\to C\right)
\end{equation}
as its \textit{associated minimal elliptic surface}. Note that the associated
minimal surface of the zero class is isomorphic to the relatively minimal model of
$X$.

\begin{lem}\label{lem:Brauer classes and sections}
	Let $f\colon X\to C$ be a Jacobian elliptic surface and $\alpha\in\Br(X)$ a
	Brauer class. The following are equivalent.
	\begin{enumerate}
		\item $\alpha=0$
		\item The associated elliptic surface $f_{\alpha}\colon X_{\alpha}\to C$ is
		Jacobian.
		\item The generic fiber $X_{\alpha,k(C)}\to\Spec k(C)$ has a $k(C)$-rational
		point.
	\end{enumerate}
\end{lem}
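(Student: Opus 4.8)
The plan is to prove the chain of equivalences $(1)\Rightarrow(2)\Rightarrow(3)\Rightarrow(1)$, exploiting the explicit description of the Artin--Tate correspondence recalled just above the statement. The implication $(1)\Rightarrow(2)$ is essentially the parenthetical remark already made: when $\alpha=0$, the torsor $X_\alpha^\circ\to C$ is the trivial $\Pic^\circ_{X/C}$-torsor, hence is isomorphic (as a torsor, and in particular as a scheme over $C$) to $\Pic^\circ_{X/C}$ itself, which is the smooth locus of the relative minimal model $f'\colon X'\to C$ of $f$. By the uniqueness clause in the construction of the associated minimal elliptic surface, $f_\alpha$ is therefore isomorphic to $f'$, which is Jacobian (its section being the zero section of $\Pic^\circ_{X/C}$, or equivalently the strict transform of the section of $f$).

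For $(2)\Rightarrow(3)$: if $f_\alpha$ is Jacobian, it has a section $C\to X_\alpha$, whose restriction to the generic point gives a $k(C)$-rational point of $X_{\alpha,k(C)}$. (One should note that the image of a section necessarily meets the smooth locus $X_\alpha^\circ$, so the generic point of the section lands in the smooth — indeed in the open dense — part, but this is automatic at the generic fiber since $f_\alpha$ is generically smooth.) The substantive implication is $(3)\Rightarrow(1)$. Here I would argue that a $k(C)$-rational point of the generic fiber of $f_\alpha$ trivializes the torsor: the generic fiber $X_{\alpha,k(C)}$ is, by construction, a torsor under the generic fiber $\Pic^\circ_{X/C}\otimes k(C)$, which is an elliptic curve $E/k(C)$ with origin coming from the section of $f$; a torsor under an abelian variety over a field has a rational point if and only if it is the trivial torsor. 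Hence $X_{\alpha,k(C)}\cong E$ as $E$-torsors over $k(C)$. Spreading this out, the $\Pic^\circ_{X/C}$-torsor $X_\alpha^\circ\to C$ is trivial over the generic point, and since $\H^1(C,\Pic^\circ_{X/C})\to \H^1(k(C),E)$ is injective (the Artin--Tate group $\H^1(C,\Pic^\circ_{X/C})$ injects into the Galois cohomology of the generic fiber — this is part of the Artin--Tate formalism, cf. \cite[\S 3]{MR1610977}), the class of the torsor, and therefore $\alpha$ under the isomorphism \eqref{eq:Artin-Tate1}, is zero.

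The main obstacle is the last step: justifying that triviality of $X_\alpha^\circ$ at the generic point of $C$ forces triviality of the torsor over all of $C$, i.e. the injectivity of the restriction map on $\H^1$ of $\Pic^\circ_{X/C}$. One clean way is to invoke that $\Pic^\circ_{X/C}$ is a smooth separated group scheme over $C$ with generic fiber an abelian variety, so $\H^1(C,\Pic^\circ_{X/C})$ (fppf or étale — they agree here) injects into $\H^1$ of the function field by a standard limit/spreading-out argument together with the fact that a torsor under a smooth group scheme over a regular one-dimensional base is determined by its generic fiber once one fixes the model; alternatively, one can bypass this entirely by noting that a section of $f_\alpha$ over $\Spec k(C)$ extends (by properness of $X_\alpha\to C$ and the valuative criterion, $C$ being a smooth curve) to a section over all of $C$, which lands in the smooth locus $X_\alpha^\circ$ away from finitely many points and, after a small argument with the fiber components (using that $f_\alpha$ is minimal, so each fiber contains a multiplicity-one component meeting the section), in fact lands in $X_\alpha^\circ$ everywhere; this makes $f_\alpha$ Jacobian, reducing $(3)\Rightarrow(1)$ to the converse of $(1)\Rightarrow(2)$, namely that $f_\alpha$ Jacobian implies the torsor class is trivial. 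That converse is again the statement that a torsor under $\Pic^\circ_{X/C}$ with a section is trivial, which holds by the very definition of a torsor. I would write the proof in whichever of these two styles is shorter in the paper's conventions, most likely routing everything through the valuative-criterion extension of the section so that the only external input is the Artin--Tate isomorphism itself.
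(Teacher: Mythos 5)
Your preferred route for $(3)\Rightarrow(1)$ --- extend the $k(C)$-point to a section of $f_\alpha$ by taking its closure and using properness over the smooth curve $C$, observe that any section of the minimal fibration $f_\alpha$ lies in the smooth locus $X_\alpha^\circ$, and conclude because a torsor with a section is trivial --- is exactly the paper's proof, which also handles $(1)\Leftrightarrow(2)$ by the same ``sections land in the smooth locus'' observation. Your first sketch of $(3)\Rightarrow(1)$, resting on injectivity of $\H^1(C,\Pic^\circ_{X/C})\to\H^1(\Spec k(C),E)$, treats that injectivity as a quotable part of the Artin--Tate formalism without justification, so it is good that you route the final write-up through the valuative-criterion argument instead; with that choice the proposal is correct and matches the paper.
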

\begin{proof}
	It is immediate from the Artin-Tate isomorphism (\ref{eq:Artin-Tate1}) that
	$\alpha=0$ if and only if the torsor $X_{\alpha}^\circ\to C$ admits a
	section. If this is the case, then $f_{\alpha}\colon X_{\alpha}\to C$ is
	Jacobian. Conversely, any section of $f_{\alpha}$ must be contained in the
	smooth locus of $f_{\alpha}$, and hence gives rise to a section of the torsor,
	so $(1)$ is equivalent to $(2)$. Clearly $(2)$ implies $(3)$, and conversely
	taking the closure of a rational point shows $(3)$ implies $(2)$.
\end{proof}

\begin{prop}\label{prop:meat}
	Let $f\colon X\to C$ be a Jacobian elliptic surface and $\alpha\in\Br(X)$ a
	Brauer class. The associated minimal elliptic surface $f_\alpha\colon
	X_\alpha\to C$ admits a purely inseparable multisection if and only if there
	exists a smooth proper curve $C'$ and a purely inseparable finite cover $C'\to
	C$ such that for any resolution $Y\to X\times_CC'$ of the pullback the class
	$\alpha$ is in the kernel of the pullback map
	$$\Br(X)\to\Br(Y).$$
\end{prop}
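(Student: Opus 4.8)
The plan is to prove the two directions separately, translating ``purely inseparable multisection of $X_\alpha$'' into a geometric statement about the torsor $X_\alpha^\circ$ and the Brauer class $\alpha$.

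First I would handle the easy direction: suppose $\varphi\colon C'\to C$ is a purely inseparable finite cover and $Y\to X\times_C C'$ a resolution with $\alpha$ pulled back to $0$ in $\Br(Y)$. By Lemma~\ref{lem:Brauer classes and sections} applied to the Jacobian elliptic surface $Y\to C'$ (more precisely, to its relative minimal model, noting $\Br$ is a birational invariant of smooth proper surfaces) the pullback class $\alpha|_Y=0$ means the generic fiber acquires a $k(C')$-rational point. But the generic fiber of $Y\to C'$ is $X_{\alpha,k(C)}\times_{k(C)}k(C')$ (pullback of torsors commutes with base change, and resolving singularities does not change the generic fiber). So $X_{\alpha,k(C)}$ has a $k(C')$-point; since $k(C')/k(C)$ is purely inseparable, taking the closure of this point in $X_\alpha$ and normalizing gives an integral subscheme $\Sigma\subset X_\alpha$ finite flat over $C$ with purely inseparable function-field extension, i.e.\ a purely inseparable multisection. (One should check $\Sigma$ lands in the smooth locus, or simply take the closure in $X_\alpha$ of the image in $X_{\alpha,k(C)}$ of the generic point of $\Spec k(C')$; flatness over $C$ is automatic over a smooth curve once $\Sigma$ is integral and dominant.)

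Conversely, suppose $\Sigma\subset X_\alpha$ is a purely inseparable multisection. Let $k(C')$ be the function field of $\Sigma$; it is a purely inseparable finite extension of $k(C)$, so there is a unique smooth proper curve $C'$ with that function field and a purely inseparable finite cover $C'\to C$, and the generic point $\eta'=\Spec k(C')$ maps to $\Sigma_{k(C)}\subset X_{\alpha,k(C)}$, giving a $k(C')$-point of the generic fiber. Now form the base change $X_{\alpha,k(C')}=X_{\alpha,k(C)}\times_{k(C)}k(C')$: this is a genus-one curve over $k(C')$ with a rational point, hence isomorphic to its Jacobian; equivalently the $\Pic^\circ$-torsor becomes trivial after base change to $k(C')$. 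Spreading this out: choose any resolution $Y\to X\times_C C'$. Its generic fiber is $X_{\alpha,k(C')}$, which has a rational point, so by Lemma~\ref{lem:Brauer classes and sections} (applied to $Y\to C'$ or its minimal model) the class $\alpha|_Y\in\Br(Y)$ vanishes. Here I use that $\alpha|_Y$ is the Brauer class whose associated elliptic surface is $Y$ (up to minimal model): the functoriality of the Artin--Tate isomorphism under the cartesian square $X\times_C C'\to X$ over $C'\to C$ identifies the image of $\alpha$ in $\H^1(C',\Pic^\circ_{(X\times_C C')/C'})$ with the torsor $X_\alpha^\circ\times_C C'$, which by construction has a section exactly when the generic fiber does.

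The main obstacle I expect is the bookkeeping around resolutions and minimal models: the Artin--Tate isomorphism and Lemma~\ref{lem:Brauer classes and sections} are stated for \emph{minimal} elliptic surfaces, whereas an arbitrary resolution $Y$ of $X\times_C C'$ (which is typically singular, since $C'\to C$ is wildly ramified in char $p$) need not be relatively minimal. The key point to nail down is that the vanishing of $\alpha\in\Br(X)$ in $\Br(Y)$ is independent of the choice of resolution (all resolutions of a fixed surface have the same Brauer group, as $\Br$ is a birational invariant among smooth proper surfaces over an algebraically closed field) and that passing to the relative minimal model of $Y\to C'$ changes neither $\Br$ nor the generic fiber. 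Once that is set up cleanly, both implications reduce to Lemma~\ref{lem:Brauer classes and sections} together with the elementary fact that a purely inseparable field extension of $k(C)$ corresponds to a unique purely inseparable cover of smooth curves, and that rational points of a genus-one curve spread out to multisections of its minimal model.
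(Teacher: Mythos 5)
Your proposal is correct and takes essentially the same route as the paper: birational invariance of $\Br$ to reduce to any one resolution, functoriality of the Artin--Tate isomorphism under the purely inseparable base change, Lemma \ref{lem:Brauer classes and sections} to translate vanishing of the pulled-back class into a $k(C')$-point of $X_{\alpha,k(C)}$, and closure/spreading-out to pass between such points and purely inseparable multisections. The only quibble is a recurring slip of phrasing: the generic fiber of $Y\to C'$ is $X_{k(C')}$ (the Jacobian), not $X_{\alpha,k(C')}$, and $Y$ is not the elliptic surface associated to $\alpha|_Y$; what you need, and do state correctly afterwards, is that the torsor attached to $\alpha|_Y$ under Artin--Tate for $Y\to C'$ is $X^{\circ}_{\alpha}\times_C C'$, whose generic fiber is $X_{\alpha,k(C)}\otimes_{k(C)}k(C')$.
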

\begin{proof}
	Note that it suffices to prove the result for any resolution, since the Brauer
	group is a birational invariant for regular schemes of dimension $2$.
	(Birational invariance of the $p$-primary part is subtle in higher dimension.)
	Given any such $C'$ and $Y$, functoriality of the Leray spectral sequence
	yields a diagram
	$$
	\begin{tikzcd}
	\Br(X)\arrow[hook]{r}\arrow{d}&\Br(X_{k(C)})\arrow{r}{\sim}\arrow{d}&\H^1(\Spec k(C),\Pic_{X_{k(C)}/k(C)}^\circ)\arrow{d}\\
	\Br(Y)\arrow[hook]{r}&\Br(Y_{k(C')})\arrow{r}{\sim}&\H^1(\Spec k(C'),\Pic_{Y_{k(C')}/k(C')}^\circ)
	\end{tikzcd}
	$$
	Here, the right vertical arrow is deduced from the isomorphism
	$Y_{k(C')}\xrightarrow{\sim} X_{k(C')}$ of generic fibers, the injectivity of
	the left horizontal arrows follows from the regularity of $X$ and $Y$, and the
	isomorphy of the right horizontal arrows follows from the Leray spectral
	sequence. It follows from Lemma \ref{lem:Brauer classes and sections} that
	$\alpha$ is in the kernel of the pullback map if and only if there exists a
	map $\Sigma$ fitting into a diagram
	$$
	\begin{tikzcd}
	&X_{\alpha}\arrow{d}{f_{\alpha}}\\
	C'\arrow{r}\arrow{ur}{\Sigma}&C
	\end{tikzcd}
	$$
	Letting $C'\to C$ range over all purely inseparable covers gives the result.
\end{proof}

\begin{lem}\label{lem:tfree}
	If $f\colon X\to C$ is a relatively minimal Jacobian elliptic surface with section
	$\sigma$ such that $\sigma(C)$ has negative self-intersection, then the
	induced map $\Pic_C^\circ\to\Pic_X^\circ$ is an isomorphism and the quotient
	$\Pic_X/\Pic_X^\circ$ is torsion free.
\end{lem}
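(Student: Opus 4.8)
The plan is to analyze the Néron--Severi lattice of $X$ via the elliptic fibration, following the Shioda--Tate formalism. First I would establish that $\Pic_C^\circ \to \Pic_X^\circ$ is an isomorphism. The section $\sigma$ gives a splitting of the surjection $f^* \colon \Pic_C \to \Pic_X$ onto its image, but more to the point, for the connected components: pullback along $f$ induces $\Pic_C^\circ \to \Pic_X^\circ$, and precomposing with the restriction to $\sigma(C)$ gives a retraction, since $\sigma^* f^* = \id$. So $\Pic_C^\circ \to \Pic_X^\circ$ is a split injection. For surjectivity, note that any line bundle on $X$ algebraically equivalent to zero restricts trivially to a general fiber $E$ (which is an elliptic curve), because $\Pic^\circ_X$ maps to $\Pic^\circ_E = E$ and this map must be constant in families — here is where I would use that the fibration has a section, so the generic fiber is an elliptic curve rather than a torsor, pinning down the relevant component. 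More carefully: $\Pic_X^\circ$ is an abelian variety, the Albanese of $X$; since $X$ is fibered in genus-one curves over $C$ with a section, the Albanese of $X$ agrees with that of $C$, giving the claimed isomorphism.

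Next, for the torsion-freeness of $\Pic_X/\Pic_X^\circ = \NS(X)$, I would invoke the Shioda--Tate exact sequence. The subgroup of $\NS(X)$ generated by the zero section $\sigma(C)$, a general fiber $F$, and the components of reducible fibers not meeting $\sigma$ is the "trivial lattice" $T$, and $\NS(X)/T \cong \MW(f)$, the Mordell--Weil group of the generic fiber. The trivial lattice $T$ is itself torsion-free: it is generated by a root-type lattice from the fiber components (which is negative definite, hence torsion-free) together with the rank-two hyperbolic-type piece spanned by $\sigma(C)$ and $F$. The key numerical input is the hypothesis that $\sigma(C)^2 < 0$: by adjunction on a minimal elliptic surface $\sigma(C)^2 = -\chi(\ms O_X) = 1 - p_a$, so $\sigma(C)^2 < 0$ forces $\chi(\ms O_X) > 0$, i.e. $p_g \geq 1$ (since $q = 0$ here because $\Pic_X^\circ \cong \Pic_C^\circ$ and we may further observe $C = \P^1$ in our applications, though the argument works in general once $q$ is controlled). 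This ensures the intersection form on $T$ is nondegenerate, so $T$ is a genuine lattice. Finally, $\MW(f) = \NS(X)/T$ need not be torsion-free in general, but the torsion of $\MW(f)$ injects into the torsion of $\NS(X)$ only through $T$-saturation considerations — so I would instead argue directly that $\NS(X)$ is torsion-free by a different route.

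The cleaner route for torsion-freeness, which I expect to be the main obstacle to state crisply, is this: for a smooth projective surface $X$, the torsion subgroup of $\NS(X)$ is dual (Pontryagin) to the torsion in $\H^2_{\et}(X, \Z_\ell(1))$ and is related to $\pi_1^{\ab}(X)$; concretely, $\NS(X)_{\tors} \hookrightarrow \H^1_{\et}(X, \ms O_X^*)_{\tors}$-type obstructions vanish when $X$ is simply connected. A minimal elliptic surface with a section and with $\sigma(C)^2 < 0$ over $C = \P^1$ is simply connected (the fundamental group is a quotient of $\pi_1(\P^1) = 1$ after accounting for multiple fibers, of which there are none since there is a section), hence $\NS(X) = \Pic(X)$ is torsion-free. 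I would carry the steps out in this order: (1) produce the retraction $\sigma^* \circ f^*$ to get the split injection on $\Pic^\circ$; (2) identify $\Pic_X^\circ$ with $\Alb(X) \cong \Alb(C) = \Pic_C^\circ$ using the genus-one fibration with section; (3) deduce $\NS(X) = \Pic_X/\Pic_X^\circ$ and observe $q(X) = \dim \Pic_C^\circ$; (4) use the section plus minimality plus $\sigma(C)^2 < 0$ to conclude $X$ has no multiple fibers and trivial fundamental group; (5) conclude $\NS(X)$ is torsion-free. The main obstacle is step (4)–(5): pinning down torsion-freeness of $\NS$ requires either the simple-connectedness argument or a careful saturation argument with the Shioda--Tate lattice, and I would need the hypothesis $\sigma(C)^2 < 0$ precisely to rule out the degenerate cases (product elliptic surfaces, where $\sigma(C)^2 = 0$ and $\NS$ can fail to be torsion-free in the way claimed).
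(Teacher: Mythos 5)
Your proposal has genuine gaps in both halves. (For context: the paper disposes of this lemma by citing Miranda, Lemma VII.1.2 of \emph{The basic theory of elliptic surfaces}, with the remark that the argument there is characteristic-free for $p\geq 5$; so what matters is whether your argument would actually substitute for that citation.) For the isomorphism $\Pic^\circ_C\to\Pic^\circ_X$, the retraction $\sigma^\ast f^\ast=\id$ giving split injectivity is fine, but your surjectivity step never uses the hypothesis $\sigma(C)^2<0$, and without it the statement is false: the product $E\times C\to C$ is a minimal Jacobian elliptic surface with a section, yet $\Pic^\circ_{E\times C}\cong E^\vee\times\Pic^\circ_C$ is strictly larger than $\Pic^\circ_C$. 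So the assertion that ``the Albanese of $X$ agrees with that of $C$'' because the fibration has genus-one fibers and a section cannot be right as stated. The hypothesis enters exactly here: $\sigma(C)^2=-\deg\mathbb{L}$ where $\mathbb{L}^{-1}=\R^1f_\ast\ms O_X$, so negativity gives $\H^0(C,\R^1f_\ast\ms O_X)=0$, whence by Leray $h^1(X,\ms O_X)=h^1(C,\ms O_C)$ and the split injection of $\Pic^\circ$'s is forced to be an isomorphism (with a little extra care about non-reducedness of the Picard scheme in characteristic $p$, again controlled by $h^1(\ms O_X)$). Your argument as written omits precisely this, and also contains the incorrect deduction ``$\chi(\ms O_X)>0$, i.e. $p_g\geq 1$'' (rational elliptic surfaces have $\chi=1$, $p_g=0$), though that slip is not load-bearing.

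For torsion-freeness of $\NS(X)=\Pic_X/\Pic_X^\circ$, your ``cleaner route'' has two problems. First, it silently assumes $C=\P^1$, whereas the lemma is stated for an arbitrary base curve (and for $g(C)\geq 1$ the surface is certainly not simply connected, yet the conclusion still holds). Second, and more seriously in the setting of this paper, the implication ``$\pi_1^{\text{\'et}}(X)=1\Rightarrow\NS(X)$ torsion-free'' is a prime-to-$p$ statement: a $p$-torsion class in $\NS(X)$ lifts (using divisibility of $\Pic^\circ_{X,\mathrm{red}}(k)$) to a $p$-torsion line bundle, which corresponds to a $\m_p$-torsor, i.e.\ a flat but generally non-\'etale cover, invisible to the \'etale fundamental group. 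Since the whole paper works in characteristic $p$ and the lemma is applied to conclude that $\Pic$ is a finitely generated free constant group scheme, the $p$-part is exactly the part you cannot wave away. You would need either to finish the Shioda--Tate-style argument you started and abandoned (a torsion class is numerically trivial; its image in the Mordell--Weil group has height zero, hence is torsion, and one then rules it out by intersecting with the zero section and fiber components and applying Zariski's lemma), or to control $\Pic^\tau/\Pic^\circ$ directly via the fibration; the simple-connectedness shortcut does not close the gap.
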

\begin{proof}
	This is \cite[Lemma VII.1.2]{MR1078016}. Note that while Miranda seems to make
	a uniform (often unstated) assumption in \cite{MR1078016} that the base field
	is $\C$, the results we quote in this paper do not depend upon that
	assumption, as one can see from the proofs. We do freely use the assumption
	that the base field has characteristic at least $5$.
\end{proof}

\begin{lem}\label{lem:negative thing}
	If $f:X\to\P^1$ is a Jacobian elliptic fibration with at least one singular
	fiber, then, in the notation of Diagram \eqref{eq:the situation room}, for all
	$e>0$ we have that 
	$$\deg\R^1(\phi_e)_\ast\ms O_{Y_e}<0.$$
\end{lem}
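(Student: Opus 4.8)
The plan is to combine the explicit description of $\R^1(\phi_e)_\ast\ms O_{Y_e}$ from Lemma~\ref{lem:frobenius pullback has negative thing} with the classical fact that for a Jacobian elliptic K3 surface (or more generally a relatively minimal Jacobian elliptic surface over $\P^1$ with a section of negative self-intersection), the sheaf $\R^1 f_\ast\ms O_X$ has negative degree. First I would reduce to the case where $X$ is relatively minimal, which is harmless since, as noted after Definition~\ref{def:frobenius split elliptic fibration}, neither $\R^1 f_\ast\ms O_X$ nor $\R^1(\phi_e)_\ast\ms O_{Y_e}$ changes upon passing to relative minimal models. Then $\R^1 f_\ast\ms O_X = \mathbb{L}^{-1}$ where $\mathbb{L}$ is the fundamental line bundle, which is known to have $\deg\mathbb{L} = \chi(\ms O_X) > 0$; in particular for an elliptic surface with at least one singular fiber $\deg\mathbb{L}\geq 1$, so $\deg\R^1 f_\ast\ms O_X = -\deg\mathbb{L} < 0$. (This is where the hypothesis of at least one singular fiber enters: if $f$ were smooth then $\deg\R^1 f_\ast\ms O_X$ could be $0$, as for a trivial or isotrivial product.)

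The key computation then runs as follows. By Lemma~\ref{lem:frobenius pullback has negative thing}, for even $e = 2e'$ we have a canonical identification
$$
\R^1(\phi_{e})_\ast\ms O_{Y_{e}}\;\cong\;(F^{e})^\ast\R^1 f_\ast\ms O_X\left(\tfrac{p^{e}-1}{12}\FT(f)\right),
$$
so that $\deg\R^1(\phi_e)_\ast\ms O_{Y_e} = p^{e}\deg\R^1 f_\ast\ms O_X + \tfrac{p^{e}-1}{12}\deg\FT(f)$. Since $\deg\R^1 f_\ast\ms O_X = -\chi(\ms O_X)$ and $\deg\FT(f)\leq \deg\Delta(f)$, while for a Jacobian elliptic surface $\deg\Delta(f) = 12\chi(\ms O_X)$, one gets
$$
\deg\R^1(\phi_e)_\ast\ms O_{Y_e}\;\leq\; -p^{e}\chi(\ms O_X) + (p^{e}-1)\chi(\ms O_X)\;=\;-\chi(\ms O_X)\;<\;0.
$$
For odd $e$ one runs the same argument, being slightly careful that fiber types (hence the precise coefficients in $\FT$) can change under odd Frobenius pullback, as flagged in Section~\ref{sec:frobenius split elliptic fibrations}; the only thing one needs is the inequality $\deg\FT \leq \deg\Delta$, which survives. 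Alternatively, since the statement is a degree inequality one can sidestep the parity issue entirely by factoring $\phi_e$ through $\phi_{e'}$ for a smaller $e'$ via the universal property of the minimal resolution (as in Lemma~\ref{lem:level lowering}) and inducting, using that the base-change map $F^\ast\R^1 f_\ast\ms O_X\to\R^1\phi_\ast\ms O_Y$ is injective so $\deg\R^1\phi_\ast\ms O_Y$ can only increase by a bounded amount at each step while $\deg(F^\ast-)$ multiplies by $p$.

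The main obstacle is bookkeeping the degree of $\FT(f)$ against $\deg\Delta(f)$ precisely enough: $\FT(f)$ replaces the discriminant valuation by $6$ at fibers of type $\I_n^\ast$, which only lowers the degree, so $\deg\FT(f)\leq\deg\Delta(f) = 12\chi(\ms O_X)$ always holds and the inequality above goes through; I would state this bound explicitly and note it is exactly what makes $\tfrac{p^e-1}{12}\deg\FT(f) \leq (p^e-1)\chi(\ms O_X)$. Everything else is the standard numerology of elliptic surfaces (Noether's formula / the expression $\deg\Delta = 12\deg\mathbb{L}$), which I would cite from \cite[Section III]{MR1078016} rather than reprove.
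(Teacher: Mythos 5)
Your even-$e$ computation is correct and in fact gives the sharper bound $\deg\R^1(\phi_e)_\ast\ms O_{Y_e}\leq-\chi(\ms O_X)$, but as written the argument does not cover odd $e$, and the lemma is needed for all $e>0$. The identification you invoke, Lemma \ref{lem:frobenius pullback has negative thing}, is proved in the paper only for even powers of Frobenius (and is stated there for elliptic K3 surfaces, whereas Lemma \ref{lem:negative thing} concerns arbitrary Jacobian elliptic fibrations; the proof of that identification is local, so this part is harmless, but it deserves a remark). For odd $e$ the twisting divisor is simply not $\frac{p^e-1}{12}\FT(f)$ --- that multiple is not even integral --- so ``running the same argument with $\deg\FT(f)\leq\deg\Delta(f)$'' is not available: you would have to re-derive the analogue of Lemma \ref{lem:frobenius pullback has negative thing} by redoing the Tate-algorithm count, where the local coefficient at an additive fiber of discriminant valuation $v$ becomes $\lfloor p^ev/12\rfloor$ (and $\frac{p^e-1}{2}$ at an $\I_n^\ast$ fiber, whose type can also change), and then observe that each such coefficient is strictly smaller than $p^ev/12$, so the total degree stays strictly negative. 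This can be made to work, but it is precisely the ``analysis for the odd iterates'' that Section \ref{sec:frobenius split elliptic fibrations} explicitly declines to carry out, and your fallback --- inducting via Lemma \ref{lem:level lowering} on the grounds that the degree ``can only increase by a bounded amount while $\deg(F^\ast-)$ multiplies by $p$'' --- is not an argument: the increment itself is of size comparable to $p\cdot|\deg|$, and making the induction close requires exactly the strict local inequality above.

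The paper's own proof is different and sidesteps all of this bookkeeping: it cites \cite[Lemma II.5.7]{MR1078016}, which gives $\deg\R^1(\phi_e)_\ast\ms O_{Y_e}\leq 0$ with equality if and only if $\phi_e$ has no singular fibers, and then checks directly that $Y_e$ retains a singular fiber, because the naive Frobenius pullback of the Weierstrass equation has discriminant $\Delta^{p^e}$ and minimalization only rescales the discriminant by powers of $t^{12}$, which cannot make it a unit at a point where $\Delta$ vanishes since $p$ is prime to $12$. That argument treats even and odd $e$ uniformly and needs no degree formula. If you want to keep your quantitative route, the cleanest repair is to prove the odd-power analogue of Lemma \ref{lem:frobenius pullback has negative thing} with the floor coefficients indicated above; otherwise, replace the odd case by the paper's singular-fiber-persistence argument.
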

\begin{proof}
	As explained in \cite[Lemma II.5.7]{MR1078016}, the degree is at most $0$, and it is $0$ if
	and only if the family has all smooth fibers. (The proof there, while it may
	be implicitly stated over a field of characteristic $0$, is true over any
	field of characteristic at least $5$.) On the other hand, we can compute the
	discriminant of $Y_e\to\P^1$ around a point $x\in\P^1$ (with local uniformizer
	$t$) as follows. If $\Delta$ is the discriminant of a minimal Weierstrass model for $X\to\P^1$, pullback by
	$F^e$ gives a discriminant of $\Delta^{p^e}$ for the naïve pullback of the
	Weierstrass equation. As in the proof of Lemma \ref{lem:frobenius pullback has negative thing}, to make the pulled back curve minimal near $x$, one
	scales the coordinates $x$ and $y$ by powers of $t$ so that the discriminant
	gets scaled by a power of $t^{12}$. Since $p$ is prime to $12$, it is impossible
	to make $\Delta^{p^e}t^{12n}$ invertible at $x$, whence $Y_e$ still has a
	singular fiber at $x$.
\end{proof}

We now consider a Jacobian elliptic supersingular K3 surface $f\colon X\to\P^1$.
Suppose $$\alpha\in\Br(X\tensor k\llbracket t\rrbracket)$$ is a Brauer class
(which will eventually be the class defined by \cite[Proposition
2.2.4]{1804.07282}, representing the universal formal Brauer class of $X$).
By the modular description of the Artin--Tate isomorphism \cite[Proposition
4.3.16]{1804.07282}, $\alpha$ comes with an associated family of elliptic K3
surfaces
$$
\mf X\to\Spec k\llbracket t\rrbracket \times\P^1
$$
over $\Spec k\llbracket t\rrbracket$ whose smooth locus is a family of torsors
under the smooth locus of the original Jacobian fibration $X\to\P^1$.
Restricting to a geometric generic fiber over a chosen algebraic closure
$k\llparen t\rrparen \subset\overline{k\llparen t\rrparen }$, we find an elliptic K3 surface
$$
\mf X_\infty=\mf X\tensor\overline{k\llparen t\rrparen }\to\P^1_{\overline{k\llparen t\rrparen }}
$$
over the field $\overline{k\llparen t\rrparen }$, which is non-Jacobian whenever $\alpha$ is
non-trivial (and thus the associated torsor is non-trivial). We wish to
understand when this elliptic surface admits a purely inseparable multisection.

Write $\alpha_\eta\in\Br(X\tensor_k k\llparen t\rrparen )$ for the generic value of the Brauer
class, and let $\alpha_{\infty}\in\Br(X\tensor\overline{k\llparen t\rrparen })$ denote its
restriction to the given geometric generic fiber.

\begin{prop}\label{prop:cheese}
	Suppose that the elliptic surface $\mf X_\infty\to\P^1_{\overline{k\llparen t\rrparen }}$
	admits a purely inseparable multisection. There exists a finite purely
	inseparable morphism $C\to\P^1$ from a smooth proper curve over $k$ and a
	resolution $Y\to X\times_{\P^1}C$ of the pullback such that the class
	$\alpha_{\eta}$ is in the kernel of the pullback map
	$$
	\Br(X\tensor_k k\llparen t\rrparen )\to\Br(Y\tensor_k k\llparen t\rrparen )
	$$
\end{prop}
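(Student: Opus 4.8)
The plan is to reduce Proposition~\ref{prop:cheese} to Proposition~\ref{prop:meat} applied over the base field $k\(t\)$, after translating the hypothesis about a purely inseparable multisection over the big field $\overline{k\(t\)}$ into a purely inseparable multisection defined over a finite (separable) extension of $k\(t\)$ itself. The key point is that a geometric object defined over $\overline{k\(t\)}$ is already defined over some finite subextension, and since $k\(t\)$ is separably closed in... wait, it is not—but we can still descend.

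Here is the sequence of steps I would carry out. First, a purely inseparable multisection $\Sigma_\infty\subset\mf X_\infty$ of $\mf X_\infty\to\P^1_{\overline{k\(t\)}}$ is a closed integral subscheme finite and flat over $\P^1$, inducing a purely inseparable function-field extension. Since everything is of finite type, $\Sigma_\infty$ and the multisection structure descend to a finite extension $L/k\(t\)$; replacing $L$ by its perfect closure does not change that a multisection is purely inseparable over $\P^1_L$, but to stay with a curve over $k$ I instead argue as follows. By \cite[Proposition 4.3.16]{1804.07282} (or \cite[Proposition 2.2.4]{1804.07282}), the family $\mf X\to\Spec k\llbracket t\rrbracket\times\P^1$ is the geometric realization of the universal formal Brauer class, so the generic fiber $\mf X_\eta\to\P^1_{k\(t\)}$ is exactly the associated minimal elliptic surface (in the sense of \eqref{eq:Artin-Tate2}) of the Brauer class $\alpha_\eta\in\Br(X\tensor_k k\(t\))$ over the base $\P^1_{k\(t\)}$. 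Applying Proposition~\ref{prop:meat} with base curve $\P^1_{k\(t\)}$ and Jacobian elliptic surface $X\tensor_k k\(t\)\to\P^1_{k\(t\)}$: the associated minimal elliptic surface $\mf X_\eta$ admits a purely inseparable multisection if and only if there is a smooth proper curve $D$ over $k\(t\)$ and a purely inseparable finite cover $D\to\P^1_{k\(t\)}$ with $\alpha_\eta$ in the kernel of $\Br(X\tensor_k k\(t\))\to\Br(Y')$ for a resolution $Y'$ of $(X\tensor_k k\(t\))\times_{\P^1_{k\(t\)}}D$.

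The remaining work is to show (i) that the hypothesis on $\mf X_\infty$ gives a purely inseparable multisection of $\mf X_\eta\to\P^1_{k\(t\)}$, and (ii) that the purely inseparable cover $D\to\P^1_{k\(t\)}$ produced over $k\(t\)$ descends to a purely inseparable cover $C\to\P^1$ of curves over $k$. For (i): a multisection of $\mf X_\infty$ descends to a multisection over a finite extension $L/k\(t\)$, and the composite $\Spec L\to\Spec k\(t\)$ can be taken separable after enlarging; since the base field $k$ is perfect, purely inseparable covers of $\P^1_{k\(t\)}$ all arise by base change from purely inseparable covers of $\P^1_k\tensor k\(t\)$, i.e. the inseparable part of the cover only involves the $\P^1$ direction and not the $k\(t\)$ direction, because $k\(t\)/\F_p$ has $p$-basis $\{t\}$ and the Frobenius of $\P^1_{k\(t\)}$ over $\P^1_k$ factors through $\P^1_k\tensor k\(t\)^{1/p}$. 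Concretely, a purely inseparable finite cover of $\P^1_{k\(t\)}$ is dominated by some power of the relative Frobenius $\P^1_{k\(t\)}\to\P^1_{k\(t\)}$, and this relative Frobenius is the base change of the absolute Frobenius $F^e:\P^1_k\to\P^1_k$ composed with an Artin--Schreier-type extension of $k\(t\)$; the latter extension can be absorbed into passing to a further separable extension of $k\(t\)$, which does not affect the vanishing of a Brauer class. Thus we may take $C\to\P^1$ to be (a component of the normalization of) $\P^1\times_{F^e,\P^1}\P^1$ over $k$, base-changed appropriately, and then Proposition~\ref{prop:meat}'s output over $k\(t\)$ becomes the desired statement over $k\(t\)$ with $C$ defined over $k$.

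The main obstacle is step (ii): carefully separating the inseparability coming from the $\P^1$-direction (which gives the honest curve $C$ over $k$ that we want) from inseparability or mere transcendence coming from the $k\(t\)$-direction (which must be shown to be irrelevant, i.e. absorbable into a separable base extension of $k\(t\)$ that preserves non-vanishing of $\alpha_\eta$ or, rather, preserves the property of being in the kernel). This is exactly the kind of subtlety the authors flag in the introduction as the source of the error in \cite{Liedtke15}—``one must in fact make a finite flat cover of $\xi\times S$''—so I expect the proof to be written with some care here: one wants the cover $C'\to\P^1$ appearing in the conclusion to be purely inseparable \emph{over $k$}, not merely after base change to $k\(t\)$, and the mechanism is that the perfect closure of $k\(t\)$ is $\bigcup_e k^{1/p^e}(t^{1/p^e})$ and the relevant Frobenius twists of $\P^1$ are defined over $k$ already because $\P^1$ has a canonical model over $\F_p$. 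I would also double-check the birational-invariance-of-Brauer-group input over the non-perfect field $k\(t\)$, but that is already handled in the proof of Proposition~\ref{prop:meat}.
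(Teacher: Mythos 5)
Your proposal takes a genuinely different route from the paper --- you try to apply Proposition~\ref{prop:meat} over the non-closed field $k\(t\)$ and then disentangle the inseparability by hand --- and the disentangling step contains real errors. First, it is not true that every purely inseparable finite cover of $\P^1_{k\(t\)}$ by a smooth proper $k\(t\)$-curve is dominated by a power of the \emph{relative} Frobenius over $k\(t\)$: for example the cover with function field $k\(t\)(x)\bigl[(tx)^{1/p}\bigr]$ is a smooth rational curve over $k\(t\)$ mapping purely inseparably to $\P^1_{k\(t\)}$, but it is not contained in any $k\(t\)(x^{1/p^e})$, since that would force $t^{1/p}\in k\(t\)(x^{1/p^e})$. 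Only the \emph{absolute} Frobenius dominates all such covers, and it involves the extension $k\(t\)^{1/p^e}$ of the base field. Second, that extension is purely inseparable, not ``Artin--Schreier-type,'' so it cannot be absorbed into a separable extension of $k\(t\)$; and since $\alpha_\eta$ is $p$-torsion, it can very well die after restriction along a purely inseparable extension of $k\(t\)$ (kernels of such restrictions are $p$-power torsion) without any geometric consequence over $k$ --- this is exactly the Liedtke-type confusion between the $\P^1$-direction and the $t$-direction that the introduction flags, and your step~(ii) does not resolve it. Relatedly, your step~(i) is unjustified: a multisection over $\overline{k\(t\)}$ descends only to some finite extension $L/k\(t\)$, which cannot be ``taken separable after enlarging,'' and coming back down from $L$ to $k\(t\)$ has no general mechanism when $[L:k\(t\)]$ is divisible by $p$.

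The paper's proof avoids all of this by never trying to descend the multisection. It applies Proposition~\ref{prop:meat} over the algebraically closed field $\overline{k\(t\)}$, where the purely inseparable cover $C_\infty\to\P^1_{\overline{k\(t\)}}$ is forced to be $\P^1$ (genus $0$ plus Tsen) and, by the Stacks Project reference, is identified with a power of the relative Frobenius --- which is already defined over $k$ (indeed over $\F_p$). One then takes $Y$ to be the resolution over $k$ of the corresponding Frobenius pullback of $X$, notes that $Y\tensor\overline{k\(t\)}$ is still a resolution of the pullback over $\overline{k\(t\)}$, and --- this is the step entirely missing from your proposal --- descends the vanishing of the Brauer class from $\overline{k\(t\)}$ to $k\(t\)$ by showing that the restriction maps $\Br(X\tensor_k k\(t\))\to\Br(X\tensor_k\overline{k\(t\)})$ and $\Br(Y\tensor_k k\(t\))\to\Br(Y\tensor_k\overline{k\(t\)})$ are injective. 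That injectivity is not formal: it uses the Leray spectral sequence together with $\H^1(\Spec k\(t\),\Pic_X)=\H^1(\Spec k\(t\),\Pic_Y)=0$, which in turn rests on Lemma~\ref{lem:tfree} and Lemma~\ref{lem:negative thing} (the Picard schemes are finitely generated, free, constant). Without this descent-of-the-kernel argument, your outline cannot reach the stated conclusion about $\alpha_\eta$ over $k\(t\)$, so the proposal has a genuine gap rather than being an alternative proof.
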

\begin{proof}
	Suppose that $\mf X_{\infty}$ admits a purely inseparable multisection. By
	Proposition \ref{prop:meat}, there exists a smooth proper curve $C_{\infty}$
	and a finite purely inseparable morphism $\pi\colon
	C_{\infty}\to\P^1_{\overline{k\llparen t\rrparen }}$ such that for any resolution of the
	pullback of $X\tensor\overline{k\llparen t\rrparen }$ by $\pi$, the class $\alpha_{\infty}$
	is in the kernel of the pullback map. The curve $C_{\infty}$ has genus 0, so
	by Tsen's theorem is isomorphic to $\P^1_{\overline{k\llparen t\rrparen }}$. By
	\cite[\href{https://stacks.math.columbia.edu/tag/0CCZ}{Tag
	0CCZ}]{stacks-project}, we can find isomorphisms identifying $\pi$ with the
	$n$th relative Frobenius of $\P^1_{\overline{k\llparen t\rrparen }}$ over
	$\overline{k\llparen t\rrparen }$.
	
	Let $Y$ be a smooth resolution of the pullback of $X$ by the $n$th relative
	Frobenius of $\P^1$ over $k$. Consider the diagram
	$$
	\begin{tikzcd}
	\Br(X\tensor_k\overline{k\llparen t\rrparen })\ar[r] &
	\Br(Y\tensor_k\overline{k\llparen t\rrparen }) \\
	\Br(X\tensor_k{k\llparen t\rrparen })\ar[u]\ar[r] &
	\Br(Y\tensor_k{k\llparen t\rrparen })\ar[u]
	\end{tikzcd}
	$$
	By Lemma \ref{lem:negative thing} and Lemma \ref{lem:tfree}, we have that
	$\Pic_X$ and $\Pic_{Y}$ are both finitely generated free constant group
	schemes. Thus, $$\H^1(\Spec k\llparen t\rrparen ,\Pic_X)=0=\H^1(\Spec k\llparen t\rrparen , \Pic_{Y}),$$
	and it follows from the Leray spectral sequence that the vertical maps are
	injective. Note that $Y\tensor\overline{k\llparen t\rrparen }$ is a resolution of the
	pullback of $X\tensor\overline{k\llparen t\rrparen }$ along $\pi$ (since $Y$ and $X$ are
	defined over the algebraically closed field $k$, where regular and of finite
	type implies smooth), and therefore $\alpha_{\infty}$ is in the kernel of the
	upper horizontal map. It follows that $\alpha_{\eta}$ is in the kernel of the
	lower horizontal map, as desired.
\end{proof}

We come to our main result of this section.

\begin{thm}\label{thm:twistor}
	Let $X$ be an elliptic supersingular K3 surface over $k$. Suppose
	$$f:X\to\P^1$$ is a Jacobian elliptic fibration such that the map
	$\gamma_e(f)$ in \eqref{eq:cohomap} is injective for all $e\geq 1$. Suppose
	$$\alpha\in\Br(X\tensor k\llbracket t\rrbracket )$$ is a Brauer class such
	that
	$$\alpha_{X\tensor k[t]/(t^2)}\neq 0$$ but 
	$$\alpha_X=0.$$ If $\mf X\to\P^1_{k\llbracket t\rrbracket }$ is the
	Artin--Tate family of elliptic surfaces associated to $\alpha$ then the
	geometric generic fiber $\mf X_\infty\to\P^1_{\overline{k\llparen t\rrparen }}$ has no
	purely inseparable multisections.
\end{thm}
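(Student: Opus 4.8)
The plan is to combine Proposition~\ref{prop:cheese} with the cohomological consequence of $\gamma_e(f)$ being injective, using the key observation that the Brauer class restricted over $k[t]/(t^2)$ is detected by the associated tangent element in $\H^2(X,\ms O_X)$, together with the genericity built into the Artin--Tate family. First I would argue by contradiction: suppose $\mf X_\infty\to\P^1_{\overline{k\(t\)}}$ \emph{does} admit a purely inseparable multisection. By Proposition~\ref{prop:cheese}, there is a finite purely inseparable cover $C\to\P^1$ (which, as in the proof of Proposition~\ref{prop:cheese}, may be taken to be the $n$th relative Frobenius $F^n\colon\P^1\to\P^1$ for some $n$, after a Tsen's theorem argument) and a resolution $Y\to X\times_{\P^1}C$ such that $\alpha_\eta$ lies in the kernel of the pullback $\Br(X\tensor_k k\(t\))\to\Br(Y\tensor_k k\(t\))$. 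In the notation of Diagram~\eqref{eq:the situation room}, $Y$ is precisely (a model of) $Y_n$, so the pullback map on Brauer groups is governed by the corresponding pullback on coherent cohomology.

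Next I would pass from the $k\(t\)$-statement to a first-order statement. Since $\alpha\in\Br(X\tensor k\llbracket t\rrbracket)$ with $\alpha_X=0$, the class $\alpha$ lies in the formal (identity) component of the Brauer group, and its leading term over $k[t]/(t^2)$ is captured by a nonzero element $\bar\alpha\in\H^2(X,\ms O_X)$ (using the deformation-theoretic description of the formal Brauer group / the identification $\Br(X\tensor k[t]/(t^2))/\Br(X)\simeq\H^2(X,\ms O_X)$, which is the same identification used in Section~\ref{sec:main event} to talk about the ``tangent value" of $\alpha$). The pullback map $\Br(X\tensor_k k\(t\))\to\Br(Y\tensor_k k\(t\))$ is compatible, via the Artin--Tate/formal Brauer formalism, with the semilinear map $\gamma_n(f)\colon\H^2(X,\ms O_X)\to\H^2(Y_n,\ms O_{Y_n})$ on tangent spaces. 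Now here is the crucial point: because $\gamma_n(f)$ is injective (by hypothesis, via Lemma~\ref{lem:frob split} this follows from, or is at least analogous to, $f$ being Frobenius split; but in any case we are given injectivity of $\gamma_e(f)$ for all $e$), $\gamma_n(f)(\bar\alpha)\neq 0$. Hence the image of $\alpha$ in $\Br(Y_n\tensor k[t]/(t^2))$ is \emph{nonzero}, so in particular $\alpha_\eta=\alpha\tensor k\(t\)$ cannot become zero in $\Br(Y_n\tensor k\(t\))$: a nonzero first-order term survives, so the full class over $k\(t\)$ is nonzero. This contradicts the conclusion of Proposition~\ref{prop:cheese}, completing the proof.

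The step I expect to be the main obstacle is the compatibility in the middle paragraph: making precise that the pullback on Brauer groups $\Br(X\tensor k[t]/(t^2))\to\Br(Y_n\tensor k[t]/(t^2))$, when passed to the quotient by the constant part $\Br(X)\to\Br(Y_n)$ (which is zero on $\alpha$ since $\alpha_X=0$ and $Y_n$ is birational over the relevant resolutions), is genuinely identified with $\gamma_n(f)$ acting on $\H^2$. This uses the modular/deformation-theoretic description of the Artin--Tate isomorphism from \cite{1804.07282} (especially \cite[Proposition~4.3.16]{1804.07282}) to see that the universal formal Brauer class deforms compatibly along the Frobenius pullback, and that the first-order part of this deformation is exactly the coherent pullback on $\H^2(\ms O)$. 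One must also be slightly careful that $Y_n$ is regular (it is, being a resolution of a surface over a perfect field) so that its Brauer group injects into that of its generic fiber, and that replacing $X\times_{\P^1}C$ by any resolution does not change the $p$-primary Brauer group in dimension $2$ --- both of these are invoked already in Propositions~\ref{prop:meat} and~\ref{prop:cheese}. Finally, one should note that the hypothesis $\alpha_{X\tensor k[t]/(t^2)}\neq 0$ is exactly what guarantees $\bar\alpha\neq 0$, so the genericity discussion alluded to in the introduction is not needed for \emph{this} theorem --- it is needed only later, when one wants to know that such $\alpha$ arise for very general $X$.
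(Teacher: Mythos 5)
Your proposal is correct and follows essentially the same route as the paper's proof: reduce via Proposition~\ref{prop:cheese} (and the Stacks project identification of purely inseparable covers of $\P^1$ with Frobenius powers) to the resolutions $Y_e$, compare first-order terms through the functorial exact sequence $0\to\H^2(\cdot,\ms O)\to\Br(\cdot\tensor k[\eps])\to\Br(\cdot)\to 0$, and use the hypothesis that each $\gamma_e(f)$ is injective to see the class survives pullback. The one step to state explicitly is that ``a nonzero first-order term survives'' passes through $\Br(Y_e\tensor k\llbracket t\rrbracket)$: nonvanishing over $k[\eps]$ gives nonvanishing over $k\llbracket t\rrbracket$, and then regularity of $Y_e\tensor k\llbracket t\rrbracket$ (as you note, $Y_e$ is smooth over $k$) gives injectivity of $\Br(Y_e\tensor k\llbracket t\rrbracket)\to\Br(Y_e\tensor k\(t\))$, which is exactly how the paper concludes.
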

\begin{proof}
	Suppose $C\to\P^1$ is a purely inseparable cover. By
	\cite[\href{https://stacks.math.columbia.edu/tag/0CCZ}{Tag
	0CCZ}]{stacks-project}, we may assume that $C=\P^1$ and the map is the $e$th
	power of the Frobenius map. Let $Y=Y_e\to\P^1$ be the resolution studied in
	Section \ref{sec:frobenius split elliptic fibrations}. Consider the 
	kernel of the pullback map
	$$
	\Br(X\tensor k\llbracket t\rrbracket)\to\Br(Y\tensor k\llbracket t\rrbracket).
	$$
	Restricting to $k[\eps]:=k[t]/(t^2)$, we get a diagram
	$$
	\begin{tikzcd}
	0\ar[r] &\H^2(X,\ms O)\ar[r]\ar[d]&\Br(X\tensor k[\eps])\ar[r]\ar[d] &\Br(X)\ar[r]\ar[d] & 0\\
	0\ar[r] &\H^2(Y,\ms O)\ar[r]&\Br(Y\tensor k[\eps])\ar[r] &\Br(Y)\ar[r] & 0
	\end{tikzcd}
	$$
	in which the vertical arrows are the usual pullback maps. It follows from the
	hypotheses on $\alpha$ that the restriction
	$$\alpha_{k[\eps]}\in\Br(X\tensor k[\eps])$$ of $\alpha$ is the image of a
	generator for $\H^2(X,\ms O)$. Since each $\gamma_e(f)$ is injective, the left
	vertical map is injective, and we conclude that $\alpha_{Y\tensor k\llbracket
	t\rrbracket}$ is non-zero. Since $Y$ is smooth over $k$, we have that
	$Y\tensor k\llbracket t\rrbracket$ is regular, and thus 
	$$\Br(Y\tensor k\llbracket t\rrbracket)\to\Br(Y\tensor k\llparen t\rrparen )$$ is injective.

	Putting everything together, we conclude that $\alpha|_{Y_e\tensor k\llparen t\rrparen }\neq
	0$ for all $e$. By Proposition \ref{prop:cheese}, we see that the original
	fibration $\mf X_\infty\to\P^1_{\overline{k\llparen t\rrparen }}$ cannot have any purely
	inseparable multisections, as claimed.
\end{proof}

\begin{cor}\label{cor:twistor}
	Suppose $f:X\to\P^1$ is a Jacobian elliptic fibration on a supersingular K3
	surface such that for all $e>0$, the map $\gamma_e(f)$ of \eqref{eq:cohomap}
	is injective. Suppose $\alpha\in\Br(X\tensor k\llbracket t\rrbracket)$ is an
	algebraization of the universal formal Brauer class. If $\mf
	X\to\P^1_{k\llbracket t\rrbracket}$ is the Artin--Tate family associated to
	$\alpha$, then the geometric generic fibration $$\mf
	X_\infty\to\P^1_{\overline{k\llparen t\rrparen }}$$ does not admit any purely inseparable
	multisections.
\end{cor}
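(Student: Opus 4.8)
The plan is to deduce this directly from Theorem \ref{thm:twistor}, since the only gap between the two statements is the verification that a ``universal formal Brauer class'' on a supersingular K3 surface satisfies the tangent-vanishing hypotheses $\alpha_{X\tensor k[t]/(t^2)}\neq 0$ and $\alpha_X=0$ imposed in the theorem. First I would recall that the formal Brauer group $\widehat{\Br}_X$ of a supersingular K3 surface is $\widehat{\G}_a$, so its universal deformation is pro-represented by $k\llbracket t\rrbracket$ and the universal formal Brauer class $\widehat\alpha\in\widehat{\Br}_X(k\llbracket t\rrbracket)$ is by construction the element whose image in $\widehat{\Br}_X(k) = \Br(X)[p^\infty]$ is zero (it is the identity section at $t=0$) and whose image in $\widehat{\Br}_X(k[t]/(t^2)) = \H^2(X,\ms O_X)$ is a generator (this is exactly the defining property of a coordinate on the one-dimensional smooth formal group). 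An algebraization $\alpha\in\Br(X\tensor k\llbracket t\rrbracket)$ of $\widehat\alpha$, in the sense of \cite[Proposition 2.2.4]{1804.07282}, is then a genuine Brauer class reducing to $\widehat\alpha$ on each $\Spec k[t]/(t^n)$; in particular $\alpha_X = \widehat\alpha|_{t=0} = 0$ and $\alpha_{X\tensor k[t]/(t^2)} = \widehat\alpha|_{k[t]/(t^2)}$ is a generator of $\H^2(X,\ms O_X)$, hence nonzero.

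With these two facts in hand, the hypotheses of Theorem \ref{thm:twistor} are met verbatim: $f:X\to\P^1$ is assumed Jacobian with $\gamma_e(f)$ injective for all $e\geq 1$, and $\alpha$ has nonzero first-order part but trivial reduction at $t=0$. I would then simply invoke Theorem \ref{thm:twistor} to conclude that the geometric generic fiber $\mf X_\infty\to\P^1_{\overline{k\(t\)}}$ of the associated Artin--Tate family has no purely inseparable multisections. One should check that the ``associated family of elliptic K3 surfaces'' produced by the modular description \cite[Proposition 4.3.16]{1804.07282} for an algebraized universal Brauer class is the same Artin--Tate family $\mf X$ appearing in the theorem; this is immediate since both are built from $\alpha$ via the Artin--Tate isomorphism applied over $k\llbracket t\rrbracket\times\P^1$.

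The only genuinely substantive point — and the step I expect to be the main obstacle to write cleanly — is pinning down precisely what ``algebraization of the universal formal Brauer class'' means and confirming that such an algebraization exists and has the stated reduction behavior; this is where one leans on \cite[Proposition 2.2.4]{1804.07282} and on the supersingularity of $X$ (which forces $\widehat{\Br}_X\cong\widehat{\G}_a$ and in particular makes the formal Brauer group smooth and one-dimensional, so that a single coordinate $t$ gives the universal family). Everything else is bookkeeping: matching the two descriptions of the Artin--Tate family and quoting Theorem \ref{thm:twistor}. I would therefore keep the proof of the corollary to essentially one paragraph, citing \cite[\S 2.2, \S 4.3]{1804.07282} for the algebraization and reduction statements and Theorem \ref{thm:twistor} for the conclusion.
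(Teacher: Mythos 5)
Your proposal is correct and follows the same route as the paper: the paper's proof simply invokes Theorem \ref{thm:twistor} after noting that $\alpha_{X\tensor k[t]/(t^2)}\in\widehat{\Br}(k[t]/(t^2))=\H^2(X,\ms O_X)$ is a generator (with $\alpha_X=0$ implicit from the universal formal class restricting to the identity at $t=0$). Your additional discussion of the algebraization and the matching of the Artin--Tate families is just a more explicit spelling out of the same one-line argument.
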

\begin{proof}
	This follows immediately from Theorem \ref{thm:twistor}, using the fact that
	$$\alpha_{X\tensor k[t]/(t^2)}\in\widehat{\Br}(k[t]/(t^2))=\H^2(X,\ms O_X)$$ is
	a generator.
\end{proof}
Note that the surfaces of Example \ref{ex:shioda example1} and Example
\ref{ex:shioda example2} satisfy the assumptions of Corollary \ref{cor:twistor}.
Hence, we have shown in particular that there exists a non-Jacobian elliptic
supersingular K3 surface over the algebraically closed field $\overline{k\llparen t\rrparen}$
with no purely inseparable multisections. We next show that there exist such
surfaces over $k$.

\begin{lem}\label{lem:hom scheme thing}
	Let $X\subset\P^n$ be a projective scheme of finite presentation over a scheme
	$U$ that admits a map $X\to\P^1$. Fix a morphism $f:\P^1\to\P^1$. For a fixed
	integer $e$, let $V_e^f\subset\Hom(\P^1_U,X)$ be the closed subscheme
	parametrizing morphisms $\P^1\to X$ such that the composition $\P^1\to
	X\to\P^1$ equals $f$ and the image $\P^1\to X\to\P^n$ has degree $e$. The
	scheme $V_e^f$ is of finite type over $U$.
\end{lem}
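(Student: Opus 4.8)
The plan is to realize $\Hom(\P^1_U,X)$ inside a Hilbert scheme and then observe that the conditions cutting out $V_e^f$ pin down the relevant numerical invariants up to finitely many possibilities. Since $X$ is projective over $U$ and $\P^1_U$ is flat and projective over $U$, Grothendieck's representability theorem for Hom-schemes (reducing to the Noetherian case by a standard limit argument, should $U$ fail to be Noetherian) exhibits $\Hom(\P^1_U,X)$ as an open subscheme of the Hilbert scheme $\operatorname{Hilb}_{\P^1_U\times_U X/U}$: it is the open locus whose $T$-points are the graphs $\Gamma_g\subset\P^1_T\times_T X_T$ of $T$-morphisms $g\colon\P^1_T\to X_T$, equivalently the closed subschemes whose first projection to $\P^1_T$ is an isomorphism. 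Embedding $X\subset\P^n$ one gets $\P^1_U\times_U X\subset\P^1_U\times_U\P^n_U$, and the Hilbert scheme decomposes as a disjoint union $\bigsqcup_P\operatorname{Hilb}^P$ over Hilbert polynomials $P$ taken with respect to $\ms O(1,1)$, each $\operatorname{Hilb}^P$ being projective — in particular of finite type — over $U$.

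First I would reduce the claim to showing that $V_e^f$ meets only finitely many of the components $\operatorname{Hilb}^P$. Each $\Hom(\P^1_U,X)\cap\operatorname{Hilb}^P$ is an open subscheme of the finite-type $U$-scheme $\operatorname{Hilb}^P$, hence of finite type over $U$; a finite union $W_S:=\bigsqcup_{P\in S}\bigl(\Hom(\P^1_U,X)\cap\operatorname{Hilb}^P\bigr)$ of these is therefore of finite type over $U$ and open in $\Hom(\P^1_U,X)$. Since $V_e^f$ is closed in $\Hom(\P^1_U,X)$ by hypothesis and will be seen to be contained in such a $W_S$, it is closed in $W_S$, hence of finite type over $U$.

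To bound the Hilbert polynomials that can occur, I would take a morphism $g\colon\P^1\to X$ classified by a point of $V_e^f$ and look at its graph $\Gamma_g\subset\P^1\times X\subset\P^1\times\P^n$. The first projection identifies $\Gamma_g$ with $\P^1$, and under this identification $\ms O_{\P^1\times\P^n}(1,1)|_{\Gamma_g}\cong\ms O_{\P^1}(1)\otimes g^\ast\ms O_{\P^n}(1)$ is a line bundle of degree $1+d$, where $d:=\deg g^\ast\ms O_{\P^n}(1)$; hence $\Gamma_g$ has Hilbert polynomial $P_d(t)=(1+d)t+1$, determined by the single integer $d$. It then remains to see that the defining conditions of $V_e^f$ force $d$ into a finite set. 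If "the image has degree $e$" is read as $d=e$, this is immediate. If instead it means that the scheme-theoretic image $Z\subset\P^n$ of $g$ has degree $e$, then $g$ factors as a surjection $\P^1\twoheadrightarrow Z$ (necessarily finite) followed by $Z\hookrightarrow\P^n$, and the identity $\pi\circ g=f$ (with $\pi\colon X\to\P^1$) forces $\deg(\P^1\to Z)$ to divide $\deg f$ on function fields; thus $d=\deg(\P^1\to Z)\cdot\deg Z$ lies in $\{e,2e,\ldots,(\deg f)\,e\}$ (the degenerate case of constant $g$ producing only the polynomial $t+1$). Either way $d$, and with it the Hilbert polynomial of $\Gamma_g$, takes only finitely many values on $V_e^f$.

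The step most likely to require care is the input that $\Hom(\P^1_U,X)$ is an open subscheme of the Hilbert scheme in the stated finitely-presented, not-necessarily-Noetherian generality, and pinning down precisely what ``degree $e$'' is meant to signify; beyond that, everything is elementary bookkeeping with Hilbert polynomials, and I do not anticipate a genuine obstacle.
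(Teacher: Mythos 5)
Your proof is correct, but it takes a genuinely different route from the paper's. The paper never mentions Hilbert schemes of graphs: it first observes that the condition ``composition with $X\to\P^1$ equals $f$'' is closed (by separatedness of $\P^1$), so it can be discarded, then uses the closed immersion $\Hom(\P^1_U,X)\subset\Hom(\P^1_U,\P^n_U)$ to reduce to the absolute scheme $\Hom_e(\P^1,\P^n)$, which by the universal property of projective space is the $\Quot$-scheme of quotients $\ms O^{n+1}\to\ms O(e)$ on $\P^1$, known to be of finite type. Your argument instead realizes the Hom scheme inside $\operatorname{Hilb}_{\P^1_U\times_U X/U}$ via graphs and bounds the $\ms O(1,1)$-Hilbert polynomial $P_d(t)=(1+d)t+1$. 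The paper's reduction is shorter and pins the statement to one explicit $\Quot$-scheme, and it makes clear how ``degree $e$'' is being read there: as $\deg g^\ast\ms O_{\P^n}(1)=e$, i.e.\ the case you dispose of immediately. Your approach buys a little extra generality: it also covers the alternative reading where $e$ is the degree of the scheme-theoretic image, via the divisibility argument using $\pi\circ g=f$ --- though note that this alternative case tacitly requires $f$ non-constant (for constant $f$ and that reading, the degree of the map to the image is unbounded and the statement itself would fail), which is harmless both because the paper's reading is the map-degree one and because the applications take $f$ to be a Frobenius power. With that caveat noted, your bookkeeping (finitely many Hilbert polynomials, each $\operatorname{Hilb}^P$ projective over $U$, $V_e^f$ closed in the resulting finite-type open) is sound.
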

\begin{proof}
	Since $\P^1$ is separated, the condition that the composition $\P^1\to
	X\to\P^1$ equal $f$ is closed. Thus it suffices to show that the scheme
	$\Hom_e(\P^1_U,X)$ parametrizing maps of degree $e$ is of finite type. Since
	$\Hom(\P^1_U,X)\subset\Hom(\P^1_U,\P^n_U)$ is a closed immersion, it suffices
	to show the same thing for
	$$\Hom_e(\P^1_U,\P^n_U)=\Hom_e(\P^1,\P^n)\times U,$$ whence it suffices to
	show it for the absolute scheme $\Hom_e(\P^1,\P^n)$. By the universal property
	of projective space, this is given by the scheme of quotients $\ms
	O^{n+1}\to\ms O(e)$ on $\P^1$. This $\Quot$-scheme is of finite type by
	\cite[\href{https://stacks.math.columbia.edu/tag/0DPA}{Tag
	0DPA}]{stacks-project}.
\end{proof}

\begin{cor}\label{cor:they exist!}
	If $k$ is uncountable, then there exists a supersingular K3 surface $X$ over $k$ and a non-Jacobian
	elliptic fibration $f:X\to\P^1$ with no purely inseparable multisections.
\end{cor}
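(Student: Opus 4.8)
The plan is to obtain $X$ as a very general member of the Artin--Tate family attached to a fixed Jacobian elliptic supersingular K3 surface, using Lemma \ref{lem:hom scheme thing} together with the uncountability of $k$ to propagate the absence of purely inseparable multisections from the geometric generic fiber --- where it is guaranteed by Corollary \ref{cor:twistor} --- down to a $k$-rational member of the family.

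First I would fix a Jacobian elliptic supersingular K3 surface $f_0\colon X_0\to\P^1$ over $k$ for which $\gamma_e(f_0)$ is injective for every $e\ge 1$: for instance one of the Shioda surfaces of Example \ref{ex:shioda example1} or Example \ref{ex:shioda example2}, which are $\infty$-Frobenius split by Proposition \ref{prop:cohoinj2} and hence have this property by Lemma \ref{lem:frob split} (by Proposition \ref{prop:cohoinj2} it is enough to have a Jacobian elliptic supersingular K3 surface with at most two additive singular fibers). Let $\alpha\in\Br(X_0\otimes_k k[t])$ be an algebraization of the universal formal Brauer class, so that $\alpha|_{t=0}=0$ while $\alpha_{X_0\otimes k[t]/(t^2)}$ is a generator of $\H^2(X_0,\ms O_{X_0})$, and let $\mf X\to\P^1\times\A^1_k$ be the associated Artin--Tate family of elliptic surfaces --- the $\A^1$-version of the family appearing in Corollary \ref{cor:twistor}, with $\mf X_0\cong X_0$. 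By \cite{1804.07282} there is a dense open $U\subseteq\A^1_k$ over which $\mf X_a$ is a supersingular K3 surface; shrinking $U$, I may also assume $\mf X|_U\to U$ is projective and fix a closed immersion $\mf X|_U\hookrightarrow\P^N_U$ over $U$. Write $\eta$ for the generic point of $U$.

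The map $\Spec k\llbracket t\rrbracket\to\A^1_k$ classifying $t$ sends the generic point to $\eta$, and the Artin--Tate construction is compatible with this base change, so along a suitable embedding $\overline{k(t)}\hookrightarrow\overline{k\(t\)}$ the geometric generic fiber of the $k\llbracket t\rrbracket$-Artin--Tate family --- which has no purely inseparable multisection by Corollary \ref{cor:twistor} --- is obtained from $\mf X_{\overline{k(t)}}$ by base change. Since a purely inseparable multisection of $\mf X_{\overline{k(t)}}$ would survive base change (its normalization is a geometrically integral genus-zero curve whose projection to $\P^1$ is purely inseparable, hence unchanged), $\mf X_{\overline{k(t)}}\to\P^1_{\overline{k(t)}}$ has no purely inseparable multisection either. (Alternatively one reruns the proof of Theorem \ref{thm:twistor} with $k\llbracket t\rrbracket$ replaced by $k[t]$, which is legitimate because $\Pic$ of a supersingular K3 is a finitely generated free constant group scheme, so $\H^1(\Spec k(t),\Pic)=0$, exactly as in Proposition \ref{prop:cheese}.) Now I would detect multisections via $\Hom$-schemes: for $a\in U(k)$, the fibration $\mf X_a\to\P^1$ admits a purely inseparable multisection (including possibly a section) if and only if there are integers $e\ge 0$ and $d\ge 1$ and a non-constant morphism $\nu\colon\P^1\to\mf X_a$ with $\pr\circ\nu=\rho^{(p^e)}\colon\P^1\to\P^1$ whose image in $\P^N$ has degree $d$. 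Indeed, the image of such a $\nu$ is an integral curve, finite flat over $\P^1$, with purely inseparable projection; conversely, given a purely inseparable multisection $\Sigma$, its normalization is isomorphic to $\P^1$ (as $k$ is perfect, $k(\Sigma)$ is a purely inseparable, hence purely transcendental, extension of $k(t)$), the induced map $\P^1\to\Sigma\to\P^1$ is purely inseparable of some degree $p^e$, and by \cite[Tag 0CCZ]{stacks-project} together with the perfectness of $k$ (as in the proof of Theorem \ref{thm:twistor}) one may precompose with an automorphism of the source $\P^1$ so that this composite becomes $\rho^{(p^e)}$. By Lemma \ref{lem:hom scheme thing} (applied with this $U$, with $\mf X|_U$ and its projection to $\P^1$, and with $\rho^{(p^e)}$ as the fixed self-map of $\P^1$), the scheme of such $\nu$ is of finite type over $U$; let $T_{e,d}\subseteq U$ be its image, a constructible subset by Chevalley's theorem. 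Moreover $T_{e,d}$ omits $\eta$: a point of this scheme over $\eta$ would, after base change to $\overline{k(t)}$, exhibit a purely inseparable multisection of $\mf X_{\overline{k(t)}}$, a contradiction. A constructible subset of the irreducible scheme $U$ not containing $\eta$ is a finite set of closed points, so each $T_{e,d}$ is finite.

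Finally, since $k$ is uncountable and $U$ is a dense open of $\A^1_k$, the set $U(k)$ is uncountable, so we may choose $a\in U(k)$ lying in none of the countably many finite sets $T_{e,d}$. Then $X:=\mf X_a$ is a supersingular K3 surface over $k$, the fibration $\mf X_a\to\P^1$ has no purely inseparable multisection, and in particular it has no section, so it is non-Jacobian; this is the desired example. The step I expect to be the main obstacle is the detection criterion above: one must be sure that the existence of a purely inseparable multisection on a fiber is genuinely captured by the non-emptiness of one of the countably many finite-type $U$-schemes of morphisms over $\rho^{(p^e)}$ --- in particular that automorphisms of the target $\P^1$ can be absorbed into the source using the perfectness of $k$ --- and throughout one relies on \cite{1804.07282} to know that the fibers of the Artin--Tate family are honest supersingular K3 surfaces.
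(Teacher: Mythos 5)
Your proposal is correct and follows essentially the same route as the paper: start from an $\infty$-Frobenius-split Shioda example, spread the universal formal Brauer class out to an Artin--Tate family over $\A^1$, use Lemma \ref{lem:hom scheme thing} and Corollary \ref{cor:twistor} to see that each Hom scheme of candidate multisections has finite image in the base, and use uncountability of $k$ to pick a fiber avoiding all of them. The extra details you supply (the bridge from $\overline{k\(t\)}$ to $\overline{k(t)}$ and the absorption of target automorphisms of $\P^1$ into the source) are exactly the steps the paper leaves implicit, and they check out.
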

\begin{proof}
	Suppose that $f:X\to\P^1$ is an elliptic supersingular K3 surface such that
	$f$ is $\infty$-Frobenius split (see for instance Example \ref{ex:shioda
	example1} and Example \ref{ex:shioda example2}). As shown in Section 4.3 of
	\cite{1804.07282}, there exists a class $\alpha'\in\H^2(X\times\A^1,\m_p)$
	which restricts to a class over $X\otimes k[[t]]$ whose associated Brauer
	class is the universal Brauer class. Moreover, there is a corresponding
	elliptic surface $\widetilde{\mf X}\to \P^1_{\A^1}$ over $\A^1$.
	
	Let $\ms O(1)$ be a relative polarization of
	$\widetilde{\mf X}$ over $\A^1$. Given a positive integer $e$, let
	$H_{e,b}\subset\Hom_{\A^1}(\P^1,\widetilde{\mf X})$ be the Hom scheme
	parametrizing morphisms whose composition $\P^1\to\widetilde{\mf X}\to\P^1$ is
	the $e$th relative Frobenius and whose image has $\ms O(1)$-degree at most $b$
	in each fiber. By Lemma \ref{lem:hom scheme thing}, $H_{e,b}\to \A^1$ is of
	finite type. By Corollary \ref{cor:twistor}, the image of $H_{e,b}$ does not
	contain the generic point of $\A^1$, and hence the image is finite. Applying
	this for all $e$ and $b$, we see that the fiber of $\widetilde{\mf X}$ over a very
	general point $k$-point of $\A^1$ is a non-Jacobian elliptic supersingular K3
	surface that does not admit a purely inseparable multisection.
\end{proof}

We will show in Section \ref{sec:very general} that in fact a very general
supersingular K3 surface has the property that every non-Jacobian elliptic
fibration has no purely inseparable multisections.

As a positive result, we note the following.
\begin{prop}\label{prop:isotrivial pi}
	If $f:X\to\P^1$ is an isotrivial Jacobian elliptic fibration on a supersingular K3 surface with supersingular generic fiber, then
	\begin{enumerate}
		\item any form of $f$ admits a purely inseparable multisection of degree
		$p^2$, and
		\item $f$ is not $e$-Frobenius split for any $e\geq 2$.
	\end{enumerate} 
\end{prop}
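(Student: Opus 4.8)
The plan is to prove the two parts separately, using the structure theory of isotrivial elliptic surfaces with supersingular generic fiber. First I would analyze what such an $f$ looks like. Since the generic fiber $E/k(t)$ is supersingular (so $j(E)=0$, as $p\geq 5$ forces supersingular elliptic curves to have $j=0$) and isotrivial, after a base change $E$ becomes constant; the minimal such base change is a cyclic cover of degree dividing $6$, but since $X$ is a K3 surface the only possibility compatible with the Kodaira fiber types and the canonical bundle formula is that $f$ acquires good reduction after pulling back by $\rho^{(6)}:\P^1\to\P^1$ (or a degree $3$ or $2$ cover, depending on fiber types), all additive fibers being of type $\II,\II^\ast$ or $\IV,\IV^\ast$. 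The key point for (1): pulling back by the $p$-power Frobenius multiplies discriminant valuations by $p^2\equiv1\pmod{12}$ but, crucially, does \emph{not} change the $j$-invariant (it is pulled back to $j(E)^{p^2}=0$), and after pulling back by $F^2:\P^1\to\P^1$ the surface $X^{(2)}$ has a minimal resolution $Y_2$ whose generic fiber is the Frobenius twist $E^{(p^2)}$. I would argue that since $E$ is supersingular, the $p^2$-power relative Frobenius $E\to E^{(p^2)}$ is defined over $k(t)$ (the Verschiebung/Frobenius on a supersingular elliptic curve over a perfect-ish field), and pulling this back along any form of $f$ supplies a purely inseparable multisection: concretely, a form $X_\alpha$ corresponds to a torsor, and the $p^2$-Frobenius pullback $Y_2$ of its Jacobian $X$ is Jacobian (the torsor becomes trivial under $p^2$-Frobenius because a supersingular torsor has index $p$, cf.\ the index-$p$ discussion in Section \ref{sec:main event}, hence is killed by $p^2$-Frobenius). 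A section of $Y_2\to\P^1$ pushes forward to a purely inseparable multisection of $X_\alpha$ of degree $p^2$.

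For part (2), I would apply Proposition \ref{prop:cohoinj}: $f$ is $2e$-Frobenius split if and only if $\frac{p^{2e}-1}{12}\FT(f)$ is $p^{2e}$-split. The divisor $\FT(f)$ is, up to automorphism of $\P^1$, supported at the ramification points of the cyclic base change that trivializes $E$, so it is represented by a polynomial of the form $\prod_i (t-z_i s)^{\lambda_i}$ with all $\lambda_i$ equal (to $2$ if the fibers are $\II$, after adjusting $\I_n^\ast$ conventions) and $\sum\lambda_i = 12$ (the Euler characteristic constraint for a K3), with $n = 6$ terms in the type-$\II$ case. Then $\frac{p^{2e}-1}{12}\FT(f)$ is represented by $\prod_i(t-z_is)^{(p^{2e}-1)/6}$, and I would show every term of this product is divisible by $s^{p^{2e}}$ or $t^{p^{2e}}$ — exactly the computation carried out in Example \ref{ex:shioda example3}, where $P(s,t)=s^2(s^{11}+t^{11})^2$ has the same shape. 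Indeed in the isotrivial supersingular K3 case $\FT(f)$ is (a form of) $s^2\prod(t^6-z_is^6)$-type or the $11$th-root configuration of Example \ref{ex:shioda example3}, so the same divisibility argument shows $\frac{p^{2e}-1}{12}\FT(f)$ is $p^{2e}$-split, hence $f$ is not $2e$-Frobenius split, and by Lemma \ref{lem:level lowering} not $e$-Frobenius split for any $e\geq 2$.

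The main obstacle I expect is pinning down part (1) rigorously: showing that the purely inseparable multisection of degree exactly $p^2$ exists on \emph{every} form of $f$, not just on $f$ itself. This requires identifying the $p^2$-Frobenius pullback $Y_2$ of the Jacobian $X$ precisely (its generic fiber is $E^{(p^2)}\cong E$ since $E$ is supersingular with $j=0$, so $Y_2$ is again a supersingular K3 with a section), and then arguing that the Brauer class $\alpha$ defining any form dies in $\Br(Y_2)$ — equivalently that the torsor $X_\alpha$ acquires a section over $k(t)^{1/p^2}$. This is where I would need the supersingularity of $E$ to guarantee that the torsor $X_{\alpha,k(t)}$, which has index dividing $p$, is split by the purely inseparable extension $k(t)\subset k(t)^{1/p}$ or $k(t)^{1/p^2}$: one can see this because $\Br(k(t))[p]$ for the function field maps, via the Artin--Tate identification, to a group killed by pullback along the relevant Frobenius, the relevant cohomological input being that $\H^1(\P^1,\Pic^\circ_{X/\P^1})$ is a $p$-group annihilated by $p$-power Frobenius pullback precisely because the fibration is isotrivial supersingular. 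Once that is established, the rest is bookkeeping with the resolution diagram \eqref{eq:the situation room}.
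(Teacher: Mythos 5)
Your part (1) has the right overall shape, but the step you yourself flag as the main obstacle is exactly the content of the paper's argument, and your proposed substitute for it is circular: ``a supersingular torsor has index $p$, hence is killed by $p^2$-Frobenius'' is an assertion, not a proof (period $p$ alone does not imply splitting after purely inseparable base change -- for an \emph{ordinary} generic fiber it is false, since $E[p]$ then has an \'etale part). The mechanism the paper uses is: since $X$ is a supersingular K3, its Brauer group is $p$-torsion, so the class of any form lifts along the multiplication-by-$p$ sequence to a torsor under the $p$-torsion group scheme $X[p]$ of the smooth locus; because the generic fiber is supersingular, $X[p]\to\P^1$ is infinitesimal, i.e.\ purely inseparable of degree $p^2$, and the induced map from the $X[p]$-torsor to the form is the desired purely inseparable multisection. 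Your closing claim that ``$\H^1(\P^1,\Pic^\circ_{X/\P^1})$ is a $p$-group annihilated by $p$-power Frobenius pullback precisely because the fibration is isotrivial supersingular'' is left unproved and is, again, the whole point; note also that your opening structural claim (supersingularity forces $j=0$ when $p\geq 5$) is simply false ($j=0$ is supersingular only when $p\equiv 2\pmod 3$), though this does not matter for (1).

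Part (2) is where the proposal genuinely breaks. The paper deduces (2) directly from (1): since every form -- including the infinitesimal ones making up the formal Brauer group -- trivializes after the degree-$p^2$ purely inseparable base change, the pullback $\H^2(X,\ms O_X)\to\H^2(Y_2,\ms O_{Y_2})$ is zero, so by Lemma \ref{lem:frob split} the fibration is not $2$-Frobenius split, and Lemma \ref{lem:level lowering} gives the statement for all $e\geq 2$. Your direct computation via Proposition \ref{prop:cohoinj} cannot work as written. First, the structural claims are wrong: the additive fibers need not be of type $\II$ with equal multiplicities (for supersingular $j_0\neq 0,1728$ an isotrivial fibration is a quadratic twist with fibers of type $\I_0^\ast$), and $\FT(f)$ has total degree governed by $24$, not $12$. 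Second, you invert Proposition \ref{prop:cohoinj}: if the divisor \emph{is} $p^{2e}$-split then $f$ \emph{is} $2e$-Frobenius split; you need non-splitness. Third, and most seriously, your argument never uses the hypothesis that $X$ itself is supersingular, and that hypothesis is essential: take a constant supersingular curve with $j_0\neq 0,1728$ quadratically twisted by a double cover of $\P^1$ branched at four general points. This is isotrivial with supersingular generic fiber and four $\I_0^\ast$ fibers, and the relevant divisibility fails (the coefficient of $s^{p^{2e}-1}t^{p^{2e}-1}$ is essentially an iterated Hasse invariant of the branch curve and is nonzero for ordinary branch data), so such a fibration \emph{is} $2e$-Frobenius split -- it just lives on a non-supersingular K3. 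So the asserted divisibility ``as in Example \ref{ex:shioda example3}'' is not a consequence of isotriviality plus supersingular generic fiber; it encodes the supersingularity of $X$, which your computation never imports. Replacing the computation by the deduction of (2) from (1) is both necessary and much shorter.
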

\begin{proof}
	Let $g:X'\to\P^1$ be an elliptic K3 surface that is a form of $f$. The Brauer
	group of a supersingular K3 surface is $p$-torsion \cite[Theorem 4.3]{Artin74}, so $g$ admits a section
	after pullback along $X[p]\to\P^1$, where $X[p]$ is the $p$-torsion group
	scheme of the smooth locus of $f$. The generic fiber of $f$ is supersingular,
	so $X[p]\to\P^1$ is purely inseparable of degree $p^2$. It follows that the
	map $Y_{2}\to X$ induces the zero map on formal Brauer groups, and therefore
	the pullback $\H^2(X,\ms O_X)\to\H^2(Y_2,\ms O_{Y_2})$ vanishes. By Lemma
	\ref{lem:frob split}, the fibration $f$ is not $e$-Frobenius split for any
	$e\geq 2$.
\end{proof}
\begin{remark}
	In fact, we do not know of a single example of a non-isotrivial fibration on a
	supersingular K3 surface that is not Jacobian and admits a purely inseparable
	multisection.	
\end{remark}

\section{Moduli of Weierstrass data}
\label{sec:moduli}

In this section we recall some of the basic theory of Weierstrass fibrations and
their associated data. This is inspired by the discussion in \cite{MR615858}
(see also \cite{MR0417182}) and mainly serves to fix notation. Fix an
algebraically closed field $k$ of characteristic not equal to $2$ or $3$. For
technical reasons, we briefly describe how to give a relative construction over
$k$; this is purely linguistic.

\begin{defn}
  Fix a $k$-scheme $T$. A \emph{Weierstrass fibration parametrized by $T$\/} is a
  pair $(f:X\to Y,\sigma)$ with 
  \begin{enumerate}
    \item $Y\to T$ a flat morphism of schemes with integral geometric fibers;
    \item $f:X\to Y$ a proper flat morphism of schemes of finite presentation;
    \item $\sigma:Y\to X$ a section with image in the smooth locus of $f$, such
    that 
    \item each (pointed) geometric fiber of $f$ is an elliptic curve, a nodal
    rational curve, or a cuspidal rational curve. 
  \end{enumerate}

	If $Y$ is fixed, we will called $f$ a \emph{Weierstrass fibration over $Y$\/}.
	If $Y=Z\times T$ for a fixed $k$-scheme $Z$, we will call the pair a
	\emph{Weierstrass fibration over $Z$ parametrized by $T$\/}. (In particular, a
	Weierstrass fibration over $Z$ parametrized by $T$ is the same thing as a
	Weierstrass fibration over $Z\times T$.)
  
  An \emph{isomorphism\/} of Weierstrass fibrations is a commutative diagram of
  $T$-schemes 
  \begin{equation}\label{eq:w iso}
    \begin{tikzcd}
      X\ar[r,"\alpha"]\ar[d,"f"] & X'\ar[d, "f'"'] \\
      Y\ar[r,"\beta"]\ar[u,bend left, "\sigma"] & Y'\ar[u, bend right, "\sigma'"']
    \end{tikzcd}
  \end{equation}
  in which $\alpha$ and $\beta$ are isomorphisms. If $Y=Y'$ and $f$ and $f'$ are
  considered as Weierstrass fibrations over $Y$, then we require that
  $\beta=\id$ in \eqref{eq:w iso}.
\end{defn}

Given a Weierstrass fibration $(f:X\to Y,\sigma)$, we will write $S\subset X$
for the image of $\sigma$ and call this the \emph{zero section\/}.

\begin{notation}
  We will write $\WeierstrassFibrations_Y$ for the category of Weierstrass
  fibrations over $Y$. Given a fixed $k$-scheme $Z$, we will write
  $\WeierstrassFibrations(Z)$ for the fppf $k$-stack whose fiber over a scheme
  $T$ is the groupoid $\WeierstrassFibrations_{Z\times_k T}$. There is an open
  substack $\WeierstrassFibrations(Y)^\circ$ whose objects over $T$ are
  Weierstrass fibrations over $Y$ parametrized by $T$ such that for each
  geometric point $t\to T$, the induced fibration $X_t\to Y_t$ has a smooth
  fiber.
\end{notation}

\begin{prop}\label{prop:WF is an alg stack}
  If $Z$ is a proper $k$-scheme then the stack $\WeierstrassFibrations(Z)$ is a
  separated Artin stack with finite diagonal locally of finite type over $k$.
\end{prop}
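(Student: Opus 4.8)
The plan is to realize $\WeierstrassFibrations(Z)$ as a quotient stack of a scheme by an affine group, using the standard fact that (short) Weierstrass equations are classified by sections of line bundles and that isomorphisms between them act by rescaling. Since a Weierstrass fibration over $Y = Z \times_k T$ is determined (Zariski-locally on $Y$) by a Weierstrass equation $y^2 = x^3 + a x + b$, the essential geometric content is captured by a relative line bundle $\ms L$ on $Y$ together with global sections $a \in \H^0(Y, \ms L^{\otimes 4})$ and $b \in \H^0(Y, \ms L^{\otimes 6})$ with $4a^3 + 27 b^2$ not identically zero on any geometric fiber. The first step is to make this precise: I would show that the data $(f\colon X \to Y, \sigma)$ is equivalent to the data $(\ms L, a, b)$ up to the equivalence $(\ms L, a, b) \sim (\ms L, u^{-4} a, u^{-6} b)$ for $u \in \H^0(Y, \ms O_Y^\times)$ (and isomorphisms of $\ms L$). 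This is the relative version of the classical correspondence and is purely linguistic, as the authors say; one builds $X$ as a closed subscheme of $\P(\ms O_Y \oplus \ms L^{\otimes 2} \oplus \ms L^{\otimes 3})$ cut out by the Weierstrass cubic.

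The second step is to produce the algebraic stack. Since $Z$ is proper over $k$, for each $T$ the possible line bundles $\ms L$ on $Z \times_k T$ restricting fiberwise to a bundle of bounded degree on the fibers of $Z_t$ are parametrized by a locally finite-type open substack of $\Pic(Z)$ (which is algebraic because $Z$ is proper), and over a fixed $\ms L$ the pairs $(a,b)$ form a (relative) affine space, namely the total space of the coherent sheaf $p_{T*}(\ms L^{\otimes 4}) \oplus p_{T*}(\ms L^{\otimes 6})$; the discriminant condition is an open condition. The rescaling action is by the group $\G_m$ acting with weights $(-4, -6)$, plus the automorphisms of $\ms L$ already accounted for by working on $\Pic(Z)$. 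Thus $\WeierstrassFibrations(Z)$ is a $\G_m$-quotient of a scheme of finite type over an algebraic stack locally of finite type over $k$, hence algebraic and locally of finite type. The diagonal is then computed: an isomorphism between two Weierstrass fibrations over $Z \times_k T$ is an isomorphism $\beta$ of the bases (the identity when $Y$ is fixed) together with a unit $u$ matching the equations, so $\Isom$ is a closed subscheme of $\Aut(Z)_T \times \G_{m,T}$, which is affine and of finite type; in particular the diagonal is affine, separated, and of finite type. Separatedness of the stack follows because $\Aut(Z)$ is separated (as $Z$ is proper) and $\G_m$ is separated, and finiteness of the diagonal follows because the automorphism group of a single Weierstrass fibration is finite (the elliptic curve case gives at most the finite group of automorphisms fixing the origin, and the action of $u$ is then rigidified by the discriminant being nonzero).

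The main obstacle I anticipate is the bookkeeping around the line bundle $\ms L$: over a general base $T$ one cannot trivialize $\ms L$, so one must phrase everything in families over $\Pic(Z)$ and verify that the relevant pushforwards $p_{T*}(\ms L^{\otimes n})$ behave well in families (base change, coherence) — this uses properness of $Z$ but not flatness of $\ms L^{\otimes n}$, so one may need to stratify by the numerical type of $\ms L$ or pass to the locus where cohomology and base change applies, which is where ``locally of finite type'' rather than ``finite type'' enters. A secondary subtlety is checking that the discriminant's non-vanishing on each geometric fiber of $Y \to T$ (rather than on $Y$ itself) is an open condition on $T$; this follows from properness of $Y \to T$ and upper-semicontinuity, since the locus in $Y$ where the Weierstrass equation is singular is closed and its image in $T$ is closed. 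Once these points are in place, everything else is the formal machinery for quotient stacks. I would remark that $\WeierstrassFibrations(Z)^\circ$ is the open substack where this discriminant condition holds, consistent with the notation introduced just above the proposition.
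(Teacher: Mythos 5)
Your route to algebraicity and local finiteness is genuinely different from the paper's, and that part is workable: you go through the Weierstrass-data description $(\ms L,a,b)$ fibered over the Picard stack of $Z$, which is essentially how the paper later proves that $\WeierstrassData(Y)$ is an Artin stack, combined with the equivalence of Proposition \ref{prop:weierstuff}. The paper's own proof of this proposition instead first reduces to $Z=\Spec k$ via the Hom-stack theorem and presents $\WeierstrassFibrations(\Spec k)$ as a quotient $[M/G]$ of an open subscheme of plane cubics through $(0:1:0)$ by a subgroup $G\subset\GL_3$. Be aware, though, that the equivalence with Weierstrass data is not ``purely linguistic'': the paper devotes Proposition \ref{prop:weierstuff} to it (the classical references must be extended to non-reduced bases), and as you have set things up --- with the discriminant required to be not identically zero on geometric fibers --- you are describing $\WeierstrassFibrations(Z)^\circ$, i.e.\ $\WeierstrassData(Z)$, rather than the full stack named in the statement.

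The genuine gap is in ``separated with finite diagonal.'' What you verify is that the Isom sheaves are closed subschemes of an affine group scheme of finite type (so the diagonal is affine and of finite type) and that automorphism groups of geometric points are finite (so the diagonal is quasi-finite). But finiteness of the diagonal also requires properness, and separatedness of the stack \emph{means} properness of the diagonal; neither follows from separatedness of $\G_m$ or $\Aut(Z)$, which only says the Isom schemes are themselves separated. The missing step is the valuative criterion: given two Weierstrass fibrations over $Z\times\Spec R$ with $R$ a dvr and an isomorphism of their restrictions over the fraction field, you must extend it over $R$. This is exactly what the paper's proof supplies --- it conjugates the two Weierstrass equations over the generic point and invokes uniqueness of minimal Weierstrass models over the dvr, after which the bound of $6$ on fiberwise automorphisms yields finiteness --- and nothing in your proposal plays this role. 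Two further corrections: isomorphisms in $\WeierstrassFibrations(Z)$ fix the base $Z\times T$ by definition, so $\Aut(Z)$ should not enter at all (if it did, finiteness would already fail for $Z=\P^1$, since $\PGL_2$ is positive-dimensional); and the relevant unit group is that of $\H^0(Z,\ms O_Z)\otimes_k\ms O(T)$, not just $\G_m(T)$, as $Z$ is only assumed proper. Finally, note that your discriminant hypothesis is precisely what makes the finite-stabilizer claim true: without it the constant cuspidal fibration $a=b=0$ has $\G_m$ inside its automorphism group, so being precise about which stack you are proving the statement for is not a cosmetic point.
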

\begin{proof}
	It follows from the various definitions that there is a canonical isomorphism
  $\WeierstrassFibrations(Z)=\Hom(Z,\WeierstrassFibrations(\Spec k))$. Applying
  \cite[Theorem 1.1]{MR2239345}, we see that it suffices to establish the result
  for $Z=\Spec k$. We will write $\ms M:=\WeierstrassFibrations(\Spec k)$ for
  the sake of notational simplicity.

  Given a family $\pi:X\to T, \sigma:T\to X$ in $\ms M$, cohomology and base
  change tells us that $\pi_\ast\ms O(3\sigma(T))$ is a locally free sheaf of
  rank $3$, and that the natural morphism $X\to\Proj(\Sym^\ast\pi_\ast\ms
  O(3\sigma(T)))$ is a closed immersion. Let $G\subset\GL_3$ be the subgroup
  whose image in $\PGL_2$ is the stabilizer of the point $(0:1:0)$. We can
  construct a natural $G$-torsor $M\to\ms M$ whose fiber over $(\pi,\sigma)$ is
  the scheme of isomorphisms $\pi_\ast\ms O(3S)\to\ms O^{\oplus 3}$ with the
  property that the composition $$\ms O\to\pi_\ast\ms O(3S)\to\ms O^{\oplus 3}$$
  lands in the span of $(0,1,0)$, where the first map above is the adjoint of
  the map $\ms O_X\to\ms O_X(3S)$ corresponding to the divisor $3S$. The
  $G$-torsor $M$ admits an open immersion into the space of cubic curves in
  $\P^2$ passing through $(0:1:0)$. 
  
  Consider the diagonal of $\ms M$. By well-known results (for example,
  \cite[Paragraph 4.c]{MR1611822}), we know that $\Delta:\ms M\to\ms M\times\ms
  M$ is representable by schemes of finite presentation. We claim that $\Delta$ is
  finite. To see this, it suffices to work with two families $\pi:X\to
  T,\sigma:T\to X$ and $\pi':X'\to T, \sigma':T\to X$ with $T$ the spectrum of a
  dvr. The divisors $\ms O(3S)$ and $\ms O(3S')$ define embeddings
  $X,X'\inj\P^2_T$. An isomorphism of the generic fibers over $T$ gives a
  pointed isomorphism, which gives a change of coordinates that conjugates the
  Weierstrass equations of $X_K$ and $X'_K$. Since the minimal Weierstrass
  models over $T$ are unique, this shows that the isomorphism extends uniquely
  to an isomorphism $X\to X'$, giving properness of $\Delta$. On the other hand,
  we know that the automorphism group of any fiber has size at most $6$, when we
  conclude that $\Delta$ is finite.

  It follows that we can identify $\ms M$ with the quotient stack $[M/G]$, which
  has finite diagonal. This shows the desired result. (One could also prove this
  result using Artin's Representability Theorem, and checking various
  deformation-theoretic conditions, but this proof given here is more concrete
  and informative.)
\end{proof}

Now we define what we need in order to globalize the classical Weierstrass
equations of elliptic curves and their models.

\begin{defn}
  Fix a $k$-scheme $T$. A \emph{Weierstrass datum parametrized by $T$\/} is a
  triple $(L, a, b)$ with $L$ an invertible sheaf on $T$,
  $a\in\Gamma(T,L^{\tensor -4})$, and $b\in\Gamma(T, L^{\tensor -6})$. The
  section 
  $$\Delta:=4a^{\tensor 3}+27b^{\tensor 2}\in\Gamma(T, L^{\tensor -12})$$ is
  called the \emph{discriminant\/} of $(L,a,b)$. An \emph{isomorphism\/}
  $(L,a,b)\to (L',a',b')$ of Weierstrass data is given by an isomorphism
  $\phi:L\to L'$ of invertible sheaves such that $\phi^{\tensor -4}(a)=a'$ and
  $\phi^{\tensor -6}(b)=b'$. 
\end{defn}

\begin{notation}
  We will write $\WeierstrassData_T$ for the groupoid of Weierstrass data
  parametrized by $T$. Given an integral $k$-scheme $Y$ of finite type, we will
  write $\WeierstrassData(Y)$ for the fppf stack whose fiber over a $k$-scheme
  $T$ is the subgroupoid $\WeierstrassData_{Y\times_k
  T}^\circ\subset\WeierstrassData_{Y\times_k T}$ consisting of those Weierstrass
  data such that the discriminant $\Delta$ is not identically zero in any
  geometric fiber over $T$.
\end{notation}

\begin{prop}
  If $Y$ is proper over $k$ then $\WeierstrassData(Y)$ is an Artin stack locally
  of finite type over $k$.
\end{prop}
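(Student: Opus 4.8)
The plan is to mimic the proof of Proposition~\ref{prop:WF is an alg stack}: first realize $\WeierstrassData(Y)$ as an open substack of a Hom-stack $\Hom_k(Y,\mathcal N)$, where $\mathcal N$ is the $k$-stack of (unrestricted) Weierstrass data over a point, and then quote the representability theorem of Olsson used there. Concretely, let $\mathcal N$ be the fppf stack whose groupoid of $W$-points is $\WeierstrassData_W$, i.e. triples $(L,a,b)$ with $L$ invertible on $W$, $a\in\Gamma(W,L^{\tensor -4})$ and $b\in\Gamma(W,L^{\tensor -6})$, with \emph{no} condition imposed on the discriminant. Unwinding the descent datum, $\mathcal N$ is canonically the quotient stack $[\A^2_{a,b}/\G_m]$ for the linear $\G_m$-action on $\A^2=\Spec k[a,b]$ in which $a$ and $b$ have weights $-4$ and $-6$; being the quotient of an affine $k$-scheme of finite type by an affine algebraic group, $\mathcal N$ is an Artin stack of finite type over $k$ with affine diagonal. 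For every $k$-scheme $T$ there is a tautological identification $\Hom_k(Y,\mathcal N)(T)=\mathcal N(Y\times_k T)=\WeierstrassData_{Y\times_k T}$.

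Since $Y$ is proper over $k$ it is proper, flat (automatically, as $k$ is a field), and of finite presentation over $\Spec k$. I would then apply \cite[Theorem~1.1]{MR2239345}, exactly as in the proof of Proposition~\ref{prop:WF is an alg stack}, to conclude that $\mls H:=\Hom_k(Y,\mathcal N)$ is an Artin stack locally of finite type over $k$ (with quasi-compact separated diagonal, inherited from the affine diagonal of $\mathcal N$). Under the identification of the previous paragraph, $\WeierstrassData(Y)$ is the full sub-groupoid of $\mls H$ cut out, over each base $T$, by the condition that the discriminant $\Delta=4a^{\tensor 3}+27b^{\tensor 2}$ not vanish identically on any geometric fiber of $p\colon Y\times_k T\to T$; granting that this defines an open substack, we are done, because an open substack of an Artin stack locally of finite type over $k$ is again one.

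The only real point, and the step I expect to be the (mild) obstacle, is the openness of this condition. I would check it on $T$-points: fix a map $T\to\mls H$, i.e. a Weierstrass datum $(L,a,b)$ on $Y\times_k T$, and let $U\subset Y\times_k T$ be the open locus where $\Delta\neq 0$. The morphism $p\colon Y\times_k T\to T$, being the base change of the flat finitely presented morphism $Y\to\Spec k$, is flat and locally of finite presentation, hence an open map. Because $Y$ is integral over the algebraically closed field $k$, it is geometrically reduced, so for every geometric point $\bar t$ of $T$ the fiber $Y_{\bar t}$ is reduced; hence $\Delta$ restricts to a nonzero section of $L^{\tensor -12}$ on $Y_{\bar t}$ if and only if it is nonzero at some point of $Y_{\bar t}$, i.e. if and only if $U_{\bar t}\neq\emptyset$, i.e. if and only if $\bar t\in p(U)$. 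Thus the preimage of $\WeierstrassData(Y)$ in $T$ equals the set $p(U)$, which is open since $p$ is an open map; as $T\to\mls H$ was arbitrary, $\WeierstrassData(Y)\hookrightarrow\mls H$ is representable by open immersions, as required.
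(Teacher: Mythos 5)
Your argument is correct, but it takes a genuinely different route from the paper's. For this statement the paper does not pass through a Hom-stack at all: it uses the forgetful morphism $\WeierstrassData(Y)\to\Pic_{Y/\Spec k}$ to the Picard stack, notes that its fibers are the representable spaces of pairs of sections of $L^{\tensor -4}$ and $L^{\tensor -6}$, asserts openness of the nonvanishing-of-discriminant condition, and inherits algebraicity from $\Pic_{Y/\Spec k}$. You instead identify the absolute stack of unrestricted Weierstrass data with $[\A^2/\G_m]$ and invoke Olsson's Theorem 1.1 for $\Hom_k(Y,[\A^2/\G_m])$, i.e.\ you re-use the tool the paper deploys for $\WeierstrassFibrations(Z)$ in Proposition \ref{prop:WF is an alg stack}. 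This is legitimate: the finiteness hypothesis (a finite flat cover by an algebraic space, fppf-locally on the base) is needed only for the source, which here is the proper scheme $Y$ itself, while the target needs only a quasi-compact separated diagonal, which the affine diagonal of $[\A^2/\G_m]$ supplies --- its failure to have \emph{finite} diagonal (the origin has stabilizer $\G_m$) is harmless. As for what each approach buys: the paper's route is more elementary (no Hom-stack machinery, just the Picard stack and section spaces) and exhibits $\WeierstrassData(Y)$ relatively over $\Pic_{Y/\Spec k}$; your route is uniform with the treatment of $\WeierstrassFibrations$, yields in addition a quasi-compact separated diagonal, and --- unlike the paper, which simply asserts it --- actually verifies the openness of the locus where $\Delta$ is not identically zero on geometric fibers, via the openness of the flat, finitely presented projection $Y\times_k T\to T$ together with the geometric reducedness of $Y$ (available since $Y$ is, by the paper's standing convention for $\WeierstrassData(Y)$, integral of finite type over the algebraically closed field $k$).
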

\begin{proof}
  There is a representable forgetful morphism
  $\WeierstrassData(Y)\to\Pic_{Y/\spec k}$ whose fiber over an invertible sheaf
  $L$ on $Y\times T$ is the space of pairs $(a,b)$ of sections of $L^{\tensor
  -4}$ and $L^{\tensor -6}$ such that the discriminant does not identically
  vanish, which is itself open in the space of pairs. The scheme of sections of
  an invertible sheaf is representable by a geometric line bundle. Since
  $\Pic_{Y/\Spec k}$ is an Artin stack locally of finite type over $k$, we
  conclude that the same is true for $\WeierstrassData(Y)$, as desired.
\end{proof}

Given a Weierstrass datum $(L,a,b)$ parametrized by $Y\times T$, one can associate a
Weierstrass fibration $X\to Y\times T$ parametrized by $T$ as follows. Let
$f:X\to Y\times T$ be the projection morphism associated to the relative divisor
$$y^2z=x^3+axz^2+bz^3$$ in $\P(L^{\tensor -2}\oplus L^{\tensor -3}\oplus\ms
O_{Y\times T})$. The section $\sigma$ is defined by $(x:y:z)=(0:1:0)$ (that is,
the projection on the second factor).

\begin{prop}\label{prop:weierstuff}
  Suppose $Y$ is a proper integral $k$-scheme. The morphism 
  $$\WeierstrassData(Y)\to\WeierstrassFibrations(Y)$$ described above is an
  isomorphism of stacks.
\end{prop}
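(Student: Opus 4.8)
\emph{Strategy.} The plan is to produce a quasi-inverse $\Psi$ to the morphism $\Phi\colon\WeierstrassData(Y)\to\WeierstrassFibrations(Y)$ constructed above, and to check that $\Phi\circ\Psi$ and $\Psi\circ\Phi$ are canonically isomorphic to the identities; equivalently, that $\Phi$ is fully faithful and essentially surjective. Since both sides are fppf stacks over $k$ and the formation of $\Phi$ commutes with base change in $T$, it is enough to work Zariski-locally on $Y\times_k T$, the remaining global input being that the locally made choices of Weierstrass coordinates differ by an $\Aut(L)=\mathbf{G}_m$-ambiguity, which is precisely the isomorphism relation in $\WeierstrassData$. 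I will also freely use, from the proof of Proposition~\ref{prop:WF is an alg stack}, that for a Weierstrass fibration $(f\colon X\to Y\times_k T,\sigma)$ with zero section $S=\sigma(Y\times_k T)$ the sheaf $f_\ast\ms O_X(3S)$ is locally free of rank $3$ and $X$ embeds as a family of plane cubics through the point cut out by $S$.

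\emph{Construction of $\Psi$.} Given $(X,\sigma)$, I would first set $L:=\R^1f_\ast\ms O_X$ (equivalently, the normal sheaf of the zero section), which is invertible with formation commuting with arbitrary base change by cohomology and base change, the geometric fibers of $f$ having arithmetic genus one. The same input gives $f_\ast\ms O_X(S)=\ms O$, $\R^1f_\ast\ms O_X(nS)=0$ for $n\geq1$, and an increasing filtration $\ms O=f_\ast\ms O_X(S)\subset f_\ast\ms O_X(2S)\subset f_\ast\ms O_X(3S)$ whose successive quotients are $\ms O$ and fixed powers of $L$. Working on an affine open of $Y\times_k T$, I would trivialize $L$, split the filtration to obtain functions $x\in f_\ast\ms O_X(2S)$ and $y\in f_\ast\ms O_X(3S)$, and extract from the linear relation among $1,x,y,x^2,xy,x^3,y^2$ in the rank-$6$ sheaf $f_\ast\ms O_X(6S)$ a generalized Weierstrass equation; since $\ch k\neq2,3$, completing the square and the cube puts it in the form $y^2=x^3+\alpha x+\beta$. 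Tracking how $x$ and $y$ change under a change of trivialization of $L$ (after renormalizing, $x\mapsto u^2x$ and $y\mapsto u^3y$ for a unit $u$), one checks that $\alpha$ and $\beta$ transform as sections of $L^{\tensor-4}$ and $L^{\tensor-6}$ respectively; hence they glue to global $a\in\Gamma(Y\times_k T,L^{\tensor-4})$ and $b\in\Gamma(Y\times_k T,L^{\tensor-6})$, with any residual ambiguity a unit $u$, i.e.\ an isomorphism of the datum $(L,a,b)$. On each geometric fiber of $T$ the discriminant $\Delta=4a^{\tensor3}+27b^{\tensor2}$ vanishes identically exactly when $X_t\to Y_t$ has no smooth fiber, so $\Psi(X,\sigma):=(L,a,b)$ lies in $\WeierstrassData(Y)$; the upshot is that $\Phi$ carries $\WeierstrassData(Y)$ isomorphically onto the open substack $\WeierstrassFibrations(Y)^\circ$ cut out by the smooth-fiber condition, which is the correct target if one wants a literal isomorphism.

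\emph{Mutual inverseness and full faithfulness.} For $\Psi\circ\Phi\cong\id$: for a datum $(L,a,b)$ the cubic $y^2z=x^3+axz^2+bz^3$ in $\P(L^{\tensor-2}\oplus L^{\tensor-3}\oplus\ms O)$ has zero section $\{x=z=0\}$ with normal sheaf $L$, and its tautological coordinates already sit in reduced Weierstrass form with coefficients $(a,b)$, so $\Psi$ returns $(L,a,b)$ canonically. For $\Phi\circ\Psi\cong\id$: a local splitting of the filtration identifies $f_\ast\ms O_X(3S)$ with $\ms O$ together with the relevant powers of $L$, and under this identification the canonical embedding of $X$ from Proposition~\ref{prop:WF is an alg stack} becomes the Weierstrass cubic attached to $\Psi(X,\sigma)$. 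For full faithfulness, an isomorphism of Weierstrass fibrations over $Y\times_k T$ with $\beta=\id$ induces, via these canonical embeddings into $\P^2$-bundles, a linear change of coordinates preserving the point cut out by the section and the filtration, which in reduced Weierstrass form and with $\ch k\neq2,3$ is forced to be $(x,y,z)\mapsto(u^2x,u^3y,z)$ for a unique unit $u$ --- that is, an isomorphism $\phi\colon L\to L'$ of Weierstrass data --- and conversely. Hence $\Phi$ is an equivalence, i.e.\ an isomorphism of stacks.

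\emph{Main obstacle.} The real work, and the only place genuine care is needed, is carrying out cohomology and base change, the filtration, the local splitting, and the reduced-Weierstrass normalization uniformly over an arbitrary (possibly non-reduced) test scheme $T$ and an arbitrary proper integral $Y$, and then checking that the locally made choices glue to a single global datum; this amounts to bookkeeping with the $\mathbf{G}_m=\Aut(L)$-ambiguity, using $\ch k\neq2,3$ to kill the lower coefficients functorially, and it is in essence the classical Weierstrass structure theory (cf.\ \cite{MR615858,MR0417182}) transported to this relative, stacky setting. The one genuinely subtle point of comparison is identifying ``$\Delta$ not identically zero on geometric fibers of $T$'' with ``$X_t\to Y_t$ has a smooth fiber,'' which pins down the correct target substack.
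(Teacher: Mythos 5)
Your proposal is correct, and the mathematics underlying it is the same as the paper's; the difference is in execution. The paper's proof is a reduction-plus-citation: it observes that it suffices to check the equivalence over a strictly local $T$, quotes \cite[Theorem 2.1]{MR615858} and \cite[Theorem 1]{MR0437531}, and adds only the remark that those proofs are consequences of cohomology and base change on the fibers and therefore remain valid over non-reduced bases. You instead build the quasi-inverse directly: $L=\R^1f_\ast\ms O_X$ (the normal sheaf of the zero section), the filtration $\ms O=f_\ast\ms O_X(S)\subset f_\ast\ms O_X(2S)\subset f_\ast\ms O_X(3S)$ with graded pieces $\ms O$, $L^{\tensor 2}$, $L^{\tensor 3}$, locally normalized coordinates $x,y$ (unique once $L$ is trivialized, using $\ch k\neq 2,3$ to kill the lower coefficients), and gluing via the transformation law $\alpha\mapsto u^4\alpha$, $\beta\mapsto u^6\beta$, which is exactly what exhibits $a,b$ as sections of $L^{\tensor -4}$, $L^{\tensor -6}$ in the paper's (dual-of-fundamental-bundle) convention, so your normalization is consistent with $\WeierstrassData^{-2}$ meaning $a\in\Gamma(\ms O(8))$, $b\in\Gamma(\ms O(12))$ for elliptic K3 surfaces. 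What each route buys: the citation route is short and isolates the one point needing care (non-reduced $T$) in a single remark; your route is self-contained, makes the cohomology-and-base-change verification explicit, and usefully surfaces the point the paper relegates to the parenthetical ``respecting the indicated subcategories'' --- since the discriminant condition is built into $\WeierstrassData(Y)$, the literal target of the equivalence is the open substack $\WeierstrassFibrations(Y)^\circ$ of fibrations whose geometric fibers over $T$ have a smooth fiber, rather than all of $\WeierstrassFibrations(Y)$. Your treatment of full faithfulness (the canonical $|3S|$-embedding forces any section-preserving isomorphism into the form $(x,y)\mapsto(u^2x,u^3y)$ in reduced Weierstrass form) is also the standard and correct argument.
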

\begin{proof}
  To prove the statement it suffices to prove that the functor
  $$\WeierstrassData(Y)_T\to\WeierstrassFibrations(Y)_T$$ is an equivalence
  (respecting the indicated subcategories), where $T$ is a strictly local ring.
  This then reduces to \cite[Theorem 2.1]{MR615858}, whose proof invokes
  \cite[Theorem 1${}^\prime$]{MR0437531}, whose proof is, in turn, an unrecorded
  relativization of \cite[Theorem 1]{MR0437531}). Furthermore, one can see that
  the proof of \cite[Theorem 1]{MR0437531} is a consequence of cohomology and
  base change for the fibers of Weierstrass fibrations, so it applies more
  generally over non-reduced bases (which is not \emph{a priori\/} allowed by
  the hypotheses of \cite[Theorem 1]{MR0437531} and \cite[Theorem
  2.1]{MR615858}).
\end{proof}

\begin{defn}\label{defn:minimal}
  A Weierstrass datum $(L,a,b)$ parametrized by a smooth curve $Y$ is
  \emph{minimal\/} if for all points $y\in Y$ we have that $\ord_y a<4$ or $\ord_y b < 6$.
\end{defn}

\begin{lem}\label{lem:minimal is open}
	Given an object $(L,a,b)$ of $\WeierstrassData(Y)_T$, there is an open
	subscheme $U\subset T$ such that a $T$-scheme $Z\to T$ factors through $U$ if
	and only if for all geometric points $z\to Z$, the restriction $(L,a,b)_z$ is
	minimal in the sense of Definition \ref{defn:minimal}.
\end{lem}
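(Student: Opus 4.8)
The claim is that minimality of a Weierstrass datum is an open condition on the base of a family. The plan is to reduce to the local structure of the datum near each point of the curve $Y$ and exhibit the non-minimal locus as a closed subscheme with open complement. First I would observe that, after passing to an fppf (or étale) cover, we may trivialize the invertible sheaf $L$ on $Y \times_k T$ (or at least work locally on $Y$), so that $a$ and $b$ become honest regular functions on (an open of) $Y \times_k T$; since the conclusion is about an open subscheme of $T$ and openness is fppf-local on the target, this is harmless. The datum $(L,a,b)_z$ fails to be minimal at a geometric point $y \in Y_z$ precisely when $\ord_y a \geq 4$ \emph{and} $\ord_y b \geq 6$, i.e.\ when, in a local coordinate $\pi$ at $y$, both $a$ and $b$ vanish to the indicated orders.

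The key step is to package "non-minimal somewhere on $Y_z$" as the image of a closed condition under the proper morphism $Y \times_k T \to T$. Concretely, consider the closed subscheme $W \subset Y \times_k T$ cut out by the vanishing of $a$ and $b$ together with enough of their relative derivatives along $Y$ (equivalently, by the ideal generated by $a$ and $b$ inside the $4$th, resp.\ $6$th, infinitesimal neighborhood data — this is where one uses that $Y$ is a \emph{smooth} curve, so "$\ord_y a \geq 4$" is the scheme-theoretic condition that $a$ lies in the $4$th power of the ideal of the section $y$). Since $Y$ is proper over $k$, the projection $p: Y \times_k T \to T$ is proper, so $p(W) \subset T$ is closed. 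I claim its complement $U := T \smallsetminus p(W)$ does the job: a $T$-scheme $Z \to T$ factors through $U$ if and only if the base change $W_Z \subset Y \times_k Z$ is empty, and by the explicit description of $W$ this is equivalent to $(L,a,b)_z$ being minimal at every $y \in Y_z$ for every geometric point $z \to Z$. The "only if" uses properness (closed image); the "if" direction, that factoring through $U$ is detected by geometric fibers, is because a map to $T$ lands in an open subset iff every point of the source maps into it, which is a fiberwise condition.

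The main obstacle — really the only subtle point — is getting the scheme structure of $W$ right so that minimality is \emph{exactly} the fiberwise non-vanishing, not just set-theoretically. The statement of the lemma is purely topological (it only asks for an open $U$ detecting the locus where all geometric fibers are minimal), so one does not actually need $W$ to represent a functor; it suffices to describe \emph{some} closed subscheme of $Y \times_k T$ whose support, in each geometric fiber over $T$, is precisely the non-minimal locus of that fiber. Since minimality at $y$ is the condition $\ord_y a < 4$ or $\ord_y b < 6$ and these orders are upper semicontinuous and computed on fibers using the smoothness of $Y$ (a curve, so all its local rings are DVRs and order of vanishing behaves well), the set-theoretic locus in $Y\times_k T$ where $a$ vanishes to order $\geq 4$ and $b$ to order $\geq 6$ is closed; endow it with, say, its reduced structure to get $W$. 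Then $p(W)$ is closed by properness of $p$, and $U$ is its complement. I expect the write-up to be short: invoke Proposition~\ref{prop:weierstuff} implicitly only to know these data make sense, trivialize $L$ locally on $Y$, form $W$, and apply properness of $Y/k$.
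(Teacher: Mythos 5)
Your argument is correct, but it is a genuinely different proof from the one in the paper. The paper works entirely on the base: it reduces to $T$ Noetherian and integral, flattens the vanishing divisors of $a$ and $b$ over an open subset, passes to a finite flat cover so that these divisors split into multiples of sections $s_i$, $t_j$, reads off minimality from the multiplicities at the loci where $s_i$ meets $t_j$, and concludes by Noetherian induction that the minimal locus is constructible, hence open since it is stable under generization. You instead work upstairs: you form the relative non-minimal locus $W\subset Y\times_k T$ where the fiberwise orders satisfy $\ord\,a\geq 4$ and $\ord\,b\geq 6$, observe it is closed, and push it down by the proper projection to $T$; this avoids the flattening, the finite flat cover, and the Noetherian induction, at the price of needing (i) properness of $Y$ explicitly --- which is in fact also used implicitly in the paper's proof (closedness of the images $Z_{ij}$), is satisfied in the only case the lemma is applied ($Y=\P^1$), and is genuinely necessary, since for non-proper $Y$ the non-minimal points can escape to infinity and the minimal locus need not be open --- and (ii) the closedness of $W$, i.e.\ upper semicontinuity of the fiberwise vanishing order, which you assert somewhat loosely. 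Two refinements would tighten (ii): first, in characteristic $p$ (here possibly $p=5$) naive iterated relative derivatives do \emph{not} detect ``$\ord_y b\geq 6$'' (e.g.\ $b=u^5$ has all iterated derivatives vanishing), so the ``equivalently'' in your sketch should be resolved in favor of the relative principal parts (jet) formulation you mention: $W$ is the common zero locus of the sections $j^3(a)$ and $j^5(b)$ of the relative principal parts bundles, which are locally free because $Y\times_k T\to T$ is smooth of relative dimension $1$, so $W$ is closed and commutes with base change on $T$. Second, to pass from points of $T$ to arbitrary geometric points $z\to Z\to T$ one should note that minimality is insensitive to enlarging the algebraically closed field; this uses that $a$ and $b$ do not both vanish identically on a geometric fiber (guaranteed by the nonvanishing of the discriminant in the definition of $\WeierstrassData(Y)$), and it is handled automatically by the base-change compatibility of the jet sections. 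With these two points made explicit, your proof is complete and arguably more direct than the paper's.
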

In other words, the minimal locus is open.
\begin{proof}
	This is a topological condition, so it suffices to show that the locus $R$ of
	points $t\in T$ such that $(L,a,b)_t$ is minimal is open. First, note that any
	Weierstrass datum is locally induced by a Weierstrass datum over a Noetherian
	scheme. Thus, we may assume $T$ is Noetherian. The locus $R$ is closed under
	generization, since the order of vanishing of a section of an invertible sheaf
	can only increase under specialization. To show the desired openness it
	suffices to show that $R$ is constructible. To do this, we may assume $T$ is
	integral. There is an open subscheme $U\subset T$ over which the vanishing
	loci of $a$ and $b$ are flat. Shrinking if necessary and making a finite flat
	extension of $U$, we may assume that in fact we can write the vanishing locus
	of $a$ as $\sum_{i\in I} a_i s_i$ and the vanishing locus of $b$ as
	$\sum_{j\in J} b_j t_j$, where $s_i$ and $t_j$ are sections of the projection
	$Y_U\to U$ and $a_i, b_j\in\N$. The points in $R_U$ parametrizing minimal
	Weierstrass data can be described as follows: given $i\in I$ and $j\in J$, let
	$Z_{ij}\subset U$ be the image of $s_i\cap t_j$, which is a closed subset. If
	$Z_{ij}\neq\emptyset$, then $R$ contains $Z_{ij}$ if and only if $a_i<4$ or
	$b_j<6$. We can thus write $R_U$ as the complement of finitely many $Z_{ij}$,
	depending upon the values of $a_i$ and $b_j$. This defines an open subset of
	$U$. By Noetherian induction, we see that $R$ is constructible, as desired.
\end{proof}

In this paper, we will focus on the case $Y=\P^1$, and we will work with
fibrations that are generically smooth. 

\begin{notation}
  We will write $\WeierstrassFibrations$ for
  $\WeierstrassFibrations(\P^1)^\circ$ and $\WeierstrassData$ for
  $\WeierstrassData(\P^1)$ in what follows. We will write $\WeierstrassData^n$
  for the substack consisting of minimal Weierstrass data $(L,a,b)$ where $L$
  has degree $n$ (i.e., $L\cong\ms O_{\P^1}(n)$).
\end{notation}

\subsection{A concrete cover of $\WeierstrassData^n$}
\label{sec:concrete}

We can describe
$\WeierstrassData^n$ concretely as follows. Let $V_i$ be the affine space whose
underlying $k$-vector space is $\Gamma(\P^1,\ms O(i))$, and write
$V_i^\ast=V^i\setminus\{0\}$ and $V_i^\aff\subset V_i^\ast$ for the set of
sections that do not vanish at $\infty\in\P^1$. There is an open subset
$U_n\subset V_{-4n}^\ast\times V_{-6n}^\ast$ consisting of pairs $a\in
V_{-4n}^\ast, b\in V_{-6n}^\ast$ such that $4a^3+27b^2\neq 0$. 

\begin{lem}\label{lem:cov}
  The universal pair $(a,b)$ over $U_n$ gives a smooth cover
  $\chi:U_n\to\WeierstrassData^n$. In addition, the fibers of $\chi$ have
  dimension $1$.
\end{lem}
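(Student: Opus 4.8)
The plan is to unwind the abstract definition of $\WeierstrassData^n$ given above and compare it with the explicit parameter space $U_n$. Recall that $\WeierstrassData^n$ is the substack of $\WeierstrassData(\P^1)$ of minimal Weierstrass data $(L,a,b)$ with $\deg L = n$, and that by definition a $T$-point of $\WeierstrassData(\P^1)$ is a triple $(L,a,b)$ on $\P^1\times T$ with $L$ invertible, $a\in\Gamma(L^{\otimes -4})$, $b\in\Gamma(L^{\otimes -6})$, discriminant not identically zero on any geometric fiber, and (for $\WeierstrassData^n$) the minimality condition of Definition \ref{defn:minimal} together with $\deg L = n$ fiberwise. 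The universal pair $(a,b)$ on $U_n$ determines such a triple with $L = \ms O_{\P^1}(n)$ pulled back to $\P^1\times U_n$, and hence a morphism $\chi\colon U_n\to\WeierstrassData^n$; one must check it lands in the open substack (the nonvanishing of the discriminant is built into the definition of $U_n$, and minimality holds because $a,b$ have degrees $-4n,-6n$ so no common factor issue of the form $\ord_y a\geq 4$ and $\ord_y b\geq 6$ can occur on $\ms O(n)$ --- this last point needs a one-line degree check, or alternatively one simply restricts to the minimal open locus, which is harmless since every point of $U_n$ is already minimal).

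First I would establish surjectivity and smoothness. Given any $T$-point $(L,a,b)$ of $\WeierstrassData^n$, fppf-locally on $T$ one can trivialize $L$: since $L$ has degree $n$ on each fiber $\P^1_t$, after an fppf base change $T'\to T$ there is an isomorphism $L|_{\P^1\times T'}\cong \ms O_{\P^1}(n)\boxtimes\ms O_{T'}$ (this uses properness of $\P^1$, the fact that $\Pic$ of $\P^1$ is discrete, and that the line bundle $\underline{\Isom}(L, \ms O(n))$ is a $\G_m$-torsor, hence fppf-locally trivial). Under such a trivialization, $a$ and $b$ become sections of $\ms O(-4n)$ and $\ms O(-6n)$ over $\P^1_{T'}$, i.e. a $T'$-point of $V_{-4n}\times V_{-6n}$; nonvanishing of the discriminant places it in $U_n$. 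This produces, fppf-locally, a lift to $U_n$, giving surjectivity as a map of stacks. For smoothness, I would observe that $\chi$ is a $\G_m$-torsor over its image in the following sense: the ambiguity in trivializing $L$ is exactly a global unit on $\P^1\times T'$, i.e. an element of $\G_m(T')$ (since $\ms O_{\P^1}^\ast = \G_m$), and the corresponding change of trivialization scales $(a,b)$ by $(\lambda^4,\lambda^6)$ — wait, by $(\phi^{\otimes -4}, \phi^{\otimes -6})$ where $\phi$ is the unit $\lambda$. Hence the fiber product $U_n\times_{\WeierstrassData^n} U_n$ is the $\G_m$-action groupoid on $U_n$ via $\lambda\cdot(a,b) = (\lambda^{-4}a,\lambda^{-6}b)$, which is smooth (indeed affine and smooth) over $U_n$; since $U_n$ is a smooth $k$-scheme and the two projections are smooth of relative dimension $1$, descent gives that $\chi$ is smooth of relative dimension... no: $\chi$ itself is a $\G_m$-torsor onto its (open, dense) image, hence smooth surjective of relative dimension $1$.

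The key computation is thus the fiber dimension: the fibers of $\chi$ are $\G_m$-torsors, and $\dim\G_m = 1$, giving the ``dimension $1$'' claim. I would spell this out by noting that $\WeierstrassData^n = [U_n/\G_m]$ for the weighted action $\lambda\cdot(a,b)=(\lambda^{-4}a,\lambda^{-6}b)$ (this identification is precisely the content of Proposition \ref{prop:weierstuff} combined with the trivialization of $L$), so that $\chi$ is literally the quotient presentation, manifestly smooth with one-dimensional ($\G_m$-torsor) fibers. The main obstacle I anticipate is purely bookkeeping: one must verify that the automorphisms of a Weierstrass datum $(L,a,b)$ with $\deg L = n$ over a \emph{connected} base are exactly $\G_m$ acting with weights $(-4,-6)$ — that is, that $\Isom((L,a,b),(L,a,b))$ is the subgroup of $\G_m = \underline{\Aut}(L)$ fixing $a$ and $b$, which over a reduced connected base where $a$ or $b$ is nonzero forces $\lambda^{4}=1$ or $\lambda^6=1$ — but for the \emph{generic} Weierstrass datum both are nonzero and then only $\lambda = 1$ survives, whereas the torsor $\chi$ remembers the full $\G_m$ of trivialization choices regardless. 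Reconciling ``$\chi$ is a $\G_m$-torsor'' (fiber dimension $1$) with ``the generic automorphism group is finite (often trivial)'' is the only subtlety, and it is resolved by distinguishing the $\G_m$ of \emph{choices of trivialization of $L$} (always $\G_m$) from the $\G_m$-\emph{stabilizer of $(a,b)$} (generically trivial). No hard input beyond Proposition \ref{prop:weierstuff} and standard fppf descent for line bundles on $\P^1$ is needed.
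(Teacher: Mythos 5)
Your main line of argument matches the paper's: after trivializing $L$ locally on the base (a line bundle of fiberwise degree $n$ on $\P^1\times T$ is, locally on $T$, pulled back from $\ms O_{\P^1}(n)$), the map $\chi$ becomes the standard presentation of the quotient $[U_n/\G_m]$ for the weighted $\G_m$-action, hence is smooth and surjective with fibers that are $\G_m$-torsors, so of dimension $1$. The paper's own proof is terser --- smoothness is deduced from the deformation-theoretic content of Proposition \ref{prop:weierstuff} (deformations of the family come from deforming the sections $a,b$) and the fibers are identified with the $\G_m$-orbits in $U_n$ --- but it is the same idea, and your separation of the torsor of trivializations of $L$ from the finite stabilizers $\m_2,\m_4,\m_6$ is exactly the right way to reconcile the one-dimensional fibers with the finite automorphism groups of Corollary \ref{lem:wd-dim}.

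One asserted step, however, is false: minimality is \emph{not} automatic for pairs $(a,b)\in U_n$, and your ``one-line degree check'' does not exist. For $n=-2$ take $a=t^8$ and $b=t^{12}$ (rescale $b$ if $p=31$): then $\Delta$ is a nonzero multiple of $t^{24}$, so $(a,b)$ lies in $U_n$, yet at the point $t=0$ one has $\ord a=8\geq 4$ and $\ord b=12\geq 6$, so the associated Weierstrass datum is not minimal; in particular the fallback claim that ``every point of $U_n$ is already minimal'' also fails. Since $\WeierstrassData^n$ is by definition the substack of \emph{minimal} data, your $\chi$ only lands in $\WeierstrassData^n$ after replacing $U_n$ by its open minimal sublocus (open by Lemma \ref{lem:minimal is open}), which is a genuinely proper open subset; this replacement costs nothing for smoothness, surjectivity, or the fiber dimension, and the paper's definition of $U_n$ silently glosses the same point (as it does the strata $a=0$ and $b=0$, which your surjectivity argument, like the paper's, also quietly ignores), but the justification as you wrote it should be corrected rather than relied on.
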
 
\begin{proof}
  By Proposition \ref{prop:weierstuff}, any deformation of a family is induced
  by a deformation of the sections $a$ and $b$, so $\chi$ is smooth. On the
  other hand, the fibers of $\chi$ are precisely the $\G_m$-orbits in $U$, so
  $\chi$ has relative dimension $1$.
\end{proof}

\begin{cor}\label{lem:wd-dim}
  The stack $\WeierstrassData^{-2}$ is a tame DM stack of dimension $21$.
\end{cor}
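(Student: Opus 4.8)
The plan is to compute the dimension of $\WeierstrassData^{-2}$ by using the smooth cover $\chi:U_n\to\WeierstrassData^n$ from Lemma \ref{lem:cov} with $n=-2$, and then to verify the tameness and Deligne--Mumford properties separately from the explicit description of stabilizers.

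First I would pin down the dimension of the cover. For $n=-2$ the relevant spaces are $V_{-4n}=V_8$ and $V_{-6n}=V_{12}$, whose underlying vector spaces $\Gamma(\P^1,\ms O(8))$ and $\Gamma(\P^1,\ms O(12))$ have dimensions $9$ and $13$. Thus $U_{-2}$ is open in $V_8^\ast\times V_{12}^\ast$ and has dimension $9+13=22$. Since $U_{-2}$ is nonempty (the discriminant of a generic pair is not identically zero) and irreducible, it is equidimensional of dimension $22$. By Lemma \ref{lem:cov}, $\chi:U_{-2}\to\WeierstrassData^{-2}$ is smooth of relative dimension $1$, so $\dim\WeierstrassData^{-2}=22-1=21$.

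Next I would address the stacky adjectives. The stack $\WeierstrassData^{-2}$ is identified via Proposition \ref{prop:weierstuff} with the corresponding substack of $\WeierstrassFibrations$, which by Proposition \ref{prop:WF is an alg stack} has finite diagonal; hence $\WeierstrassData^{-2}$ is a separated Artin stack with finite diagonal, and in particular has finite (hence quasi-finite) inertia. To see it is Deligne--Mumford and tame, I would invoke the description of automorphism groups of Weierstrass fibrations: an automorphism of a minimal Weierstrass datum $(L,a,b)$ over $\P^1$ is an automorphism $\phi$ of $L\cong\ms O(-2)$ with $\phi^{\tensor-4}(a)=a$ and $\phi^{\tensor-6}(b)=b$, i.e.\ (since $\Aut(\ms O(-2))=\G_m$) a scalar $\mu$ with $\mu^4 a=a$ and $\mu^6 b=b$; as $a,b$ are not both zero, $\mu$ is a root of unity of order dividing $4$ or $6$, hence of order dividing $12$. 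Over any base, the automorphism group scheme is therefore a closed subgroup scheme of $\m_{12}$, which is finite and étale since $\ch k=p\geq 5$ is prime to $12$. A stack locally of finite type with finite étale stabilizers is a tame Deligne--Mumford stack. Combining with the dimension count completes the proof.

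The main obstacle will be the bookkeeping around stabilizers: one must be careful that the $\G_m$ appearing in the fibers of $\chi$ (the scaling action on $(a,b)$) is exactly the $\Aut(L)$-ambiguity, so that quotienting cuts the dimension by exactly $1$ and leaves residual stabilizers that are the finite subgroups of $\m_{12}$ described above — rather than, say, a positive-dimensional stabilizer that would both spoil the dimension count and the Deligne--Mumford claim. Everything else is routine: the dimension of global sections of line bundles on $\P^1$, openness of the non-vanishing-discriminant locus, and the fact that a Deligne--Mumford stack with order-of-stabilizer prime to the characteristic is tame.
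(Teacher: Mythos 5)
Your proposal is correct and follows essentially the same route as the paper: the dimension comes from the $22$-dimensional space $V_8\times V_{12}$ together with the relative dimension $1$ of the cover in Lemma \ref{lem:cov}, and tameness plus the Deligne--Mumford property come from the observation that the stabilizer of a datum $(\ms O(-2),a,b)$ consists of scalars killed by $4$ or $6$ (in fact $\m_2$, $\m_4$, or $\m_6$), hence finite étale of order prime to $p\geq 5$. The extra scaffolding you add (finite diagonal via Propositions \ref{prop:weierstuff} and \ref{prop:WF is an alg stack}) is consistent with, and implicit in, the paper's argument.
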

\begin{proof}
  The automorphism group of a datum $(\ms O(-2),a,b)$ is given by scalar
  multiplications $s:\ms O(-2)\to\ms O(-2)$ such that the action of $s$ on
  preserves $a$ and $b$. In particular, if $a$ and $b$ are both nonzero, then
  $s^4=1=s^{6}$. Thus, $s^2=1$, so the automorphism group is $\m_2$. Similarly,
  if $b=0$ then the automorphism group is $\m_4$ and if $a=0$ then the
  automorphism group is $\m_6$. Since $p\geq 5$, we see that
  $\WeierstrassData^{-2}$ is a tame DM stack.

  To compute the dimension, note that $V_8\times V_{12}$ has dimension $22$. By
  Lemma \ref{lem:cov}, we see that $\WeierstrassData^{-2}$ has dimension $21$,
  as desired. 
\end{proof}

\subsection{The cover by universal polynomials}
\label{sec:univ poly}

For the sake of computation, there is a further flat covering that will be
useful.

\begin{defn}\label{defn:poly}
  Let $\Poly\subset\A^{-10n+2}$ be the space of tuples
  $$(a_0,a_1,\ldots,a_{-4n},b_0,b_1,\ldots,b_{-6n})$$ such that the polynomial
  $$\Delta=4a_0^3\left(\prod_{i=1}^{-4n}(t-a_i)\right)^3-27b_0^2\left(\prod_{j=1}^{-6n}(t-b_j)\right)^2$$
  is not identically $0$ and the associated Weierstrass fibration is relatively
  minimal. (This is open by Lemma \ref{lem:minimal is open}.)
\end{defn}

There is a universal Weierstrass equation defined over $\Poly$, namely 
\begin{equation}\label{eq:weier}
  y^2=x^3+a_0\left(\prod_{i=1}^{-4n}(t-a_i)\right)x+b_0\prod_{j=1}^{-6n}(t-b_j).
\end{equation}
It has the property that the fiber over $t=\infty$ is not an additive fiber.
There is a natural decomposition
$$\Poly=\Poly^{ab\neq 0}\sqcup\Poly^{a=0}\sqcup\Poly^{b=0}.$$ The open locus
$\Poly^{ab\neq 0}$ corresponds to fibrations where $a\neq 0$ and $b\neq 0$. The
closed subsets $\Poly^{a=0}$ and $\Poly^{b=0}$ are given by the vanishing of
$a_0$ and $b_0$ (and hence the corresponding coefficient in the Weierstrass
equation), respectively. One sees that $\Poly^{a=0}$ corresponds to Weierstrass
fibrations that are isotrivial with fiber of $j$-invariant $0$, while
$\Poly^{b=0}$ parametrizes Weierstrass fibrations that are isotrivial with fiber
of $j$-invariant $1728$. Since these properties are invariant under the action
of $\PGL_2$, we see that this stratification covers a similar decomposition
$$\WeierstrassData^n=\WeierstrassData^{n,{ab\neq
0}}\sqcup\WeierstrassData^{n,a=0}\sqcup\WeierstrassData^{n,b=0}.$$

We describe a useful covering of the various strata $\WeierstrassData^{n,\star}$
by copies of the corresponding stratum $\Poly^{\star}$.

\begin{lem}\label{lem:coverings}
	Fix a value of $\star\in\{ab\neq 0, a=0, b=0\}$. There is a set of flat
  morphisms of finite presentation
	$$\rho_s^\star:\Poly^\star\to\WeierstrassData^{n,\star}, s\in S_\star$$ such
	that
  \begin{enumerate}
    \item the $\rho_s^\star$ form an fppf covering of
    $\WeierstrassData^{n,\star}$;
    \item for each $\rho_s^\star$, the pullback of the universal Weierstrass
    equation is isomorphic to \eqref{eq:weier} (with the appropriate coefficient
    equal to $0$, depending upon the value of $\star$).
  \end{enumerate}
\end{lem}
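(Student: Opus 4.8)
The plan is to produce the covering maps $\rho_s^\star$ by combining the concrete cover $U_n \to \WeierstrassData^n$ of Lemma \ref{lem:cov} with the polynomial parametrization underlying $\Poly^\star$, using the action of $\PGL_2 = \Aut(\P^1)$ to move the additive fibers away from $\infty$.

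First I would recall that by Lemma \ref{lem:cov} the map $\chi\colon U_n \to \WeierstrassData^n$ is a smooth surjection whose fibers are the $\G_m$-orbits, where $U_n \subset V_{-4n}^\ast \times V_{-6n}^\ast$ parametrizes pairs $(a,b)$ with $4a^3 + 27b^2 \neq 0$. A point of $U_n$ lies over $\WeierstrassData^{n,\star}$ exactly when $(a,b)$ satisfies the condition defining $\star$ (i.e. $a=0$, $b=0$, or neither), and the resulting Weierstrass fibration is automatically minimal since we are inside $\WeierstrassData^n$. The issue is that a general pair $(a,b)$ will have an additive fiber over $\infty = [1:0]$, so it is not directly in the image of the polynomial parametrization \eqref{eq:weier}, whose defining property is precisely that the fiber over $t=\infty$ is \emph{not} additive. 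The remedy is to precompose with automorphisms of $\P^1$: for each $\sigma \in \PGL_2$, composing a Weierstrass fibration with $\sigma$ moves the fiber over $\sigma^{-1}(\infty)$ to $\infty$. Since $\P^1$ has only finitely many fibers with additive reduction (at most, say, the support of $\FT(f)$), for any fixed fibration some automorphism moves all additive fibers away from $\infty$; one then dehomogenizes to get an equation of the shape \eqref{eq:weier}.

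Concretely, I would take the index set $S_\star$ to be a suitable finite (or locally finite) collection of elements of $\PGL_2(k)$ — it suffices to choose finitely many $\sigma$ so that for every configuration of at most $24$ points on $\P^1$, at least one $\sigma$ carries $\infty$ to a point disjoint from that configuration (this is an open-dense condition on $\PGL_2$, so finitely many suffice to cover by a counting argument over an infinite field, or one can simply take $S_\star = \PGL_2$ as a scheme and work with the universal automorphism). For each such $\sigma$, define $\rho_s^\star\colon \Poly^\star \to \WeierstrassData^{n,\star}$ by sending a tuple $(a_0,a_i,b_0,b_j)$ to the Weierstrass datum associated to the equation \eqref{eq:weier} and then composing the resulting fibration $X \to \P^1$ with $\sigma\colon \P^1 \to \P^1$ (equivalently, pulling back the datum along $\sigma$, which multiplies $L$ by $\sigma^\ast$ and transforms $a, b$ accordingly). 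Each $\rho_s^\star$ factors as $\Poly^\star \to U_n^\star \xrightarrow{\chi} \WeierstrassData^{n,\star}$ where the first map records the coefficients of the polynomials in \eqref{eq:weier} (after the $\sigma$-twist) as sections of $\ms O(-4n)$ and $\ms O(-6n)$; this first map is an open immersion followed by a $\G_m$-bundle projection in the relevant coordinates, hence flat of finite presentation, and $\chi$ is smooth, so the composite is flat of finite presentation, giving property (1)'s flatness. Property (2) holds essentially by construction, since the pullback of the universal Weierstrass equation over $\Poly^\star$ along $\rho_s^\star$ is \eqref{eq:weier} by definition of the map.

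For the covering claim (1), I would argue that every geometric point of $\WeierstrassData^{n,\star}$ is in the image of some $\rho_s^\star$: given a Weierstrass datum $(L,a,b)$ over an algebraically closed field, its associated fibration has finitely many additive fibers, so some $\sigma \in S_\star$ moves all of them away from $\infty$; after this twist the datum is represented by an equation of the form \eqref{eq:weier} (normalizing $L \cong \ms O(n)$ and using that the fiber at $\infty$ being non-additive lets us dehomogenize), which is a point of $\Poly^\star$ mapping to it. Combined with flatness and finite presentation this gives the fppf covering. The main obstacle I anticipate is bookkeeping: making sure the index set $S_\star$ is genuinely a \emph{set} of morphisms (rather than a family parametrized by $\PGL_2$) while still covering — this requires the observation that since there is a uniform bound ($\deg \Delta \le 24$ for $n = -2$, the K3 case) on the number of additive fibers, a uniform finite collection of automorphisms suffices over any infinite field, and over a finite field one passes to a finite extension or simply allows $S_\star$ to be (the $k$-points of) $\PGL_2$. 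A secondary technical point is checking that the twist by $\sigma$ interacts correctly with the stack structure — i.e. that pulling back a Weierstrass \emph{datum} along an automorphism of $\P^1$ is again a Weierstrass datum of the same degree — but this is immediate from the definitions since $\sigma^\ast \ms O(n) \cong \ms O(n)$ and $\sigma^\ast$ carries sections of $L^{\otimes -4}, L^{\otimes -6}$ to sections of the twisted bundle.
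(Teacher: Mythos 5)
Your overall architecture — map $\Poly^\star$ to the Weierstrass data whose fibers over $\infty$ are non-additive, then translate by elements of $\PGL_2(k)$ to get joint surjectivity, with property (2) holding by construction — is exactly the paper's strategy, and that part is fine. But there is a genuine gap at the one point of the lemma that actually requires an argument: flatness. You assert that the map $\Poly^\star\to U_n^\star$ "recording the coefficients" is an open immersion followed by a $\G_m$-bundle projection. It is not: the coordinates $a_1,\dots,a_{-4n},b_1,\dots,b_{-6n}$ on $\Poly$ are the \emph{roots} of the two factors in \eqref{eq:weier}, so the map to the space of forms sends $(a_0,a_1,\dots)$ to $a_0\prod_i(t-a_is)$, i.e.\ to $a_0$ times the elementary symmetric functions of the $a_i$. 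This map identifies tuples differing by a permutation of the roots, so it is generically finite of degree $(-4n)!\,(-6n)!$ onto its image and in particular is not injective on points; no immersion/bundle description is available, and flatness of $\rho_s^\star$ does not follow from what you wrote. (Knowing that $\chi$ is smooth does not help: flatness of a composite is not implied by flatness of the second factor alone.)

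The paper's proof supplies precisely this missing ingredient. One reduces, using the product structure and the free scaling action of $\G_m$ on the leading coefficient, to showing that the map $\A^m\to\A^m$, $(c_1,\dots,c_m)\mapsto\prod_\ell(x-c_\ell y)$, whose components are the elementary symmetric polynomials, is flat; this follows because it is quasi-finite (finite fibers, since $k[c_1,\dots,c_m]$ is a UFD) between regular schemes of the same dimension, hence flat by miracle flatness. If you insert this argument (or an equivalent one, e.g.\ that $k[c_1,\dots,c_m]$ is finite free over the subring of symmetric polynomials) in place of the "open immersion followed by a $\G_m$-bundle" claim, the rest of your proposal — the choice of $S_\star$ as finitely many or all elements of $\PGL_2(k)$ moving $\infty$ off the finitely many additive fibers, surjectivity checked on geometric points, and compatibility of the $\sigma$-twist with the degree of the datum — goes through and coincides with the paper's proof.
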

\begin{proof}
	We claim that the morphism 
	$$(a_0,a_1,\ldots,a_{-4n},b_0,b_1,\ldots,b_{-6n})\mapsto
	\left(a_0\prod_{i=1}^{-4n} (x-a_iy),b_0\prod_{j=1}^{-6n}(x-b_jy)\right)$$
	defines flat morphisms of finite presentation 
	$$\Poly^\star\to\WeierstrassData^{n,\star}$$ whose image consists of those
	Weierstrass data corresponding to fibrations that do not have an additive fiber
	over $\infty$. Assuming this, we see that composing with the elements
	$s\in\PGL_2(k)$ gives a set of morphisms $\rho_s^\star$ covering all of
	$\WeierstrassData^{n,\star}$, since any fibration possesses a smooth (in particular, non-additive)
	$k$-fiber.

	To show flatness, we first note that we can write this as a product of two
	maps (one involving the $a_i$ and one involving the $b_j$). Since the cases
	$a=0$ or $b=0$ add a trivial affine factor, it suffices to show that, in
	general, the morphism
	$$(c_0,c_1,\ldots,c_m)\mapsto c_0\prod_{\ell=1}^m(x-c_{\ell} y)$$ from the
	open subspace $c_0\neq 0$ in $\A^{m+1}$ to the space of forms $V_m$ is flat.
	The map is equivariant for the free scaling action of $\G_m$ on $c_0$, so it
	suffices to show that the induced map
	$$(c_1,\ldots,c_m)\to\prod(x-c_\ell y)$$ is flat. The coordinates on $V_m$ are
	given by the coefficients, which are the elementary symmetric functions in the
	$c_i$. This is then a morphism $\A^m\to\A^m$ with finite fibers (since
	$k[c_1,\ldots,c_m]$ is a unique factorization domain), whence it must be flat
	(since both domain and codomain are regular).
\end{proof}

This decomposition will allow us to study more easily the locus of fibrations
with a given additive fiber configuration.

\subsection{Weierstrass data attached to a family of marked surfaces}\label{sec:gen weierstrass data}

Let $X\to M$ be a family of K3 surfaces equipped with a pair of divisor classes
$a,b\in\Pic(X)$ such that for all geometric points $m\to M$, we have
\begin{enumerate}
  \item $a|_{X_m}$ is the class associated to $\ms O_{X_m}(E)$ for a smooth
  irreducible curve $E\subset X_m$ of genus $1$, and
  \item $a|_{X_m}\cdot b|_{X_m}=1$.
\end{enumerate}
Let $M'\to M$ be the stack whose fiber over $T\to M$ is the groupoid of
Weierstrass fibrations
$$(f:X\times T\to\P^1\times T,\sigma)$$ such that $[f^\ast\ms O(1)]=a$ and $\ms
O_{X\times T}(\operatorname{im}\sigma)=b$ as sections of $\Pic_{X/M}$.

\begin{lem}\label{lem:weierstrassify}
  The morphism $M'\to M$ is smooth with fibers of dimension $3$.
\end{lem}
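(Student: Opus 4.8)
The plan is to reduce the claim to the statement that, for a fixed K3 surface $S$ over a base $T$ with a chosen elliptic class $a$ and a section-class $b$ satisfying $a^2=0$, $a\cdot b=1$, the Weierstrass fibrations on $S$ with invariants $(a,b)$ form a smooth space of dimension $3$ over $T$; the relative statement over $M$ then follows by fppf descent / base change, since smoothness and fiber dimension are local on $M$. So I would first reduce to the case where $M$ is the spectrum of an algebraically closed field and $X=S$ is a single K3 surface with divisor classes $a,b$ as in (1) and (2). Here $a=[\ms O_S(E)]$ for a smooth genus-one curve $E$, so the linear system $|E|$ defines an elliptic fibration $f_0:S\to\P^1$ (in characteristic $\geq 5$ a smooth genus-one curve of square zero on a K3 yields a genus-one pencil, base-point free, by Riemann--Roch and the standard argument that $h^0(\ms O_S(E))=2$). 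The datum of an actual \emph{Weierstrass} fibration over $S$ consists additionally of a section $\sigma$ with image $S\subset X$ whose class is $b$, and (by Proposition \ref{prop:weierstuff}) is equivalent to a Weierstrass datum $(L,a_W,b_W)$ with $L=f_0^\ast\ms O(1)$ having the appropriate degree.

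The next step is to compute the fiber of $M'\to M$ over a field-valued point, i.e.\ to count the Weierstrass data. By the theory recalled in Section \ref{sec:moduli}, once the class $a$ is fixed the fibration $f_0:S\to\P^1$ is determined (up to the automorphisms of $\P^1$ fixing nothing extra), the fundamental line bundle $\mathbb L = (\R^1 f_{0\ast}\ms O_S)^\vee$ is determined, and hence $L\cong \ms O_{\P^1}(n)$ with $n$ fixed (for an elliptic K3, $\deg\mathbb L=2$, so $n=-2$). Fixing in addition the section class $b$ pins down which section of $f_0$ we take — but on a \emph{minimal} elliptic K3 the possible sections form a torsor under the Mordell--Weil group, which for a K3 of Picard number controlled here is discrete (finite), so this contributes nothing to the dimension. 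Thus the fiber of $M'\to M$ is, after these rigidifications, the space of minimal Weierstrass data $(L,a_W,b_W)$ with $L$ fixed and with fixed fibration/section data; concretely, using the cover $U_n\to\WeierstrassData^n$ of Lemma \ref{lem:cov}, this is an open subset of $V_{-4n}\times V_{-6n}$ cut out by the minimality and non-vanishing-of-discriminant conditions, modulo the residual $\G_m$-scaling. For $n=-2$ one has $\dim V_8\times V_{12}=22$, and quotienting by the one-dimensional $\G_m$ gives $21$; but here most of those $21$ directions move the \emph{class} $a$ or the fibration, and only $3$ of them move the Weierstrass model while keeping $S$, $a$, and $b$ fixed. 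Making this last count precise is where I expect to actually do work: I would identify the tangent space to the fiber of $M'\to M$ at a point with a suitable $\H^1$ (deformations of the pair (embedding into a $\P^2$-bundle, section) with fixed target surface), and match it against the deformation theory of Weierstrass equations, using that a deformation of $(f,\sigma)$ fixing the surface $S$ and the classes $a,b$ is the same as an isotrivial-in-$S$ deformation of the Weierstrass datum. Smoothness should come for free from Proposition \ref{prop:weierstuff} (every deformation of a Weierstrass fibration is induced by a deformation of $a_W,b_W$) together with the smoothness of the cover $\chi$ of Lemma \ref{lem:cov}.

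The cleanest route to the dimension $3$, and the one I would actually write up, is probably a moduli-count: $\WeierstrassData^{-2}$ has dimension $21$ (Corollary \ref{lem:wd-dim}); the generic such Weierstrass datum produces an elliptic K3 surface, and the moduli space of such K3 surfaces together with the polarization class $a$ has dimension $18$ (= $20$ minus the $2$ from fixing the class $a$ and — for the supersingular strata of interest — possibly less, but in the generically-ordinary setting relevant here it is $18$); the difference $21-18=3$ is exactly the relative dimension of $M'\to M$, provided one checks the forgetful map $\WeierstrassData^{-2}\to\{\text{K3 + class }a\}$ is (generically) equidimensional and that fixing $b$ in addition is a discrete condition. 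The \textbf{main obstacle} is making this difference-of-dimensions argument rigorous uniformly over the relevant base $M$ — i.e.\ verifying that the fiber dimension is \emph{constantly} $3$ and that $M'\to M$ is flat (hence smooth, given we already know each fiber is smooth), rather than merely generically so; I would handle this by checking that $M'\to M$ is obtained from $\WeierstrassData(\P^1)\to(\text{K3 moduli})$ by the base change along $M\to(\text{K3 moduli})$ cut out by the conditions $[f^\ast\ms O(1)]=a$, $\ms O(\mathrm{im}\,\sigma)=b$, and that this displayed presentation already exhibits $M'\to M$ as smooth of relative dimension $3$ by a direct tangent-obstruction computation (the obstruction group vanishing because Weierstrass deformations are unobstructed, by Proposition \ref{prop:weierstuff}).
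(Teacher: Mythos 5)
There is a genuine gap. The route you say you would actually write up --- the dimension count $\dim\WeierstrassData^{-2}-\dim(\text{K3 moduli with }a)=21-18=3$ --- cannot prove the lemma, because the lemma is a \emph{relative} statement (smoothness with fibers of constant dimension $3$) over an \emph{arbitrary} base $M$ carrying the classes $a,b$, and it is applied in the paper precisely at supersingular points (Picard rank $22$), where the generic elliptic K3 counted by $\WeierstrassData^{-2}$ is irrelevant: a comparison of generic dimensions of global moduli spaces can at best give generic equidimensionality of some forgetful map, not smoothness or the fiber dimension at the points of $M$ one cares about. You flag exactly this as the ``main obstacle'' and defer it to an unspecified tangent--obstruction computation, so the crucial step is not actually carried out. (Also, the intermediate numerology is shaky: fixing the single class $a$ gives $19$, not $18$; you get $18$ only after fixing both $a$ and $b$, which is what you implicitly use.)

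The missing idea --- which is the whole content of the paper's proof --- is to factor $M'\to M''\to M$, where $M''$ parametrizes the fibration with fiber class $a$ but no chosen section. Then $M''\to M$ is a $\PGL_2$-torsor: over a strictly Henselian base, a fibration $f$ with $[f^\ast\ms O(1)]=a$ is the same as a choice of basis of the rank-two sheaf of sections of the line bundle $L$ with class $a$, up to scalars, and these bases are a torsor under $\GL_2$; this is where the three dimensions come from, uniformly at every point, not just generically. And $M'\to M''$ is étale, because a section in class $b$ is a rigid $(-2)$-curve on a K3 and so deforms uniquely. You in fact touch both ingredients in passing (``determined up to automorphisms of $\P^1$''; ``fixing $b$ is a discrete condition''), but you never assemble them into the argument; doing so replaces the global moduli count entirely and gives smoothness of relative dimension $3$ fiberwise over all of $M$, which is what the applications (e.g.\ the codimension bound in the $a=0$ stratum) require.
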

\begin{proof}
  Let $M''\to M$ be the stack whose objects over $T\to M$ are elliptic
  fibrations $X_T\to\P^1_T$ with fibers of class $a$, without a chosen section
  $s$. 

  The morphism $M'\to M$ factors as
  $$M'\to M''\to M$$ by sending $(f:X\to\P^1,\sigma)$ to the fibration
  $(f:X\to\P^1)$ and then to the invertible sheaf $f^\ast\ms O(1)$. We claim
  that $M'\to M''$ is étale and $M''\to M$ is a $\PGL_2$-torsor.

  To see the first assertion, note that $b$ is assumed to exist everywhere over
  $M$, whence the class of the section $s$ is defined everywhere. Since $X\to M$
  is a fibration of K3 surfaces, any section in a fiber with class $b$ will
  locally deform uniquely. It follows that $M'\to M''$ is étale.

  It remains to prove that $M''\to M$ is a $\PGL_2$-torsor. Given a fibration
  $f:X\to\P^1$ with fibers of class $a$, there is a natural action of
  $\alpha\in\PGL_2$ that sends $f$ to $\alpha f$. This is an action on $M''$
  over $M$. To establish that $M=M''/\PGL_2$, it thus suffices to work
  locally and assume that $T$ is a strictly Henselian local scheme. In this
  case, every fibration $f:X_T\to\P^1_T$ with $[f^\ast\ms O(1)]=a$ is given by
  choosing a basis for $\Gamma(X_T,L)$, up to scaling. The bases are permuted
  simply transitively by $\GL_2(T)$. Dividing by scalars gives the desired
  result.
\end{proof}

	Suppose $S_n$ is the Ogus moduli space of marked supersingular K3 surfaces of
	fixed Artin invariant $n$; a point of $S_n$ is a pair
	$(X,\tau:\Lambda_n\simto\NS(X))$ consisting of a supersingular K3 surface and
	an isomorphism of the standard lattice of Artin invariant $n$ with the
	Néron--Severi lattice of $X$. Let $N\to S_n$ be the space of triples
	$((X,\tau), a, b)$ where $(X,\tau)$ is in $S_n$ and $a,b\in\Lambda_n$
	represent a Jacobian elliptic fibration -- $a$ is the class of a smooth genus
	$1$ curve, $b$ is the class of a smooth rational curve, and $a\cdot b=1$.
	There is a countable open cover $V_i\subset N$ such that $V_i$ is of finite
	type over $k$ and $V_i\to S_n$ is étale. There is a $\PGL_2$-torsor $M'_i\to
	V_i$ parametrizing Weierstrass fibrations whose fiber over $((X,\tau),a,b)$ is
	the space of fibrations $f:X\to\P^1,\sigma$ with fiber of class $a$ and
	section of class $b$. Via the equivalence of Proposition
	\ref{prop:weierstuff}, there is a resulting morphism
	$w:M_i'\to\WeierstrassData^{-2}$. Moreover, every Weierstrass datum
	corresponding to a Jacobian elliptic fibration on a supersingular K3 surface
	of Artin invariant $n$ is in the image of one of the $M_i'$.

\begin{cor}\label{cor:artin invariant weierstrassify}
	In the notation of the preceding paragraph, the morphism
	$w:M_i'\to\WeierstrassData^{-2}$ is quasi-finite.
\end{cor}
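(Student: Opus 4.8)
The plan is to show that $w$ is of finite type and has zero-dimensional fibres, which gives quasi-finiteness, since a finite-type scheme of dimension zero over a field is finite. Finite-type-ness is immediate: the target $\WeierstrassData^{-2}$ is of finite type over $k$ by Corollary~\ref{lem:wd-dim}, and the source is of finite type over $k$ by the construction recalled in Paragraph~\ref{para:ogusy}. So the entire content lies in bounding the dimension of the fibres, and the mechanism will be a rigidity property of markings.

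I would fix a geometric point $\delta$ of $\WeierstrassData^{-2}$, that is, a minimal Weierstrass datum over an algebraically closed field $\Omega\supseteq k$, and use Proposition~\ref{prop:weierstuff} to pass to the corresponding Weierstrass fibration $(g_\delta\colon W_\delta\to\P^1_\Omega,\sigma_\delta)$ and its relative minimal resolution $X_\delta\to W_\delta$. If the fibre of $w$ over $\delta$ is nonempty then $\delta$ is the Weierstrass datum of a Jacobian elliptic fibration on a supersingular K3 surface of Artin invariant $n$, so $X_\delta$ is such a surface, with an elliptic fibration $\bar g_\delta\colon X_\delta\to\P^1$ and section whose fibre class and section class I denote $e_1,e_2\in\NS(X_\delta)$. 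The first thing to pin down is that a point of the fibre of $w$ over $\delta$ is precisely a marked supersingular K3 surface $(X,\tau)$ of Artin invariant $n$ (together with its Jacobian elliptic structure) equipped with an isomorphism of elliptic surfaces $X\simto X_\delta$ that matches sections and carries the distinguished classes of $X$ to $e_1$ and $e_2$; apart from this isomorphism, the only remaining datum is the marking $\tau\colon\Lambda_n\simto\NS(X_\delta)$, and any two markings differ by an element of $\Aut(\Lambda_n)$, so the set of possibilities is discrete.

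To upgrade discreteness of the set of geometric points to zero-dimensionality of the fibre, I would argue by contradiction. Suppose $Z$ is a positive-dimensional connected reduced subscheme of the source contained in the fibre of $w$ over $\delta$. Then over $Z$ we have a family of (marked) K3 surfaces $\ms X_Z\to Z$ whose associated family of Weierstrass data is constantly $\delta$; by Proposition~\ref{prop:weierstuff} the corresponding family of Weierstrass fibrations over $\P^1_Z$ is the constant family $W_\delta\times Z$, and therefore $\ms X_Z\cong X_\delta\times Z$, since the relative minimal resolution of a constant family of Weierstrass models is constant. It follows that the relative N\'eron--Severi sheaf $\NS(\ms X_Z/Z)$ is the constant sheaf $\underline{\NS(X_\delta)}$, so the marking $\tau_Z$ is a global section of the constant sheaf $\underline{\Isom(\Lambda_n,\NS(X_\delta))}$, hence is constant on the connected scheme $Z$; the marked fibre and section classes are then constant as well. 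Thus the inclusion of $Z$ into the source factors through a single geometric point, contradicting $\dim Z\geq 1$. Hence every fibre of $w$ is zero-dimensional, and $w$ is quasi-finite.

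The step I expect to require the most care is the passage to families in the previous paragraph: making precise, via the equivalence of Proposition~\ref{prop:weierstuff}, that a constant family of Weierstrass data over a connected base forces not just the associated Weierstrass models but also their relative minimal resolutions --- and therefore the relative N\'eron--Severi sheaves --- to be constant. The other ingredients (finiteness of type, the dimension-zero-over-a-field criterion for finiteness, and the identification of fibres of $w$ with sets of markings) are formal.
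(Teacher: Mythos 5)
Your first move coincides with the paper's: after using Proposition \ref{prop:weierstuff} to identify the fibration over a geometric point $\delta$ with a fixed $(X_\delta,f_\delta,\sigma_\delta)$, the points of the fibre are exactly the possible markings, a set on which $\Aut(\Lambda_n)$ acts simply transitively. Where you diverge is in how this discreteness is converted into quasi-finiteness. The paper just counts: the fibre is identified with a subset of the countable group $\Aut(\Lambda_n)$, and a positive-dimensional finite-type fibre would (after passing to an uncountable algebraically closed field) have uncountably many points; nothing about the scheme structure of the fibre is ever needed. You instead prove zero-dimensionality directly by a rigidity argument with families. That route is legitimate and even has a small advantage — it is insensitive to cardinality considerations — but it is heavier, and the step you yourself flag is exactly the content the paper's argument avoids having to supply: that a family over a reduced connected $Z$ whose associated Weierstrass data is constant is itself a constant family of elliptic K3s. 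This is true but not formal (simultaneous resolutions of families of RDP surfaces are in general neither unique nor existent without base change), so as written your proposal defers its real work. To complete it, one can argue that the relative Isom space $\Isom_{W_\delta\times Z}(\ms X_Z,\,X_\delta\times Z)\to Z$ has exactly one point in each geometric fibre (uniqueness of the minimal resolution over a field), is unramified because $\H^0(X_\delta,T_{X_\delta})=0$ for a K3, and is proper (graph-closure/Matsusaka--Mumford), hence is a surjective proper monomorphism onto the reduced $Z$ and so an isomorphism; alternatively, use that the minimal resolution of an RDP surface is the canonical iterated blow-up of the reduced singular locus, which commutes with the flat base change $W_\delta\times Z\to W_\delta$, and then identify $\ms X_Z$ with that canonical resolution fibrewise.

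Two smaller points. First, your final step ("the inclusion of $Z$ factors through a single geometric point") quietly uses that $S_n$, and hence the source of $w$, is a fine moduli space, so that an isomorphism of the family over $Z$ with a constant family forces the classifying map to be constant; this holds for marked supersingular K3 surfaces (no nontrivial automorphisms acting trivially on $\NS$, by crystalline Torelli), but it is an input that should be named. Second, the statement's $V_i$ should really be the $\PGL_2$-torsor $M'_i$ of Paragraph \ref{para:ogusy} (the paper's own proof works with tuples $((X,\tau),a,b,f,\sigma)$); your description of fibre points, which includes the elliptic structure, implicitly makes the same correction, so this is a notational matter rather than a defect of your argument.
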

\begin{proof}
	It suffices to show that the geometric fibers of $w$ are countable, since
	$M_i'$ and $\WeierstrassData^{2}$ are of finite type over $k$. Suppose 
	$$w((X,\tau),a,b,f,\sigma)=w((X',\tau'),a',b',f',\sigma').$$ This implies that
	there is an isomorphism $g:X\to X'$ such that $f'\circ g=f$ and
	$$g^\ast(\tau'(b'))=\tau(b).$$ Since $(-2)$-curves are rigid, we then have
	that $\sigma\circ g=\sigma'$. In addition, for tautological reasons, we have
	$$g^\ast(\tau'(a'))=\tau(a).$$ (In fact, writing both $a$ and $f$ in the
	notation is redundant, but we are trying to respect $M_i'$'s origin in a tower
	of morphisms.)

	We thus find that $((X',\tau'), a', b', f', \sigma')$ is isomorphic to
	$((X,\tau''), a, b, f, \sigma)$ for some other marking
	$\tau'':\Lambda_n\simto\NS(X)$ such that $\tau''(a)=\tau(a)$ and
	$\tau''(b)=\tau(b)$. Conversely, any marking that preserves the classes of $a$
	and $b$ gives rise to a point with the same image in $\WeierstrassData^{-2}$.
	Thus, we find that the (geometric) fiber of $w$ over $w((X,\tau), a, b, f,
	\sigma)$ is identified with the subset of the automorphism group
	$\Aut(\Lambda_n)$ that fixes $a$ and $b$ and such that the resulting point
	$((X,\tau''),a,b,f,\sigma)$ lies in $M_i'$. Since $\Aut(\Lambda_n)$ is
	countable, we conclude that the fibers of $w$ are countable, as desired.
\end{proof}

\begin{cor}\label{cor:dim cram}
  The locus of Artin invariant $s$ surfaces in $\WeierstrassData^{-2}$ is a
  countable union of constructible subsets, each of which contains a locally
  closed subspace of dimension $s+2$ (equivalently, codimension $19-s$).
\end{cor}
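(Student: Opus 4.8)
The plan is to read the dimension off the tower of moduli spaces constructed in Paragraph \ref{para:ogusy}. Recall from the work of Artin and Ogus that the Ogus space $S_s$ of marked supersingular K3 surfaces of Artin invariant $s$ has dimension $s-1$. For each member $V_i$ of the countable open cover of $N\to S_s$, the structure map $V_i\to S_s$ is étale and $V_i$ is of finite type over $k$, so $\dim V_i = s-1$; since $M_i'\to V_i$ is a $\PGL_2$-torsor by Lemma \ref{lem:weierstrassify}, the scheme $M_i'$ is of finite type over $k$ and $\dim M_i' = (s-1)+3 = s+2$.

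Next I would invoke Corollary \ref{cor:artin invariant weierstrassify}: the induced morphism $M_i'\to\WeierstrassData^{-2}$ is quasi-finite. Its image $Z_i$ is therefore constructible by Chevalley's theorem, and because the fibers are finite the image has the same dimension as the source, namely $\dim Z_i = s+2$. By the last sentence of Paragraph \ref{para:ogusy}, every Weierstrass datum attached to a Jacobian elliptic fibration on a supersingular K3 surface of Artin invariant $s$ lies in the image of some $M_i'$, so the Artin invariant $s$ locus in $\WeierstrassData^{-2}$ is exactly the countable union $\bigcup_i Z_i$. Since a nonempty constructible set of dimension $d$ always contains a locally closed subspace of dimension $d$ (for instance a dense open subset of a top-dimensional component of its closure), each $Z_i$ contains a locally closed subspace of dimension $s+2$. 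Combining this with $\dim\WeierstrassData^{-2}=21$ from Corollary \ref{lem:wd-dim} gives the codimension $21-(s+2)=19-s$.

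Essentially all of the real content has already been isolated in the earlier lemmas, so I do not anticipate a serious obstacle: the one point that genuinely requires the preceding work is that $M_i'\to\WeierstrassData^{-2}$ does not collapse dimension, which is exactly the quasi-finiteness established in Corollary \ref{cor:artin invariant weierstrassify}. The remaining care is purely bookkeeping — checking that $M_i'$ is of finite type over $k$ (so that Chevalley and the comparison of dimensions of source and image apply) and tracking the passage from the Ogus space $S_s$ through $V_i$ and $M_i'$ to $\WeierstrassData^{-2}$.
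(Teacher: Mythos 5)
Your proposal is correct and follows essentially the same route as the paper: both arguments read off $\dim M_i' = (s-1)+3 = s+2$ from the tower $M_i'\to V_i\to S_s$ of Paragraph \ref{para:ogusy}, use the quasi-finiteness of Corollary \ref{cor:artin invariant weierstrassify} to see the map to $\WeierstrassData^{-2}$ does not drop dimension, and conclude with Chevalley's theorem that each image contains a locally closed subspace of dimension $s+2$. Your version simply spells out the bookkeeping (finite type, dimension of images, the codimension count via $\dim\WeierstrassData^{-2}=21$) that the paper leaves implicit.
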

\begin{proof}
  By Corollary \ref{cor:artin invariant weierstrassify}, the space of such
	surfaces is a countable union of images of quasi-finite maps from
	$\PGL_2$-torsors over an étale morphisms to the moduli space of Artin
	invariant $s$ surfaces. The latter space has dimension $s-1$, so each torsor
	has dimension $s+2$. By Chevalley's theorem, the image contains a locally
	closed subspace, which is necessarily of dimension $s+2$, as claimed.
\end{proof}

\subsection{Conditions imposed on $\WeierstrassData^n$ by Kodaira fiber types}
\label{sec:fiber moduli}

To a minimal Weierstrass datum $(\ms O_{\P^1}(n),a,b)$ we can associate a
minimal resolution of the associated Weierstrass fibration, yielding a smooth
relatively minimal elliptic surface $$X(\ms O(n),a,b)\to\P^1.$$ The singular
fibers of such a surface were described by Kodaira in characteristic $0$ and
Tate in positive characteristic \cite{MR0393039}. In this section, we consider
the conditions imposed on $\WeierstrassData^n$ by the presence of specific
singular fibers. We fix a negative integer $n$ and write simply
$\WeierstrassData$ instead of $\WeierstrassData^n$.

In the following, we will write $\Phi$ for a single Kodaira fiber type and $\bPhi$ for
an unordered list of fiber types (possibly with multiplicities). We focus on
fibers of additive type in the Weierstrass fibration. 

\begin{notation}
  Given a list of additive Kodaira fiber types $\bPhi$, we will say that a minimal
  Weierstrass datum $d:=(\ms O(n),a,b)$ \emph{realizes $\bPhi$\/} if for each additive Kodaira fiber type $\Phi$ appearing in $\bPhi$ with nonzero multiplicity $k$, the associated minimal elliptic surface $X(d)\to\P^1$ has precisely $k$ fibers of
  type $\Phi$ (no restriction is imposed on the semistable fibers of $X(d)$). We will write $\WeierstrassData[\bPhi]$ for the locus of Weierstrass data
  realizing $\bPhi$. 
\end{notation}

We can write the basic data $\alpha,\beta,\delta$ sheaf-theoretically for a Weierstrass datum $(\ms
O(n),a,b)$ in terms of the multiplicity of the divisors associated to the
sections $a\in\Gamma(\P^1,\ms O(-2n))$, $b\in\Gamma(\P^1,\ms O(-3n))$, and
$\Delta\in\Gamma(\P^1,\ms O(-6n))$.

\begin{lem}
	The locus $\WeierstrassData[\bPhi]\subset\WeierstrassData$ is locally closed.
\end{lem}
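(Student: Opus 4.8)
The plan is to reduce the statement to the local closedness of finitely many conditions on the orders of vanishing of the sections $a$, $b$, and $\Delta$. First I would pass to the concrete presentation: by Lemma \ref{lem:cov}, the smooth cover $\chi\colon U_n\to\WeierstrassData^n$ and, refining further, the flat coverings $\rho_s^\star\colon\Poly^\star\to\WeierstrassData^{n,\star}$ of Lemma \ref{lem:coverings} have the property that the condition ``realizes $\bPhi$'' pulls back to the same kind of condition on Weierstrass equations in the explicit form \eqref{eq:weier}. Since local closedness can be checked fppf-locally (it is fppf-local both to be closed and to be open), it suffices to prove that the preimage of $\WeierstrassData[\bPhi]$ in each $\Poly^\star$ is locally closed.

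Over $\Poly^\star$, the Weierstrass equation \eqref{eq:weier} is given explicitly, and the divisor $\FT$-data is controlled by the multiplicities $\ord_y a$, $\ord_y b$, $\ord_y\Delta$ at the points $y\in\P^1$ lying in the (now explicit, linearly-parametrized) vanishing loci of $a$, $b$, and $\Delta$. By Tate's algorithm as recalled just above the statement (using $p\geq 5$), the Kodaira type at a point $y$ is a function purely of the triple $(\ord_y a,\ord_y b,\ord_y\Delta)$: away from the locus where $\alpha=2,\beta=3$ it depends only on $v(\Delta)$ via the third row of Table \ref{table:1}, and on the $\I_n^\ast$ locus it is determined by $v(\Delta)-6$. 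So to realize a fixed list $\bPhi$ is to ask that, at a prescribed number of points of $\P^1$, the triple of orders takes one of finitely many prescribed values, and that at all other points of the discriminant the fiber is semistable (i.e.\ $\ord a=0$ or $\ord b=0$). Each such ``order of vanishing exactly $m$'' condition is locally closed: $\ord\geq m$ is closed, and one intersects with the open complement of $\ord\geq m+1$. The ``number of such points equals $k$'' and ``no other additive point'' conditions are handled as in the proof of Lemma \ref{lem:minimal is open}: after a finite flat base change one writes the vanishing loci of $a$, $b$, $\Delta$ as sums of sections $\sum a_i s_i$, $\sum b_j t_j$, looks at the images $Z_{ij}$ of the intersections $s_i\cap t_j$, and expresses $\WeierstrassData[\bPhi]$ as a finite boolean combination of the $Z_{ij}$ cut out by the numerical data $(a_i,b_j)$ and the value of $v(\Delta)$ there — a constructible set which is in fact locally closed because all the conditions involved are intersections of a closed ``$\ord\geq m$'' condition with an open ``$\ord< m+1$'' condition. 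Noetherian induction (again as in Lemma \ref{lem:minimal is open}) upgrades constructibility over the stratifying pieces to local closedness on the nose.

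The main obstacle is bookkeeping rather than any serious geometric input: one must be careful that ``realizes $\bPhi$'' is genuinely the conjunction of (i) local type conditions at a varying but bounded set of points, each of which is locally closed by the order-of-vanishing argument, and (ii) the global constraint that there are no additive fibers other than those prescribed, which is closed (it says the residual part of $\Delta$ has no multiple common zero with $a$ and $b$ beyond what $\bPhi$ allows). The subtlety in (i) is the $\I_n^\ast$ stratum, where the type is not a function of $v(\Delta)$ alone but requires the extra equalities $\alpha=2$, $\beta=3$; however, $\{\alpha=2,\beta=3\}=\{\ord a=2\}\cap\{\ord b=3\}$ is again a locally closed condition, and intersecting it with $\{v(\Delta)=n+6\}$ cuts out exactly the locus with a type-$\I_n^\ast$ fiber at that point. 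Assembling these finitely many locally closed pieces and using that a finite intersection of locally closed subsets is locally closed gives the claim on each $\Poly^\star$, hence on $\WeierstrassData^{n,\star}$, hence on $\WeierstrassData$.
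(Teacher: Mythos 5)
Your opening moves are fine and agree with the paper: reducing along the fppf coverings to the explicit spaces $\Poly^\star$, and translating ``realizes $\bPhi$'' into conditions on the vanishing orders of $a$, $b$, $\Delta$ via Tate's algorithm (with the $\I_n^\ast$ caveat). The genuine gap is in your last step, where you pass from a boolean combination of order-of-vanishing conditions to local closedness. The technique of Lemma \ref{lem:minimal is open} does not transfer: there, constructibility was upgraded to \emph{openness} because minimality is stable under generization, whereas the locus $\WeierstrassData[\bPhi]$ has no such stability in either direction (an additive fiber can disappear under generization, and two additive fibers can collide under specialization), so it is neither open nor closed, and a constructible set is in general not locally closed. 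Moreover, the counting conditions (``exactly $k$ points with these exact orders, and no further additive points'') are only expressible after stratifying, shrinking, and passing to finite covers where the zero loci split into sections, and then taking images; images of merely locally closed (not closed, not obviously saturated) sets under such covers, and the finite unions over the ways of matching fiber types to points, only yield constructibility. So as written your argument proves that $\WeierstrassData[\bPhi]$ is constructible, which is weaker than the statement.

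The missing idea---and the pivot of the paper's proof---is to exhibit an explicit \emph{open} subset inside which the condition becomes \emph{closed}. With $m$ the number of fibers in $\bPhi$, one first restricts to the open locus $U\subset\Poly$ over which $Z(a)\cap Z(b)\cap Z(\Delta)$ has exactly $m$ geometric points in each fiber (exactly $m$ additive fibers); over $U$ one passes to the finite étale $S_m$-torsor splitting these points into sections $c_1,\dots,c_m$, and imposes at each $c_i$ the order-of-vanishing conditions read off from Table \ref{table:1}, which are closed conditions there. The union over $S_m$ of these loci is $S_m$-invariant and closed, hence has closed image in $U$, and this image is $\Poly[\bPhi]$; being closed in the open set $U$, it is locally closed, and this descends along the flat covers. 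If you want to keep your exact-order formulation, you must still produce such an open $U$ and verify that your locus is closed in it (or saturated and locally closed upstairs before taking images); constructibility plus Noetherian induction cannot substitute for that step.
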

\begin{proof}
	% Some authors would "leave this as an exercise to the reader"

	It suffices to prove this statement for $\Poly[\bPhi]\subset\Poly$. There is a
	universal Weierstrass datum $(\ms O(n), a, b)$ over $\Poly$. Let
	$\bPhi=(\Phi_1,\ldots,\Phi_m)$. Let $\Delta\in\Gamma(\P^1\times\Poly,\ms
	O(-6n))$ be the discriminant, and consider the scheme $Z=Z(a)\cap Z(b)\subset\P^1\times\Poly$. The natural morphism $Z\to\Poly$ is finite
	but not necessarily flat. There is an open locus $U\subset\Poly$ over which
	the geometric fibers of $Z$ have precisely $m$ connected components,
	corresponding to precisely $m$ additive fibers (because the coefficients $a$ and $b$ of the Weierstrass equation have a common zero at precisely those points). The étale-local structure
	theory for finite morphisms tells us that there is a factorization $Z_U\to
	Y\to U$ with $Y\to U$ finite étale of degree $m$ and $Z_U\to Y$ radicial.
	Replacing $U$ with $\Isom(Y,\sqcup_{i=1}^{m} U)$ gives an $S_m$-torsor $I\to
	U$ over which $Y$ splits. Replacing $U$ with $I$, we may assume that there are
	sections $c_1,\ldots,c_m\in\P^1(U)$ that support the $m$ additive fibers.

	Choose a bijection $\sigma\in S_m$. For each fiber type $\Phi_i$ appearing in $\underline{\Phi}$, the condition that $a_{\sigma(i)}$
	supports $\Phi_i$ is equivalent to a certain requirement on the orders of vanishing of $a$, $b$, and $\Delta$ at $c_i$, as determined by Table \ref{table:1}. 
	%(For some fiber types, this sets a lower bound on the vanishing of $a$ or $b$ at the
	%point in question, and we mean that the section vanishes to at least that order at that point).
	For each $\Phi_i$ the resulting vanishing conditions determine a locally closed locus in $I$.
	Taking the union over all $\sigma\in S_m$ gives an $S_m$-invariant locally closed set
	of $I$ whose image in $\Poly$ is precisely $\Poly[\bPhi]$. This shows that
	$\Poly[\bPhi]$ is locally closed in $U$, as desired.
\end{proof}

The theory of Section \ref{sec:univ poly} gives a way to calculate the
codimension of $\WeierstrassData[\bPhi]$. Given an additive fiber configuration
$\bPhi$, let $\Poly[\bPhi]\subset\Poly$ denote the closed subspace over which
\eqref{eq:weier} has additive fiber configuration of type $\bPhi$, so that
$\Poly[\bPhi]$ maps to $\WeierstrassData[\bPhi]$.

\begin{lem}\label{lem:codim}
  For any additive fiber configuration $\bPhi$, we have
  $$\codim(\WeierstrassData[\bPhi]\subset\WeierstrassData)=\codim(\Poly[\bPhi]\subset\Poly).$$
\end{lem}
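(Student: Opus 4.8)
The plan is to compare the codimension of $\WeierstrassData[\bPhi]$ inside $\WeierstrassData = \WeierstrassData^n$ with the codimension of $\Poly[\bPhi]$ inside $\Poly$ by exploiting the covering maps constructed in Section~\ref{sec:univ poly}. Recall from Lemma~\ref{lem:coverings} that for each of the three strata $\star\in\{ab\neq 0, a=0, b=0\}$ there is a collection of flat morphisms of finite presentation $\rho_s^\star\colon\Poly^\star\to\WeierstrassData^{n,\star}$ forming an fppf covering, and that the pullback of the universal Weierstrass equation along each $\rho_s^\star$ is isomorphic to \eqref{eq:weier}. Since the additive fiber configuration is determined by the Weierstrass equation, the preimage $(\rho_s^\star)^{-1}(\WeierstrassData[\bPhi])$ is precisely the locus in $\Poly^\star$ where \eqref{eq:weier} has additive fiber configuration $\bPhi$, i.e.\ it equals $\Poly^\star\cap\Poly[\bPhi]$. (Here one should note that $\bPhi$ consisting entirely of additive types means that membership in $\WeierstrassData[\bPhi]$ or $\Poly[\bPhi]$ is a condition on the additive part of the fiber configuration, which is intrinsic to the equation.)

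First I would dispose of a bookkeeping point: the three strata of $\Poly$ (and correspondingly of $\WeierstrassData^n$) are handled separately, and since codimension of a constructible set is the minimum of its codimensions in each stratum, it suffices to prove the equality stratum by stratum. So fix $\star$ and one of the covering maps $\rho=\rho_s^\star\colon\Poly^\star\to\WeierstrassData^{n,\star}$. Because $\rho$ is flat of finite presentation, it is open, and more importantly it has the property that for any locally closed subset $W\subseteq\WeierstrassData^{n,\star}$ one has $\codim(\rho^{-1}(W)\subseteq\Poly^\star)=\codim(W\subseteq\WeierstrassData^{n,\star})$: this is the standard fact that a flat morphism is equidimensional on the relevant components, so codimension is preserved under flat pullback (cf.\ the behaviour of dimension under flat base change, e.g.\ \cite[\href{https://stacks.math.columbia.edu/tag/02JU}{Tag 02JU}]{stacks-project} and its consequences for codimension). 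Applying this with $W=\WeierstrassData[\bPhi]\cap\WeierstrassData^{n,\star}$ and using the identification $\rho^{-1}(W)=\Poly^\star\cap\Poly[\bPhi]$ from the previous paragraph gives
$$\codim\big(\Poly^\star\cap\Poly[\bPhi]\subseteq\Poly^\star\big)=\codim\big(\WeierstrassData[\bPhi]\cap\WeierstrassData^{n,\star}\subseteq\WeierstrassData^{n,\star}\big).$$
Taking the minimum over the (finitely many) strata $\star$, the left side becomes $\codim(\Poly[\bPhi]\subseteq\Poly)$ and the right side becomes $\codim(\WeierstrassData[\bPhi]\subseteq\WeierstrassData)$, which is the claim.

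The main obstacle I anticipate is not any single hard computation but rather pinning down the precise sense in which "flat covering" guarantees codimension is preserved: one must be careful that the $\rho_s^\star$ are \emph{surjective} (as an fppf covering they are jointly surjective, which is what one actually needs after taking the minimum over $s$) and that flat pullback of a locally closed set does not drop codimension even when $\Poly^\star$ and $\WeierstrassData^{n,\star}$ are not irreducible. A clean way to handle this is to work component-by-component: over each connected component of $\WeierstrassData^{n,\star}$, which is an irreducible smooth stack of some dimension $d$, the preimage in $\Poly^\star$ is a union of irreducible opens each of dimension $d+r$ where $r=1$ is the relative dimension (the fibers of $\rho_s^\star$ are $\PGL_2$-orbits, or rather the morphism factors through the $\PGL_2$-action and a flat map with fibers of fixed dimension coming from the parametrization of polynomials by their roots as in the proof of Lemma~\ref{lem:coverings}); then a locally closed subset of codimension $c$ in the base pulls back to one of codimension $c$ in each such open. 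Once this is set up the equality is formal. An alternative, if one prefers to avoid stacky dimension theory, is to pass to the concrete cover $U_n\to\WeierstrassData^{-2}$ of Lemma~\ref{lem:cov} together with the maps of Lemma~\ref{lem:coverings}, reducing everything to a statement about flat morphisms of finite-type $k$-schemes, where the needed codimension invariance is entirely standard.
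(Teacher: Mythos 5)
Your core argument is the same as the paper's: the proof there is the one-line observation that the lemma follows from Lemma \ref{lem:coverings}, i.e.\ that the $\rho_s^\star$ are jointly surjective, flat, finite-presentation covers under which the preimage of $\WeierstrassData[\bPhi]$ is the corresponding locus in $\Poly^\star$, so that codimension is preserved stratum by stratum. Your preimage identification and your use of flatness within a fixed stratum are fine (the aside that the fibers of $\rho_s^\star$ are ``$\PGL_2$-orbits'' is off --- on the open stratum they are essentially $\G_m$-orbits together with finite root-permutation data, and $\PGL_2$ only indexes the family of covers --- but this does not affect anything).

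The genuine problem is the bookkeeping step you use to pass from strata to the whole space. It is not true that the codimension of a locally closed subset is the minimum of its codimensions in the strata; the correct formula is $\codim(W\subset\Poly)=\min_\star\bigl[\codim(W\cap\Poly^\star\subset\Poly^\star)+\codim(\Poly^\star\subset\Poly)\bigr]$, and likewise on the $\WeierstrassData$ side. This matters here because the stratum codimensions do not match on the two sides of your dictionary: for $n=-2$ the loci $\Poly^{a=0}$ and $\Poly^{b=0}$ are the hypersurfaces $a_0=0$ and $b_0=0$, of codimension $1$ in $\Poly$, whereas $\WeierstrassData^{-2,a=0}$ and $\WeierstrassData^{-2,b=0}$ have codimension $9$ and $13$ in $\WeierstrassData^{-2}$; the discrepancy comes from the redundant root coordinates $a_1,\dots,a_8$ (resp.\ $b_1,\dots,b_{12}$) that persist in $\Poly$ on those strata. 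Concretely, for $\bPhi=12\,\II$, which is realizable only with $a\equiv 0$, the locus $\Poly[\bPhi]$ has codimension $1$ in $\Poly$ while $\WeierstrassData[\bPhi]$ has codimension $9$ in $\WeierstrassData^{-2}$, so the global equality cannot be obtained from the stratum-wise equalities by ``taking the minimum.'' The stratum-wise statement you do prove is exactly what the paper's argument via Lemma \ref{lem:coverings} gives and is all that is used downstream (Propositions \ref{prop:fiber mass} and \ref{prop:codimension bounds using table} only concern the $ab\neq 0$ stratum), so either formulate and prove the lemma for a fixed stratum, or add the stratum codimensions explicitly and address the mismatch rather than minimizing over strata.
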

\begin{proof}
  This follows immediately from Lemma \ref{lem:coverings}.
\end{proof}

\begin{prop}\label{prop:fiber mass}
	For a list $\bPhi$ of additive fibers, the codimension of
  $\WeierstrassData^{ab\neq 0}[\bPhi]\subset\WeierstrassData^{ab\neq 0}$ is at
  least equal to $\zeta(\bPhi)$ (see Table \ref{table:1}).
\end{prop}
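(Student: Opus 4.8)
The plan is to reduce, via Lemma~\ref{lem:codim}, to computing the codimension of $\Poly[\bPhi]^{ab\neq 0}\subset\Poly^{ab\neq 0}$, and then to count the conditions imposed on the universal Weierstrass equation \eqref{eq:weier} by the presence of each additive fiber $\Phi$ in the list $\bPhi$. The key observation is that the coordinates on $\Poly^{ab\neq 0}$ are the \emph{roots} $a_1,\dots,a_{-4n}$ of the section $a$ and the roots $b_1,\dots,b_{-6n}$ of the section $b$ (together with the scalars $a_0,b_0$), so ``$a$ vanishes to order $\alpha$ at a point $c$'' becomes the condition that $\alpha$ of the $a_i$ are equal to $c$, and similarly for $b$. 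The proof then amounts to bookkeeping: for each fiber type $\Phi$ occurring in $\bPhi$ at an (a priori varying) point $c\in\P^1$, forcing the required collision of roots of $a$ (resp.\ $b$) to order $\alpha(\Phi)$ (resp.\ $\beta(\Phi)$) from Table~\ref{table:1} costs $\alpha(\Phi)-1$ conditions among the $a_i$ and $\beta(\Phi)-1$ conditions among the $b_j$, while the location $c$ itself is a free parameter, giving back one dimension. So the net codimension contributed by a single fiber $\Phi$ is at least $(\alpha(\Phi)-1)+(\beta(\Phi)-1)-1=\alpha(\Phi)+\beta(\Phi)-3$.

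First I would set up, as in the proof of Lemma~\ref{lem:coverings}, the identification of the stratum $\Poly^{ab\neq 0}$ with the (open subset of the) product of affine spaces of roots, passing if necessary to the $S_m$-torsor $I\to U$ from the locally-closedness lemma so that the $m$ additive points $c_1,\dots,c_m$ are given by sections and the fiber types are matched up with an explicit bijection $\sigma\in S_m$; codimension is insensitive to this finite étale base change, and the $\PGL_2$-orbit issue is handled by the ``$ab\neq 0$'' hypothesis exactly as before. Then, working point-by-point, I would verify the count above using the entries of Table~\ref{table:1}: going through the seven additive types one checks that $\alpha(\Phi)+\beta(\Phi)-3$ is at least $n(\Phi)$ in every case (e.g.\ $\II$: $1+1-3=-1$ versus $n=1$ — wait, this needs the refined bound, see below; $\III$: $1+2-3=0$ vs $n=2$; $\IV^\ast$: $3+4-3=4$ vs $n=6$; $\II^\ast$: $4+5-3=6$ vs $n=8$), so the naive root-collision count is \emph{not} by itself enough, and one must also use the constraint on the \emph{discriminant} valuation $v(\Phi)$ from the third row of the table.

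Here is the main obstacle and its resolution. The extra input needed is that an additive fiber of type $\Phi$ forces not merely the vanishing orders $\alpha(\Phi),\beta(\Phi)$ of $a$ and $b$ but also the vanishing order $v(\Phi)$ of $\Delta=4a^3+27b^2$ at $c$; since the $I_n^\ast$ case aside $v(\Phi)=\min(3\alpha(\Phi),2\beta(\Phi))$ is forced by generic cancellation, while for the ``$\geq$'' rows one imposes the stated value as a closed condition, the true number of conditions is governed by $v(\Phi)$ and one checks case by case from Table~\ref{table:1} that the resulting codimension is at least $n(\Phi)$. Concretely: an additive fiber at a movable point $c$ imposes that $\Delta$ vanish to order $v(\Phi)$ there, which is $v(\Phi)-1$ conditions after subtracting the one free parameter for $c$, and one verifies $v(\Phi)-1\geq n(\Phi)$ in every row except possibly type $\II$ where $v=2$, $n=1$, and the bound $v-1=1\geq 1=n$ holds with equality. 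Summing over the list $\bPhi$, and noting that conditions imposed at distinct points $c_i$ are independent (the $c_i$ being generically distinct sections over $I$), yields
$$\codim\bigl(\WeierstrassData^{ab\neq 0}[\bPhi]\subset\WeierstrassData^{ab\neq 0}\bigr)\;\geq\;\sum_{\Phi\in\bPhi}n(\Phi),$$
which is the assertion. The one genuinely delicate point — the ``hard part'' — is making the independence of the conditions at the different $c_i$ precise: after passing to $I$, where the $c_i$ are given by honest disjoint sections of $\P^1\times I\to I$, the vanishing conditions on $a$, $b$, $\Delta$ at $c_i$ involve disjoint sets of Taylor coefficients (in a local coordinate centered at $c_i$), so their loci meet transversally and the codimensions add; this is where one uses that we are on the open stratum $\Poly^{ab\neq 0}$ and that the additive points are distinct, and it is exactly the transversality bookkeeping that replaces a clean product decomposition.
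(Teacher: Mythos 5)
Your bookkeeping goes wrong at two points, and the second one is fatal. First, the root-collision count: forcing $\alpha$ of the $a_i$ and $\beta$ of the $b_j$ to collide \emph{at a common point} costs $(\alpha-1)+(\beta-1)+1=\alpha+\beta-1$ conditions, not $(\alpha-1)+(\beta-1)-1$. The collisions among the $a_i$ and among the $b_j$ each leave their common value free, so there is no dimension to ``give back''; instead one must \emph{add} one condition forcing the two collision points to coincide. Since $n(\Phi)=\alpha(\Phi)+\beta(\Phi)-1$ for every row of Table \ref{table:1}, the naive root-collision count, done correctly, already yields exactly the bound $\sum n(\Phi_i)$ — and this is precisely the paper's proof: the locus $\Poly^{ab\neq 0}[\bPhi]$ is covered by the images of explicit closed immersions $\iota_\pi$ of affine spaces $F(\bPhi)$ of dimension $m-10n+2-\sum(\alpha_i+\beta_i)$, obtained by identifying $\alpha_i$ of the $a$-roots and $\beta_i$ of the $b$-roots with a single coordinate $a_i'$, so the codimension is $\sum(\alpha_i+\beta_i-1)=\sum n(\Phi_i)$. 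Your miscount is what drives you to the discriminant.

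The discriminant pivot does not work. The claim that a type-$\Phi$ fiber at a movable point forces codimension at least $v(\Phi)-1$ is false for the types with $v(\Phi)-1>n(\Phi)$: for $\II^\ast$ the locus has codimension $\alpha+\beta-1=8$ (take $\ord_c a\geq 4$, $\ord_c b=5$ at a movable $c$), not $\geq 9$; similarly $\III^\ast$, $\IV^\ast$, and $\I_n^\ast$ have codimension $7,6,4$ rather than $8,7,\geq 5$. The reason is that the ``$v(\Phi)$ conditions on Taylor coefficients of $\Delta$'' are highly dependent once $a$ and $b$ are constrained: $\Delta=4a^3+27b^2$ vanishes to order $\min(3\alpha,2\beta)$ for free as soon as $a,b$ vanish to orders $\alpha,\beta$. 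More fundamentally, counting equations can only bound codimension from \emph{above} (Krull's height theorem: $N$ equations cut codimension at most $N$); to get the lower bound the proposition asserts you must exhibit the locus as (contained in) the image of something of controlled dimension, which is what the paper's parametrization by the $F(\bPhi)$ does and what your transversality sketch does not supply. (Your independence-across-points discussion is also looser than needed — Taylor coefficients at distinct points are not disjoint coordinates — but that issue evaporates once one uses the explicit parametrization, where the distinct marked roots $a_1',\dots,a_m'$ are simply independent coordinates.)
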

\begin{proof}
	It suffices to prove the corresponding statement for $\Poly^{ab\neq
	0}[\bPhi]\subset\Poly^{ab\neq 0}$. Consider the universal Weierstrass equation
	of type $\bPhi$, given as follows:
	$$y^2=x^3+a'_0\left(\prod_{i=1}^m(t-a'_i)^{\alpha_i}\prod_{j=m+1}^{-4n+m-\sum\alpha_i}(t-a'_j)\right)x+b'_0\prod_{i=1}^m(t-a'_i)^{\beta_i}\prod_{j=m+1}^{-6n+m-\sum\beta_i}(t-b'_j).$$
	Here the first $m$ roots of both coefficients are specified by the fiber types
	and the $\alpha$ and $\beta$ values of Table \ref{table:1}. The first factor
	in each coefficient is required to have distinct roots. The second factors in
	each coefficient are allowed to have roots coinciding with the first factor
	whenever the corresponding entry in Table \ref{table:1} has a $\geq\alpha_i$
	or $\geq\beta_i$ (depending upon the fiber type). 
	
	The locus $F(\bPhi)$ so described is open in the affine space
	$\A^{m-10n+2-\sum(\alpha_i+\beta_i)}$ with coordinates
	$$(a'_0,a'_1,\ldots,a'_{m-4n-\sum\alpha_i},
	b'_{m+1},\ldots,b'_{m-6n-\sum\beta_i}).$$ Choose a partition $\pi$ of the
	variables $(a_1,\ldots,a_{-4n})$ into $m$ subsets $A_i$, $i=1,\ldots,m$ of
	sizes $\alpha_i$, $i=1,\ldots,m$ and $-4n-\sum\alpha_i$ singletons $A'_j$, and
	a partition of $(b_1,\ldots,b_{-6n})$ into $m$ subsets $B_i$, $i=1,\ldots,m$
	of sizes $\beta_i$, $i=1,\ldots,m$ and $-6n-\sum\beta_i$ singletons $B'_j$.
	Given such a partition, we can define a closed immersion
	$$\iota_\pi:\A^{m-10n+2-\sum(\alpha_i+\beta_i)}\to\A^{-10n+2}_{a_0,\ldots,a_{-4n},b_0,\ldots,b_{-6n}}$$
	by the ring map sending all elements of $A_i$ and $B_i$ to $a'_i$, all of the
	singletons $A'_j$ to the $a'_j$, and all of the singletons $B'_j$ to $b'_j$.
	The union of the images of $F(\bPhi)$ under all of these maps is precisely
	$\Poly[\bPhi]$. The codimension is thus 
	$$-2+\sum(\alpha_i+\beta_i)=\sum(\alpha_i+\beta_i-1)\geq \sum \zeta(\Phi_i).$$ The
	last equality follows from the definition of $\zeta(\Phi)$, as
	one can see in Table \ref{table:1}.
\end{proof}

\section{The non $\infty$-Frobenius split locus in $\WeierstrassData^{-2}$}
\label{sec:non-pi-inj}

In this section, we establish estimates on the codimension of the supersingular
and non-$\infty$-Frobenius split loci in $\WeierstrassData^{-2}$. We split this
analysis into various pieces, which will be assembled in Proposition
\ref{prop:non-pi-inj-small-total} to obtain our final estimate.

\begin{notation}\label{notation:the number B of a configuration}
	Suppose that $\bPhi=(\Phi_1,\dots,\Phi_n)$ is a configuration of additive
	fibers. Let $\lambda_i=\delta(\Phi_i)$, and suppose that the $\Phi_i$ are ordered
	such that the sequence $(\lambda_1,\dots,\lambda_n)$ is non-increasing. We
	write $B=B_{13}(\lambda_1,\dots,\lambda_n)$ for the integer defined by
	(\ref{eq:the number B}).
\end{notation}

\begin{prop}\label{prop:codimension bounds using table}
   The locus in $\WeierstrassData^{-2,ab\neq 0}[\bPhi]$ parametrizing fibrations
   realizing $\bPhi$ that are not $\infty$-Frobenius split is a countable union
   of locally closed subspaces of codimension at least $B+\zeta(\bPhi)$ at every
   point.
\end{prop}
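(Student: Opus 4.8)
The plan is to combine the two codimension estimates already established: the fiber-configuration bound from Proposition \ref{prop:fiber mass} (codimension at least $n(\bPhi)$ for $\WeierstrassData^{-2,ab\neq0}[\bPhi]\subset\WeierstrassData^{-2,ab\neq0}$) and the Frobenius-split divisor bound packaged in Corollary \ref{cor:codim-case-13} (via Proposition \ref{prop:frob-split-crude} and Corollary \ref{cor:split powers}). The key translation, supplied by Proposition \ref{prop:cohoinj}, is that a Jacobian elliptic K3 surface $f:X\to\P^1$ fails to be $\infty$-Frobenius split exactly when, for some $e>0$, the divisor $\tfrac{p^{2e}-1}{12}\FT(f)$ fails to be $p^{2e}$-split. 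So the non-$\infty$-Frobenius split locus inside $\WeierstrassData^{-2,ab\neq0}[\bPhi]$ is, up to the covering by the concrete polynomial spaces $\Poly$ of Section \ref{sec:univ poly}, precisely the pullback of the loci $Z_{2e}$ of Corollary \ref{cor:codim-case-13} applied to the divisor $\FT(f)$, whose multiplicities are the $\lambda_i=v(\Phi_i)$ of Table \ref{table:1}.

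First I would reduce to $\Poly^{ab\neq0}[\bPhi]$ using Lemma \ref{lem:coverings} and Lemma \ref{lem:codim}, so that codimensions can be computed on honest affine spaces and then transported back to the stack. On $\Poly^{ab\neq0}[\bPhi]$, the discriminant divisor $\FT(f)$ is (after the reparametrization in the proof of Proposition \ref{prop:fiber mass}) of the form $D_{\lambda_1,\dots,\lambda_n}(z_1,\dots,z_n)$ with $\sum\lambda_i=v(\bPhi)\leq 24$, the upper bound being the total discriminant degree $-6n=12$ forced on an elliptic K3 surface — wait, more precisely $\sum v(\Phi_i)\le 24$ since the additive contributions to a degree-$12\cdot(-n)=24$ discriminant cannot exceed $24$. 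Then Corollary \ref{cor:codim-case-13} says the locus of $(z_1,\dots,z_n)$ for which $\tfrac{p^{2e}-1}{12}D(z)$ is not $p^{2e}$-split for some $e$ is an ascending union of closed subschemes $Z_{2e}$, each of codimension at least $B=B_{13}(\lambda_1,\dots,\lambda_n)$, inside the parameter space $\A^n$ of the roots $z_i$. Pulling this back along the projection that remembers only the additive roots (the map $\iota_\pi$ of Proposition \ref{prop:fiber mass}, which is a closed immersion with the non-additive roots as free parameters), each $Z_{2e}$ contributes a locally closed piece of codimension at least $B$ inside $\Poly^{ab\neq0}[\bPhi]$; intersecting with the already-established codimension-$n(\bPhi)$ inclusion of $\Poly^{ab\neq0}[\bPhi]$ in $\Poly^{ab\neq0}$ gives codimension at least $B+n(\bPhi)$ at every point, since codimensions of the transverse product of a "fiber-configuration" constraint (on $z_i$'s that are forced to collide) and a "split" constraint (on the values of those $z_i$'s) add. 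Finally, transporting along the flat covering $\rho_s^{ab\neq0}:\Poly^{ab\neq0}\to\WeierstrassData^{-2,ab\neq0}$ of Lemma \ref{lem:coverings} preserves these codimension bounds (flat morphisms of finite presentation preserve codimension), and the countable union over $e$ and over the finitely many covering maps $s$ produces the asserted countable union of locally closed subspaces.

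The main obstacle I expect is the bookkeeping needed to see that the two codimension bounds genuinely add rather than merely that the larger of the two applies. One has to be careful that the "$B$" bound of Corollary \ref{cor:codim-case-13}, which is stated as a codimension inside the root space $\A^n$ with the $\lambda_i$'s \emph{fixed}, really cuts out a further-codimension-$B$ subspace \emph{inside} $\Poly^{ab\neq0}[\bPhi]$, i.e. that the extra non-additive roots (which are free in $\Poly^{ab\neq0}[\bPhi]$ and do not appear among the $z_i$ of the split calculation) do not interfere, and that the collision conditions defining $\bPhi$ do not accidentally force the divisor $D$ into the split locus for free. The structure of $\iota_\pi$ in Proposition \ref{prop:fiber mass} — sending the grouped $a$- and $b$-roots to the \emph{same} value $a'_i$, which is exactly the root $z_i$ with multiplicity $\lambda_i=v(\Phi_i)$ appearing in $\FT(f)$ — makes this work out, but spelling it out carefully is the crux of the argument. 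Everything else is a routine application of the already-proven machinery.
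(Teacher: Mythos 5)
Your proposal is correct and follows essentially the same route as the paper: reduce via the flat covers of Lemma \ref{lem:coverings} to the affine charts of $\Poly^{ab\neq 0}[\bPhi]$ built in Proposition \ref{prop:fiber mass}, project (flatly) onto the $\A^n$ of additive fiber locations carrying the divisor with multiplicities $\lambda_i=v(\Phi_i)$, apply Corollary \ref{cor:codim-case-13} there (with Proposition \ref{prop:cohoinj} and Lemma \ref{lem:level lowering} providing the translation to $\infty$-Frobenius splitness), and add the codimension $n(\bPhi)$ of the stratum to the codimension $B$ inside it. The only blemishes are cosmetic: the relevant map to $\A^n$ is the coordinate projection from the chart $F(\bPhi)$ rather than $\iota_\pi$ itself, and the countability comes from the union over $e$ (the non-$2e$-split locus being intrinsically defined on the stack), not from there being finitely many covering maps.
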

\begin{proof}
   Let $\Poly^{ab\neq 0}[\bPhi]$ be the preimage of $\WeierstrassData^{-2,ab\neq
   0}[\bPhi]$ under the flat cover of Lemma \ref{lem:coverings}. As in the proof
   of Proposition \ref{prop:fiber mass}, it is covered by copies of
   $\A^{22-\zeta(\bPhi)}$ with coordinates
   $$(a_0',a_1',\ldots,a'_{8+n-\sum\alpha_i},b_0',b'_1,\ldots,b'_{12+n-\sum\beta_i}).$$
   To such a point, we associate the polynomial
   $$
   \prod_{i=1}^n(t-a'_is)^{\lambda_i}.
   $$
   This is a projection onto the affine space $\A^n$ parametrizing polynomials
   of the form
   \[
     P(s,t)=\prod_{i=1}^n(t-z_is)^{\lambda_i}
   \]
   We can thus cover $\Poly^{ab\neq 0}[\bPhi]$ by components that are flat over
   $\A^n$. To prove the Proposition, it thus suffices to prove a corresponding
   codimension statement for certain subsets of $\A^n$.

   By Corollary \ref{cor:codim-case-13}, the locus in $\A^n$ parametrizing
   polynomials $P$ such that $P^{\frac{p^{2e}-1}{12}}$ is not $p^{2e}$-split for
   some $e$ is an ascending union of closed subschemes of codimension at least
   $B$. The preimage of this locus in $\Poly^{ab\neq 0}[\bPhi]$ is exactly equal
   to the preimage of the locus in $\WeierstrassData^{-2,ab\neq 0}[\bPhi]$
   parametrizing fibrations that are not $2e$-Frobenius split for some $e$. By
   Lemma \ref{lem:level lowering}, a fibration is not $2e$-Frobenius split for
   some $e$ if and only if it is not $\infty$-Frobenius split.
\end{proof}

Given a configuration $\bPhi$ of additive fibers, the integer $B+\zeta(\bPhi)$ can be easily
computed. We will show that, in almost all cases, this codimension is at least
13 (the remaining cases will be dealt with separately below). To reduce the
necessary computations, we first observe that certain configurations can never
contain non $\infty$-Frobenius split fibrations. For example, suppose that $f$ is a fibration in $\WeierstrassData^{-2}$ realizing $\bPhi$ such that $\deg(\FT(f))\leq 12$. For every $e>0$ we have
\[
\deg\left(\frac{p^{2e}-1}{12}\FT(f)\right)\leq p^{2e}-1
\]
By Proposition \ref{prop:frob-split}, this divisor is $p^{2e}$-split. Therefore,
by Proposition \ref{prop:cohoinj}(2) $f$ is $\infty$-Frobenius split. In fact,
something slightly stronger is true.
%Suppose that $\bPhi=(\Phi_1,\ldots,\Phi_n)$, and that $v(\Phi_1)$ is maximal among all of the $v(\Phi_i)$ (but there could be multiple fibers realizing the maximum).

\begin{lem}\label{lem:a little trick}
	Let $\bPhi$ be a configuration of additive fiber types and let $f$ be a fibration in $\WeierstrassData^{-2}$ realizing $\bPhi$. If there exists an $i$ such that $\delta(\bPhi)-\delta(\Phi_i)\leq 12$, then $f$ is $\infty$-Frobenius split.
\end{lem}
\begin{proof}
	For ease of notation, suppose that $i=1$. Let $p_1\in\P^1$ be the point supporting the fiber of type $\Phi_1$. Applying an automorphism of $\P^1$, we may assume that $p_1=[1:0]$, and that the fiber over $[0:1]$ is smooth. We then have that
	the divisor $\FT(f)$ corresponds to the polynomial
	\[
	    P(s,t)=t^{\delta(\Phi_1)}\prod_{i=2}^{n}(t-z_is)^{\delta(\Phi_i)}
	\]
	where the $z_i$ are non-zero. We see that the monomial with the highest power
	of $s$ appearing in the expansion of $P(s,t)$ is a scalar times
	$s^{\delta(\bPhi)-\delta(\Phi_1)}t^{\delta(\Phi_1)}$. It follows that the monomial with the
	highest power of $s$ appearing in the expansion of
	$P(s,t)^{\frac{p^{2e}-1}{12}}$ is the $\frac{p^{2e}-1}{12}$-th power of this
	term. Hence, $P(s,t)^{\frac{p^{2e}-1}{12}}$ contains a nonzero term of the
	form $s^\gamma t^\epsilon$ where $\gamma,\epsilon\leq p^{2e}-1$. As before, it
	follows from Proposition \ref{prop:frob-split} and Proposition
	\ref{prop:cohoinj} (2) that $f$ is $\infty$-Frobenius split.
\end{proof}

We make the following definition.
\begin{conditions}\label{conditions: fibers} Let $\bPhi$ be a configuration of additive fibers. Let $\Phi$ be a fiber type which achieves the maximal value of $\delta(\Phi)$ among all fiber types appearing in $\bPhi$. The fiber configuration $\bPhi$ is
	\emph{critical\/} if it satisfies the following conditions.
	\begin{enumerate}
		\item $\alpha(\bPhi)\leq 8$
		\item $\beta(\bPhi)\leq 12$
		\item $\delta(\bPhi) - \delta(\Phi) \geq 13$
	\end{enumerate}
\end{conditions}

The set of critical $\bPhi$ can be easily computed using Table \ref{table:1}. We
have included the resulting list in Table 2. In particular, we
notice that every critical configuration has non-$\infty$-Frobenius split locus
of codimension at least $13$.

\begin{prop}\label{prop:non split estimate ab not 0}
	The locus in $\WeierstrassData^{-2,ab\neq 0}$ parametrizing fibrations that
	are not $\infty$-Frobenius split is a countable union of locally closed
	subspaces of codimension at least $13$ at every point.
\end{prop}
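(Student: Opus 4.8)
The plan is to reduce to a finite case analysis using the combinatorial estimates of Section~\ref{sec:frob-split} and the stratification of $\WeierstrassData^{-2,ab\neq 0}$ by additive fiber configuration. First I would stratify $\WeierstrassData^{-2,ab\neq 0}$ as the disjoint union of the locally closed strata $\WeierstrassData^{-2,ab\neq 0}[\bPhi]$ over all possible lists $\bPhi$ of additive fiber types; since we are working with elliptic K3 surfaces, the total additive contribution is bounded ($v(\bPhi)\leq 24$), so there are only finitely many such $\bPhi$. On each stratum, Proposition~\ref{prop:codimension bounds using table} tells us that the non-$\infty$-Frobenius split locus is a countable union of locally closed subspaces of codimension at least $B+n(\bPhi)$ at every point, where $B=B_{13}(\lambda_1,\dots,\lambda_n)$ in the notation of Notation~\ref{notation:the number B of a configuration}. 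So it suffices to check, configuration by configuration, that either the stratum $\WeierstrassData^{-2,ab\neq 0}[\bPhi]$ itself already has codimension at least $13$ in $\WeierstrassData^{-2,ab\neq 0}$ (in which case we are done trivially, since the non-split locus sits inside it), or else $B+n(\bPhi)\geq 13$.

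The next step is to dispose of the configurations for which $B=0$. By Lemma~\ref{lem:a little trick}, if $v(\bPhi)-v(\Phi_1)\leq 12$ then every fibration realizing $\bPhi$ is $\infty$-Frobenius split, so the non-split locus is empty and there is nothing to prove. Thus we may assume $v(\bPhi)-v(\Phi_1)\geq 13$. Among these, if $\alpha(\bPhi)>8$ or $\beta(\bPhi)>12$, then by Proposition~\ref{prop:fiber mass} the codimension of $\WeierstrassData^{-2,ab\neq 0}[\bPhi]$ is at least $n(\bPhi)=\sum n(\Phi_i)$, and one checks directly from Table~\ref{table:1} (using $n(\Phi)=\alpha(\Phi)+\beta(\Phi)-1$ and the relation between the $\alpha,\beta$ columns and the $v$ column) that these inequalities force $n(\bPhi)\geq 13$; hence the stratum alone already has codimension at least $13$. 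This leaves exactly the \emph{critical} configurations of Conditions~\ref{conditions: fibers}, of which there are finitely many and which are enumerated in Table~\ref{table:2}.

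Finally, for each critical $\bPhi$ I would compute $B=B_{13}(\lambda_1,\dots,\lambda_n)$ directly from its definition~\eqref{eq:the number B} (count the partial sums $\tau_i$ of the $v(\Phi_i)$, sorted in non-increasing order, lying in the interval $[\delta+1,12]$ where $\sum\lambda_i=13+\delta$) and combine it with $n(\bPhi)$, verifying in each case that $B+n(\bPhi)\geq 13$; Example~\ref{example:8II} already carries this out for the extreme case $\bPhi=8\II$, where $n(\bPhi)=8$ and $B=5$, giving exactly $13$, which is why the statement is phrased with the bound $13$. The only real obstacle is making sure the enumeration of critical configurations is complete and that the $B$-computation is done correctly for each; this is a finite, mechanical check, and the point of having arranged the definition of ``critical'' and of Table~\ref{table:1} is precisely to make it short. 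Having verified the inequality $B+n(\bPhi)\geq 13$ on every stratum that can contain a non-$\infty$-Frobenius split fibration, and noting that a countable union of the stated locally closed subspaces still has the property that each has codimension at least $13$ at every point, we conclude that the non-$\infty$-Frobenius split locus in $\WeierstrassData^{-2,ab\neq 0}$ has the asserted form.
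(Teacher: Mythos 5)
Your overall route is the same as the paper's: stratify $\WeierstrassData^{-2,ab\neq 0}$ by the additive fiber configuration $\bPhi$, discard configurations with $v(\bPhi)-v(\Phi_1)\leq 12$ using Lemma \ref{lem:a little trick}, and for the remaining (critical) configurations invoke Proposition \ref{prop:codimension bounds using table} together with the check, recorded in Table \ref{table:2}, that $B+n(\bPhi)\geq 13$. That part is fine.

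There is, however, one step that fails as stated: your claim that $\alpha(\bPhi)>8$ or $\beta(\bPhi)>12$ forces $n(\bPhi)\geq 13$, so that Proposition \ref{prop:fiber mass} alone kills those strata. For $\beta(\bPhi)>12$ this is true (every additive type has $\alpha\geq 1$, so $n(\Phi)=\alpha+\beta-1\geq\beta$ and hence $n(\bPhi)\geq\beta(\bPhi)\geq 13$), but for $\alpha(\bPhi)>8$ it is false: the configuration $9\,\II$ has $\alpha(\bPhi)=9$, $v(\bPhi)-v(\Phi_1)=16\geq 13$, and $n(\bPhi)=9$; similarly $5\,\II+2\,\IV$ has $\alpha(\bPhi)=9$ and $n(\bPhi)=11$. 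The correct way to dismiss these configurations is much simpler: on $\WeierstrassData^{-2,ab\neq 0}$ the sections $a$ and $b$ are nonzero elements of $\Gamma(\P^1,\ms O(8))$ and $\Gamma(\P^1,\ms O(12))$, so their total orders of vanishing are bounded by $8$ and $12$; conditions (1) and (2) in Conditions \ref{conditions: fibers} are exactly these realizability constraints, so any configuration violating them has empty stratum $\WeierstrassData^{-2,ab\neq 0}[\bPhi]$ and there is nothing to prove. With that substitution (emptiness rather than a codimension bound), your argument matches the paper's proof.
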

\begin{proof}
	We will show that for each configuration $\bPhi$ of additive fiber types the
	non-$\infty$-Frobenius split locus in $\WeierstrassData^{-2,ab\neq 0}[\bPhi]$
	is a countable union of locally closed subspaces of codimension at least $13$
	at every point. This will prove the result.
	
	By Lemma \ref{lem:a little trick}, it suffices to consider configurations
	$\bPhi$ that are critical. Using Table \ref{table:2}, we see that for each
	such configuration, the codimension bound $B+\zeta(\bPhi)$ produced by Proposition
	\ref{prop:codimension bounds using table} is at least $13$. (To simplify this
	computation slightly, one could observe that, if $\zeta(\bPhi)\geq 13$, then by
	Proposition \ref{prop:fiber mass} the codimension of the entire locus of
	fibrations in $\WeierstrassData^{-2,ab\neq 0}$ realizing $\bPhi$ is already at
	least $13$. This leaves only 27 configurations in Table \ref{table:2} for
	which we must compute the integer $B$.)
\end{proof}

\begin{prop}\label{prop:a=0 is small}
	The supersingular locus in $\WeierstrassData^{-2,a=0}$ obeys the following
	dimensional constraints.
	\begin{enumerate}
		\item If $p\equiv 1\pmod{3}$, then the supersingular locus in
		$\WeierstrassData^{-2,a=0}$ is empty.
		\item If $p\equiv 2\pmod{3}$, then the supersingular locus in
		$\WeierstrassData^{-2,a=0}$ is a closed subspace whose codimension in
		$\WeierstrassData^{-2}$ is at least $14$.
	\end{enumerate}
\end{prop}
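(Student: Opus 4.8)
\emph{Setup.} A datum in $\WeierstrassData^{-2,a=0}$ is the minimal Weierstrass model $y^2=x^3+b(t)$ of an isotrivial elliptic K3 surface $f\colon X\to\P^1$ with $j\equiv 0$, where $b\in\Gamma(\P^1,\ms O(12))$ has every zero of order $\le 5$; thus $\FT(f)=2\,\mathrm{div}(b)$ has degree $24$ and the additive fibers have types among $\II,\IV,\I_0^\ast,\IV^\ast,\II^\ast$, so the number $N$ of additive fibers satisfies $N\le 12$. The plan rests on the observation that $X$ is dominated by a degree-$6$ (for special $b$, a smaller) generically finite map from $C\times E_0$, where $E_0\colon y^2=x^3+1$ and $C\to\P^1$ is the $\m_6$-cyclic cover $w^6=b(t)$: over $k(C)$ the substitution $(x,y)=(w^2X,w^3Y)$ trivializes the sextic twist. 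Consequently $\H^2(X)$ embeds, compatibly with Frobenius and the Hodge filtration, into $\H^2(C\times E_0)=\H^2(C)\oplus\bigl(\H^1(C)\otimes\H^1(E_0)\bigr)\oplus\H^2(E_0)$, with the transcendental-type part of $\H^2(X)$ — and in particular the line $\H^{2,0}(X)$ — landing in the $\m_6$-equivariant part of $\H^1(C)\otimes\H^1(E_0)$, hence governed by the primitive eigenspaces of $\H^1(C)$ under $\m_6$.

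\emph{Part (1).} When $p\equiv 1\pmod 3$ the curve $E_0$ is ordinary, so $\H^1(E_0)$ has a slope-$0$ line and a slope-$1$ line, the latter equal to $\H^{1,0}(E_0)$. Tracing the character bookkeeping, supersingularity of $X$ (all crystalline slopes of $\H^2(X)$ equal to $1$) would force the eigenspace $\H^1(C)_\chi$ that carries $\H^{2,0}(X)$ to be entirely of slope $0$. But that eigenspace has $h^{1,0}\ge h^{2,0}(X)=1$, so its $F$-isocrystal total slope is $\ge 1$, a contradiction. Hence the supersingular locus in $\WeierstrassData^{-2,a=0}$ is empty. (Alternatively: by Shioda--Tate the Mordell--Weil lattice of $X$ carries a $\Z[\zeta_3]$-structure, and since $p$ is split in $\Z[\zeta_3]$ one deduces $p\nmid\operatorname{disc}\NS(X)$, again incompatible with supersingularity.)

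\emph{Part (2).} Now $p\equiv 2\pmod 3$, so $E_0$ and hence the generic fiber of every such $f$ is supersingular. First, the supersingular locus is closed: supersingularity of a K3 is equivalent to its formal Brauer group having infinite height, a closed condition in an algebraic family, and the K3 attached to a Weierstrass datum varies algebraically via minimal resolution; since $\WeierstrassData^{-2,a=0}$ is closed in $\WeierstrassData^{-2}$, we obtain a closed subspace of $\WeierstrassData^{-2}$. For the codimension bound I would reduce, using Corollary \ref{cor:dim cram}, to proving that every supersingular isotrivial $j\equiv 0$ elliptic K3 surface has Artin invariant $\sigma_0\le 5$: then the supersingular locus in $\WeierstrassData^{-2,a=0}$ lies in the Artin-invariant-$\le 5$ locus, which by Corollary \ref{cor:dim cram} has dimension $\le 7$ inside the $21$-dimensional space $\WeierstrassData^{-2}$, hence codimension $\ge 14$.

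\emph{The main obstacle: $\sigma_0\le 5$.} This is where the real work lies. For $X$ supersingular the $\m_6$-primitive part $A$ of $\Jac(C)$ must be supersingular, hence isogenous to a power of $E_0$, and one checks that $\NS(X)$ is generated by the trivial lattice of the fibration together with the image of $\Hom(A,E_0)$; the latter is a $\Z[\zeta_3]$-lattice whose underlying $\Z$-rank is the Mordell--Weil rank $2N-4\le 20$. Since $p$ is inert in $\Z[\zeta_3]$, I expect the $\Z[\zeta_3]$-structure to cut the $p$-part of $\operatorname{disc}\NS(X)$, i.e. the quantity $2\sigma_0$, down by a factor of roughly two, yielding $\sigma_0\le 5$; the tight case is the configuration $12\,\II$ (realized by $y^2=x^3+t^{11}+1$), which by Shioda's computation indeed has $\sigma_0=5$ for suitable $p$. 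The careful verification — precisely identifying $\NS(X)$ and computing the $p$-adic discriminant of a $\Z[\zeta_3]$-lattice with $p$ inert — is the technical heart; a cleaner but heavier alternative is to package it as the statement that the supersingular stratum of the unitary Shimura datum attached to $\Jac(C)$ (of signature $(1,9)$ for the generic configuration, via the Chevalley--Weil formula) has codimension $5$, so that its preimage in $\WeierstrassData^{-2,a=0}$ has codimension $\ge 5$ and hence codimension $\ge 14$ in $\WeierstrassData^{-2}$.
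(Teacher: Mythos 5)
Your strategy diverges from the paper's: the paper observes that every surface $y^2=x^3+b(t)$ carries a $\m_3$-action acting nontrivially on $\H^0(X,\Omega^2_X)$, so its non-symplectic index is divisible by $3$, and then quotes Jang's classification: for $p\equiv 1\pmod 3$ no supersingular K3 admits a non-symplectic order-$3$ automorphism (giving (1)), and for $p\equiv 2\pmod 3$ the locus of such surfaces in the Ogus space has dimension at most $4$, which combined with Lemma \ref{lem:weierstrassify} (fibration structures add $3$ dimensions) gives dimension at most $7$ in the $21$-dimensional stack, i.e.\ codimension $\geq 14$. Your covering by $C\times E_0$ via the sextic twist is a reasonable alternative route, and your part (1) is plausible in outline, though the slope bookkeeping is not carried out (the claim that pure slope $1$ on the relevant piece of $\H^2(X)$ forces an entire eigenspace of $\H^1(C)$ to have slope $0$ needs an actual argument, and the parenthetical ``$p\nmid\operatorname{disc}\NS(X)$'' deduction is not justified as stated).

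The genuine gap is in part (2). Your entire codimension bound rests on the assertion that every supersingular isotrivial $j\equiv 0$ elliptic K3 has $\sigma_0\leq 5$, and you explicitly do not prove it (``I expect\ldots''). Worse, it is likely false: by Jang's results, quoted in the Remark following the Proposition, for $p\equiv 2\pmod 3$ the supersingular K3 surfaces admitting a non-symplectic order-$3$ automorphism are exactly those of odd Artin invariant, so $\sigma_0=7,9$ can occur in this class; what is true (and what the paper uses) is a bound on the \emph{dimension} of the locus of such surfaces (at most $n$ inside the Artin invariant $2n+1$ stratum, hence at most $4$ overall), not a bound on the Artin invariant itself. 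In addition, your reduction misreads Corollary \ref{cor:dim cram}: as stated it says each constructible piece \emph{contains} a locally closed subspace of dimension $s+2$ (a lower bound), and even granting the upper bound implicit in its proof, you would only place the supersingular locus inside a \emph{countable union} of constructible sets of dimension $\leq 7$; since $k$ may be countable (e.g.\ $\overline{\F}_p$), this does not bound the dimension of the closed supersingular locus, whereas the statement you must prove asserts a closed subspace of codimension at least $14$. So part (2) needs either Jang's dimension bound (the paper's route) or a genuine proof of a moduli-dimension bound for your $\Z[\zeta_3]$-lattice argument; as written it is incomplete.
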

\begin{proof}
  We recall that, given any K3 surface $X$, the image of the map
	$\Aut(X)\to\GL(\H^0(X,\Omega^2_X))$ is a finite cyclic group, whose order is
	referred to as the \textit{non-symplectic index} of $X$. Consider a
	Weierstrass equation $y^2=x^3+b(t)$ parametrized by a point in
	$\WeierstrassData^{-2,a=0}$. The corresponding projective surface admits an
	action of $\mu_3$ where $\omega$ acts by $x\mapsto \omega x$ and $y\mapsto y$.
	The action extends to an action of $\mu_3$ on the associated K3 surface $X$.
	If $E_t\subset X$ is a smooth fiber of the elliptic fibration on $X$, then
	restriction induces an isomorphism
	$\H^0(X,\omega_X)\xrightarrow{\sim}\H^0(E,\omega_E)$, and the latter
	cohomology group is generated by the invariant differential $dx/2y$ (in
	Weierstrass coordinates). Thus, $\mu_3$ acts nontrivially on
	$\H^0(X,\Omega^2_X)$, and therefore the non-symplectic index of $X$ is
	divisible by $3$.
	
	The possible non-symplectic indices of supersingular K3 surfaces have been
	investigated by Jang in \cite{jang2019}. Table 1 of \cite{jang2019} shows that
	if $p\equiv 1\pmod{3}$, then no supersingular K3 surface admits a
	non-symplectic automorphism of order $3$, and if $p\equiv 2\pmod 3$, then the
	locus in $S_{10}$ consisting of supersingular K3
	surfaces with a non-symplectic automorphism of order 3 has dimension at most
	$4$. Here, $S_{10}$ is Ogus's moduli space parametrizing supersingular K3 surfaces $X$ together with a marking of $\Pic(X)$ by the supersingular K3 lattice of Artin invariant 10. The result now follows from Lemma \ref{lem:weierstrassify}.
\end{proof}

\begin{remark}
	In fact, Jang's results show more: the locus of points in $S_{10}$ with a
	non-symplectic automorphism of order $3$ cannot contain any point of even
	Artin invariant, and for each $0\leq n\leq 4$, has intersection with the Artin
	invariant $2n+1$ locus of dimension at most $n$. 
\end{remark}

\begin{prop}\label{prop:non-pi-inj-small-total}
	The locus $N\subset\WeierstrassData^{-2}$ parametrizing fibrations
	$(f:X\to\P^1,\sigma)$ such that $X$ is supersingular and $f$ is not
	$\infty$-Frobenius-split is a countable union of locally closed subspaces of
	codimension at least $13$ at every point.
\end{prop}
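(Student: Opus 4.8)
The plan is to bound $N$ separately on each of the three strata of the decomposition
$$\WeierstrassData^{-2}=\WeierstrassData^{-2,ab\neq 0}\sqcup\WeierstrassData^{-2,a=0}\sqcup\WeierstrassData^{-2,b=0}$$
introduced in Section \ref{sec:univ poly}. The stratum $\WeierstrassData^{-2,ab\neq 0}$ is open in $\WeierstrassData^{-2}$ and the other two are closed, hence all three are locally closed; since both local closedness and codimension are local, it suffices to exhibit, for each $\star\in\{ab\neq 0,\ a=0,\ b=0\}$, the intersection $N\cap\WeierstrassData^{-2,\star}$ as a countable union of locally closed subspaces of codimension at least $13$ at every point, computing codimension inside $\WeierstrassData^{-2,\star}$ in the first case and inside $\WeierstrassData^{-2}$ in the other two.

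The $ab\neq 0$ stratum is immediate: $N\cap\WeierstrassData^{-2,ab\neq 0}$ is contained in the locus of fibrations in $\WeierstrassData^{-2,ab\neq 0}$ that are not $\infty$-Frobenius split, and Proposition \ref{prop:non split estimate ab not 0} already presents that larger locus as a countable union of locally closed subspaces of codimension at least $13$ at every point. Intersecting with the (constructible) supersingular locus only refines the presentation and cannot decrease codimension, so $N\cap\WeierstrassData^{-2,ab\neq 0}$ has the required form.

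For the isotrivial strata I would argue as follows. On $\WeierstrassData^{-2,a=0}$ (the $j$-invariant $0$ case), $N\cap\WeierstrassData^{-2,a=0}$ is contained in the supersingular locus of $\WeierstrassData^{-2,a=0}$, which by Proposition \ref{prop:a=0 is small} is either empty (when $p\equiv 1\pmod 3$) or a single closed subspace of codimension at least $14$ in $\WeierstrassData^{-2}$ (when $p\equiv 2\pmod 3$); in either case this contains $N\cap\WeierstrassData^{-2,a=0}$ and has codimension $\geq 13$. On $\WeierstrassData^{-2,b=0}$ (the $j$-invariant $1728$ case) the point is a dimension count, made exactly as in Corollary \ref{lem:wd-dim}: this stratum parametrizes minimal Weierstrass data $(\ms O_{\P^1}(-2),a,0)$ with $a\in\Gamma(\P^1,\ms O(8))$, so it is the quotient of a nonempty open subset of the $9$-dimensional space of such sections by the scaling $\G_m$-action, whose orbits are one-dimensional; hence it is irreducible of dimension $8$, of codimension $21-8=13$ at every point in the $21$-dimensional stack $\WeierstrassData^{-2}$. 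Thus $N\cap\WeierstrassData^{-2,b=0}\subseteq\WeierstrassData^{-2,b=0}$ already has codimension $\geq 13$ everywhere, with nothing further to verify.

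Assembling the three pieces presents $N$ as a countable union of locally closed subspaces of codimension at least $13$ at every point, which is the claim. The genuine content is entirely in the inputs — Proposition \ref{prop:non split estimate ab not 0}, which rests on the $m$-split divisor estimates of Section \ref{sec:frob-split} together with the critical-configuration bookkeeping recorded in Table \ref{table:2}, and Proposition \ref{prop:a=0 is small}, which imports Jang's classification of non-symplectic indices. The present proposition is just the assembly, and the only point that requires care is the stratum $\WeierstrassData^{-2,b=0}$, which does not receive its own proposition: one must check directly that it is small, and the dimension count above does this, pinning it at codimension exactly $13$ and thereby making the uniform bound $13$ come out across all three cases.
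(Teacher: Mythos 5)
Your proof is correct and follows essentially the same route as the paper: decompose along the strata $ab\neq 0$, $a=0$, $b=0$, quote Proposition \ref{prop:non split estimate ab not 0} and Proposition \ref{prop:a=0 is small} for the first two, and dispose of the $b=0$ stratum by noting it already has codimension $13$. Your explicit dimension count for $\WeierstrassData^{-2,b=0}$ (dimension $8$ inside the $21$-dimensional stack) just spells out the codimension-$13$ assertion the paper states tersely via $\Poly^{b=0}$.
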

\begin{proof}
	It suffices to show the result separately for the corresponding loci in the
	space of Weierstrass data with $ab\neq 0$, $a=0$, and $b=0$.
	
	The first case follows from Proposition \ref{prop:non split estimate ab not
	0}, and the second follows from Proposition \ref{prop:a=0 is small}. Finally,
	consider $\Poly^{b=0}$. This subspace itself has codimension $13$, so $N\cap
	\WeierstrassData^{-2,b=0}$ has codimension at least $13$, as well.
\end{proof}

\begin{remark}
	In the case $ab\neq 0$, it seems quite difficult to obtain much information on
	the supersingular locus, at least in this generality. Fortunately, Proposition
	\ref{prop:non split estimate ab not 0} shows that the Frobenius nonsplit locus
	is already too small to contain general supersingular K3 surfaces. In the case
	$a=0$, the reverse is true: the condition of being not $\infty$-Frobenius
	split imposes only one relation, and hence the nonsplit locus could a priori
	contain a general supersingular K3 surface. However, Proposition \ref{prop:a=0
	is small} shows that in fact the supersingular locus is quite small.
\end{remark}

\section{Very general elliptic supersingular K3 surfaces}\label{sec:very general}

In this section we show that for very general supersingular K3 surfaces there
are no non-Jacobian elliptic structures that admit purely inseparable
multisections. We proceed by first studying Jacobian fibrations and then using
supersingular twistor lines to deduce consequences for non-Jacobian fibrations.

\begin{thm}\label{thm:hulk smush}
	Every Jacobian elliptic structure on a very general supersingular K3 surface
	of Artin invariant at least 7 is $\infty$-Frobenius-split.
\end{thm}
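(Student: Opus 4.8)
The plan is to reduce the statement to the dimension counts already assembled. The key point is that the locus of Jacobian elliptic fibrations on supersingular K3 surfaces of a fixed Artin invariant traces out a large enough family inside the moduli of Weierstrass data, so that it cannot be contained in the (relatively small) non-$\infty$-Frobenius-split locus computed in Section \ref{sec:non-pi-inj}.

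First I would recall from Corollary \ref{cor:dim cram} that the locus of Artin invariant $s$ surfaces in $\WeierstrassData^{-2}$ is a countable union of constructible subsets, each containing a locally closed subspace of dimension $s+2$, i.e.\ codimension $19-s$. For $s=7$ this codimension is $12$; for $s\geq 7$ it is at most $12$. On the other hand, by Proposition \ref{prop:non-pi-inj-small-total}, the locus $N\subset\WeierstrassData^{-2}$ parametrizing pairs $(f:X\to\P^1,\sigma)$ with $X$ supersingular and $f$ not $\infty$-Frobenius-split is a countable union of locally closed subspaces each of codimension at least $13$ at every point. Since $13 > 12$, no locally closed subspace of codimension $\le 12$ can be contained in a countable union of locally closed subspaces each of codimension $\ge 13$ (a standard Baire-type / irreducibility argument: an irreducible locally closed subspace contained in a countable union of closed-in-it subsets must be contained in one of them, forcing the codimension inequality). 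Hence the Artin invariant $\geq 7$ locus is not contained in $N$, so a very general point of that locus (more precisely, the complement of the countable union $N$ within each constructible piece of the Artin invariant $s$ locus) corresponds to a Jacobian fibration that is $\infty$-Frobenius-split.

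The remaining work is to promote this statement about a very general \emph{Weierstrass datum} (equivalently, about a very general \emph{marked} pair $((X,\tau),a,b,f,\sigma)$ via Paragraph \ref{para:ogusy} and Corollary \ref{cor:artin invariant weierstrassify}) to a statement about a very general \emph{supersingular K3 surface} and \emph{every} Jacobian structure on it. For this I would use that $V_i\to S_n$ is \'etale with $V_i$ of finite type, that the $V_i$ cover $N\to S_n$, and that the quasi-finite map $w:V_i\to\WeierstrassData^{-2}$ has countable fibers: pulling back the bad locus $N$ along $w$ gives a countable union of locally closed subsets of $V_i$ of positive codimension (in particular not dominating any component of $V_i$), hence its image in $S_n$ is contained in a countable union of proper closed subsets. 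Taking the union over the countably many $V_i$ and over the countably many possible classes $(a,b)$ (these form a countable set since $\Aut(\Lambda_n)$ is countable) still gives a countable union of proper closed subsets of $S_n$; the complement is the very general locus, and by construction every Jacobian elliptic structure on such a surface is $\infty$-Frobenius-split.

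The main obstacle is the bookkeeping in this last paragraph: making sure that the passage from Weierstrass data back to the Ogus moduli space does not lose the codimension estimate, and that "every Jacobian structure" is genuinely covered. Concretely, one must check that the countably many strata $V_i$, together with the countably many choices of $(a,b)\in\Lambda_n\times\Lambda_n$ realizing a Jacobian elliptic fibration, really exhaust all Jacobian elliptic structures on all surfaces in $S_n$ (this is exactly the last sentence of Paragraph \ref{para:ogusy}), and that quasi-finiteness of $w$ (Corollary \ref{cor:artin invariant weierstrassify}) suffices to transport "contained in a countable union of codimension $\ge 13$ subspaces, while the ambient stratum has codimension $\le 12$" from $\WeierstrassData^{-2}$ down to $S_n$. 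Once these are in place, the numerical inequality $13>12$ does all the real work, and the theorem follows for Artin invariant exactly $7$; the cases of higher Artin invariant follow because their loci have even larger codimension in $\WeierstrassData^{-2}$, hence are a fortiori not contained in $N$.
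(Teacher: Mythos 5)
Your proposal is correct and follows essentially the same route as the paper: combine the codimension-$\geq 13$ bound of Proposition \ref{prop:non-pi-inj-small-total} with the codimension-$(19-s)\leq 12$ lower bound on the Artin invariant $s\geq 7$ locus from Corollary \ref{cor:dim cram}, and then use the countability of the markings/polarization data (Paragraph \ref{para:ogusy}, Corollary \ref{cor:artin invariant weierstrassify}) to pass from very general Weierstrass data back to a very general supersingular K3 surface and all of its Jacobian elliptic structures. Your write-up simply makes explicit the transfer step that the paper compresses into the sentence about the hyperbolic-plane polarization being countable.
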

\begin{proof}
	With the notation introduced in the paragraph preceding Corollary \ref{cor:artin invariant weierstrassify}, we consider the countable collection of quasi-finite morphisms $w:M_i'\to\WeierstrassData^{-2}$. Each $M_i'$ is a $\PGL_2$-torsor over $V_i$, which is an open subset of the space of quadruples $((X,\tau),a,b)$ where $X$ is a supersingular K3 surface, $\tau$ is a lattice polarization of $X$ by a fixed supersingular K3 lattice, and $a,b$ are the divisor classes of a fiber or section respectively of a Jacobian elliptic fibration on $X$.
	
	By Corollary \ref{cor:dim cram}, the Artin invariant $s$ locus in $\WeierstrassData^{-2}$ has dimension at least $s+2$ at every point. By Proposition \ref{prop:non-pi-inj-small-total}, a very general point of the Artin invariant $s$ locus in $\WeierstrassData^{-2}$ is
	$\infty$-Frobenius-split for any $s$ at least $7$. The same follows for each $V_i$. As there are only countably many $V_i$, the very general point of Artin invariant at least $7$ in the Ogus moduli space is not in the image of the non $\infty$-Frobenius-split locus in any of the $V_i$. In other words, every Jacobian elliptic structure on a very general supersingular K3 surface of Artin invariant at least $7$ is $\infty$-Frobenius-split.
% 	There are countably many $V_i$, and 
% 	of Artin invariant $7$ surfaces with hyperbolic-plane polarized lattices
% 	parametrizes elliptic structures that are $\infty$-Frobenius-split. Since there
% 	are countably many hyperbolic-plane polarizations, we see that for a very general K3
% 	surface of Artin invariant $s$ for $s\geq 7$, every Jacobian elliptic
% 	structure is $\infty$-Frobenius-split. This concludes the proof.
\end{proof}

In order to explain consequences of Theorem \ref{thm:hulk smush}, we briefly
discuss twistor lines through generic surfaces. Given a positive integer $i$
between $1$ and $10$, let $\Lambda_i$ denote the supersingular K3 lattice of
Artin invariant $i$. We will write $M_i$ for the Ogus moduli space of
characteristic subspaces which parametrizes supersingular K3 crystals $H$
together with an isometric embedding $\Lambda_i\to H$. As Ogus describes, $M_i$
is smooth over $\F_p$ and irreducible, with $\Gamma(M_i,\ms O)=\F_{p^2}$. Let
$S_i$ denote the algebraic space parametrizing lattice-polarized supersingular
K3 surfaces $(X,\Lambda_i\inj\NS(X))$. Furthermore, there is
an étale morphism $S_i\to M_i$ and a covering family of Zariski opens $U\subset
M_i$ that admit sections $U\to S_i$ over $M_i$ (see the proof of \cite[Proposition 1.16]{Ogus83}). Write $\kappa_i$ for the
function field of $M_i$.

\begin{notation}\label{notn:burf}
	Suppose $\eta\to M_i$ is a geometric generic point, and choose a marked
	supersingular K3 surface $(X,\tau:\Lambda_i\simto\NS(X))$ realizing $\eta$.
\end{notation}

\begin{lem}\label{lem:generic is generic}
	Suppose given any class $a\in\Lambda_i$ that represents an elliptic fibration
	$X\to\P^1$. For any marking $\tau':\Lambda_{i-1}\simto\NS(J)$ on the Jacobian
	surface $J\to\P^1$ the resulting map $\eta\to M_{i-1}$ is a geometric generic
	point. 
\end{lem}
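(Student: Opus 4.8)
The plan is to deduce this from the fact that the Jacobian construction, together with a choice of marking, traces out a dominant (indeed étale-after-base-change) map between the relevant Ogus moduli spaces, so that a geometric generic point maps to a geometric generic point. The key input is that the Artin invariant of the Jacobian $J$ of a non-Jacobian elliptic fibration on a supersingular K3 $X$ of Artin invariant $i$ drops by exactly one, i.e.\ $\sigma_0(J)=i-1$; this is standard in the theory (it is implicit in the Artin--Tate/twistor-line picture of \cite{1804.07282}, and it is precisely why the twistor families through an Artin invariant $i$ surface sweep out an $(i-1)$-parameter family whose Jacobians have Artin invariant $i-1$).

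First I would recall the setup from Paragraph \ref{para:ogusy} and the surrounding discussion: over (a Zariski-open cover of) $M_i$ we have the space $N\to S_i$ of triples $((X,\tau),a,b)$, étale over $S_i$, and for a class $a\in\Lambda_i$ representing an elliptic fibration we get the associated Jacobian fibration $J\to\P^1$. The construction $X\rightsquigarrow J$ is algebraic in families (it is the Artin--Tate construction of Section \ref{sec:main event}), so choosing $a$ and a compatible marking $\tau'$ gives a morphism from an open subset of $M_i$ to $M_{i-1}$; concretely, one uses that $\NS(J)$ contains the hyperbolic plane spanned by the fiber class and the zero section with orthogonal complement of discriminant group of $p$-rank $2(i-1)$, so $\sigma_0(J)=i-1$, and a marking $\tau'$ exists. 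The resulting map $M_i \dashrightarrow M_{i-1}$ has image of dimension $i-1=\dim M_{i-1}$ (since the twistor-line fibers of $X\mapsto J$ are one-dimensional and $\dim M_i=i$), hence is dominant, and therefore sends a geometric generic point to a geometric generic point. Since a geometric generic point of $M_i$ over $\eta$ pushes forward to a dominant, hence geometric-generic, point of $M_{i-1}$, the claim follows.

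**The main obstacle** I expect is making precise that the induced rational map $M_i\dashrightarrow M_{i-1}$ is genuinely dominant, rather than just landing in some positive-codimension locus. The cleanest route is dimension-counting combined with irreducibility of $M_{i-1}$ (Ogus): the total space of pairs (surface of Artin invariant $i$, twistor line through it producing a fixed Jacobian of Artin invariant $i-1$) fibers over $M_{i-1}$ with one-dimensional fibers (the supersingular twistor line) and has dimension $i = \dim M_{i-1}+1$, so the map to $M_{i-1}$ is dominant; equivalently, \emph{every} Jacobian supersingular K3 of Artin invariant $i-1$ with an elliptic structure arises as the Jacobian of a member of some twistor family through an Artin invariant $i$ surface, which is exactly the content of the twistor-line theory developed in \cite{1804.07282}. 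Once dominance is in hand, the statement ``geometric generic point $\mapsto$ geometric generic point'' is formal: a dominant morphism of irreducible schemes sends the generic point to the generic point, and passing to geometric points (over algebraically closed fields) preserves this because $\eta\to M_i$ geometric generic means it factors through $\Spec$ of an algebraic closure of $\kappa_i$, whose image in $M_{i-1}$ is then $\Spec$ of an algebraically closed extension of $\kappa_{i-1}$, i.e.\ a geometric generic point. I would therefore structure the proof as: (1) produce the algebraic family and the map to $M_{i-1}$; (2) show it is dominant by the dimension/irreducibility argument above, citing the twistor-line results; (3) conclude formally.
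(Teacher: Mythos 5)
Your overall strategy---run the dimension count through a globally defined ``Jacobian map'' and conclude from dominance that a geometric generic point maps to a geometric generic point---is a legitimate reorganization of the paper's argument. The paper instead fixes the given fibration, lets $\kappa$ be the residue field of the image of $\eta$ in $M_{i-1}$, descends $(J,\tau')$ to a finite extension $\kappa'$, and observes that $X$ itself occurs as a fiber of the one-dimensional Artin--Tate family $\ms U_J$ over $\kappa'$; since the resulting map from a curve over $\kappa'$ to $M_i$ hits the generic point and $\dim M_i=\dim M_{i-1}+1$, the field $\kappa$ must have transcendence degree at least $i-1$ over $\F_p$, so the image point is generic. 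Your dominance argument is the same count run in the opposite direction, and it does go through \emph{provided} the crucial fiber bound is actually in place.

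That fiber bound is where your write-up has a genuine gap. What you need is: for a fixed $(J,\tau')$, the supersingular surfaces with an elliptic structure whose Jacobian is $J$ form at most a one-dimensional family---via the Artin--Tate isomorphism this is the statement that the index-$p$ torsors under $J\to\P^1$ are parametrized, up to the discrete ambiguity coming from $\NS(J)\otimes\Z/p$, by the one-dimensional Artin--Tate/twistor line, i.e.\ by the (one-dimensional) connected part of the relevant flat cohomology group of \cite{1804.07282}; this is exactly the input the paper uses when it asserts that $X$ appears as a fiber of $\ms U_J$. You assert the bound (``the twistor-line fibers of $X\mapsto J$ are one-dimensional''), but the justification you offer in your second paragraph argues something different and weaker: the total space of twistor families over $M_{i-1}$ tautologically dominates $M_{i-1}$, and your ``equivalent'' statement that every Jacobian of Artin invariant $i-1$ arises from some twistor family through an Artin-invariant-$i$ surface would only show that \emph{some} elliptic structure on some generic surface has generic Jacobian; it does not control the particular class $a$, over which the lemma quantifies universally. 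Without the honest fiber bound (or the paper's device of placing the given $X$ directly on the Artin--Tate family over the field of definition of $J$), dominance of your Jacobian map on the component through $(X,\tau,a)$ is not established. A minor further point: the assertion $\sigma_0(J)=i-1$ is not an input you need to supply---it is built into the hypothesis that a marking $\Lambda_{i-1}\simto\NS(J)$ exists.
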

\begin{remark}
Note that a marking $\tau'$ as in Lemma \ref{lem:generic is generic} can only exist in the case where the fibration is not itself Jacobian, since $X$ must have Artin invariant $i$.
\end{remark}
\begin{proof}[Proof of \ref{lem:generic is generic}]
	Suppose the image of $\eta\to M_{i-1}$ has residue field $\kappa$. We can
	descend the pair $(J,\tau')$ to a pair defined over a finite extension
	$\kappa'\supset\kappa$.

	The Artin--Tate construction (see Section 4.3 of \cite{1804.07282}) produces a
	family
	$$\mf X\to\P^1_{\ms U_J}$$ where $\ms U_J$ is a scheme over $\kappa'$ that is
	a flat form of $\G_a$. There are two points $x_1,x_2:\eta\to\ms U_J$ and
	diagrams
	$$
	\begin{tikzcd}
		\mf X_{x_1}\ar[r]\ar[d] & J\ar[d] \\
		\P^1_\eta\ar[r] & \P^1_\eta
	\end{tikzcd}
	$$
	and
	$$
	\begin{tikzcd}
		\mf X_{x_2}\ar[r]\ar[d] & X\ar[d] \\
		\P^1_\eta\ar[r] & \P^1_\eta
	\end{tikzcd}
	$$
	in which all horizontal arrows are isomorphisms. (That is, the chosen
	fibration structures may only be defined over $\eta$, even though the
	cohomology classes are defined over $\kappa$.) There is an open subset
	$U\subset\ms U_J$ containing $x_2$ such that $\mf X_U\to U$ is a family of
	supersingular K3 surfaces of Artin invariant $i$. There is a global marking
	$m:\Lambda_i\simto\Pic_{\mf X_U/U}$. (We must therefore have that $m|_{x_2}$
	differs from $\tau$ by an isomorphism of $\Lambda_i$.) This gives a map $U\to
	M_i$. Since $x_2$ factors through $U$, we have that $U$ must hit the generic
	point of $M_i$. Since $\dim M_i=\dim M_{i-1}+1=\dim M_{i-1}+\dim U$, we have
	that $\kappa$ must have transcendence degree $i-1$ over $\F_p$, and thus
	$\eta\to\M_{i-1}$ hits the generic point.
\end{proof}

\begin{cor}\label{cor:artin tate all the things}
	Let $(X,\tau)$ be as in Notation \ref{notn:burf} and let
	$(Y,\tau':\Lambda_{i-1}\simto\NS(Y))$ be a marked supersingular K3 surface of
	Artin invariant $i-1$ over $\eta$ such that $\eta\to M_{i-1}$ is generic. If
	every Jacobian elliptic fibration $Y\to\P^1$ is $\infty$-Frobenius-split, then
	every elliptic fibration $X\to\P^1$ admitting a purely inseparable multsection
	is Jacobian.
\end{cor}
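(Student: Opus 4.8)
The plan is to argue by contradiction, contradicting Corollary~\ref{cor:twistor}. If $f$ is Jacobian there is nothing to prove, so suppose $f\colon X\to\P^1$ is a non-Jacobian elliptic fibration admitting a purely inseparable multisection. First I would set up the Jacobian picture: let $g\colon J\to\P^1$ be the Jacobian fibration associated to $f$; by the Artin--Tate isomorphism~\eqref{eq:Artin-Tate1} and Lemma~\ref{lem:Brauer classes and sections}, $J$ is a supersingular K3 surface and $X$ is the minimal elliptic surface attached to a nonzero class $\alpha_0\in\Br(J)$, and $J$ has Artin invariant $i-1$ (as in the setting of Lemma~\ref{lem:generic is generic}, cf.\ \cite{1804.07282}). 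Fixing a marking $\tau'\colon\Lambda_{i-1}\simto\NS(J)$ and running the Artin--Tate construction of \cite[\S 4.3]{1804.07282} exactly as in the proof of Lemma~\ref{lem:generic is generic}, I obtain a twistor family $\mf X\to\P^1_{\ms U_J}$ over a form $\ms U_J$ of $\G_a$, whose fiber over the distinguished point is $J$ and which realizes $X$ as $\mf X_{x_2}$ for a point $x_2\colon\eta\to\ms U_J$, the identifications being over $\P^1_\eta$ (hence compatible with the elliptic structures). The dimension count carried out in that proof shows simultaneously that the induced point $\eta\to M_{i-1}$ is a geometric generic point \emph{and} that the classifying map from a neighborhood of $x_2$ to $M_i$ is generically finite and dominant; consequently $x_2$ is the generic point of $\ms U_J$, so that $X$, with its fibration $f$, is identified with the geometric generic fiber of the twistor family over $\ms U_J$.

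Then I would promote the Frobenius-splitting hypothesis from $Y$ to $J$. Since $(J,\tau')$ realizes a geometric generic point of the irreducible space $M_{i-1}$, just as $(Y,\tau')$ does, the crystalline Torelli theorem (cf.\ \cite{Ogus83}) identifies $J$ with $Y$ up to isomorphism and twist; in particular $J$ and $Y$ have the same geometric Jacobian elliptic fibrations, and being $\infty$-Frobenius split --- which is the splitness of the map of coherent sheaves~\eqref{eq:cohomap11}, a condition unaffected by enlarging the algebraically closed ground field or by twisting --- holds for such a fibration on $J$ if and only if it holds for the corresponding one on $Y$. So the hypothesis yields that $g\colon J\to\P^1$ is $\infty$-Frobenius split, and then Lemma~\ref{lem:frob split} gives that $\gamma_e(g)$ in~\eqref{eq:cohomap} is injective for every $e\geq 1$.

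Finally I would apply the twistor criterion. Restricting $\mf X\to\P^1_{\ms U_J}$ to the formal neighborhood of the point whose fiber is $J$ presents it as the Artin--Tate family attached to an algebraization $\alpha\in\Br(J\otimes k\llbracket t\rrbracket)$ of the universal formal Brauer class of $J$, so $\alpha_J=0$ while $\alpha_{J\otimes k[\eps]}$ generates the one-dimensional space $\H^2(J,\ms O_J)$. Corollary~\ref{cor:twistor}, applied to $g\colon J\to\P^1$ using the injectivity of all the $\gamma_e(g)$ just obtained, then shows that the geometric generic fiber of this family admits no purely inseparable multisection; since the existence of a purely inseparable multisection is insensitive to enlarging the algebraically closed base field, neither does the geometric generic fiber of the twistor family over $\ms U_J$. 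But that surface is $X$, with fibration $f$, contradicting our assumption; hence $f$ is Jacobian.

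The main obstacle is the first step: one must be sure that our \emph{very general} surface $X$ occurs as the \emph{generic} member of the twistor line through its Jacobian $J$ --- a merely special member would not suffice, since specializing an elliptic surface can a priori create purely inseparable multisections. This is exactly where the genericity of $X$ enters, through the dimension bookkeeping already present in the proof of Lemma~\ref{lem:generic is generic}. The transfer of the $\infty$-Frobenius-splitting hypothesis from $Y$ to $J$ along crystalline Torelli in the second step is a subsidiary point, amounting to the fact that the splitting condition depends only on the generic point of $M_{i-1}$ and not on the chosen realization.
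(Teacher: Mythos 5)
Your argument is correct in substance and follows the paper's overall strategy---Lemma \ref{lem:generic is generic} makes the Jacobian $J$ of a non-Jacobian $f$ generic of Artin invariant $i-1$, hence isomorphic to $Y$ up to a twist of the base field, so the hypothesis gives that $g\colon J\to\P^1$ is $\infty$-Frobenius split, and then the Artin--Tate family through $J$ is fed into Corollary \ref{cor:twistor}---but your last step is genuinely different from the paper's. The paper never identifies $X$ with the generic fiber of the twistor family: it extends the marking $\tau$ over the Artin invariant $i$ locus $U$ and uses étaleness of $S_i\to M_i$, genericity of $(X,\tau)$, and connectedness of $U$ to trivialize the marked family as $(X,\tau)\times U$; then for each degree $p^e$ and polarization bound $b$, Corollary \ref{cor:twistor} together with the finite-type Hom scheme of Lemma \ref{lem:hom scheme thing} makes the locus of fibers admitting a purely inseparable multisection of that type finite, and constancy of the family transfers the conclusion to $(X,f)$ itself. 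You instead place $x_2$ over the generic point of $\ms U_J$ and treat $(X,f)$ as a geometric generic fiber; this works, but state the reason precisely: the period of $\mf X_{x_2}\cong X$ in $M_i$ lies over the generic point (re-marking moves it only by the action of an isometry of $\Lambda_i$, hence an automorphism of $M_i$), and since $\dim\ms U_J=\dim M_i$ and $\ms U_J$ is integral, the underlying point of $x_2$ is forced to be its generic point---``generically finite and dominant'' by itself does not yield this. Likewise, your closing transfer is not literally an enlargement of the base field: $\kappa(\eta)$ and $\overline{\kappa(\eta)((t))}$ receive the function field of $\ms U_J$ through unrelated embeddings, so what you need (and what is true) is that the existence of a purely inseparable multisection for the fixed fibration over $\kappa(\ms U_J)$ is independent of the algebraically closed overfield, which follows by spreading out or from the finite-type Hom schemes of Lemma \ref{lem:hom scheme thing}, the paper's own device. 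With those two points made precise, your proof is a valid and somewhat more direct alternative to the paper's trivialization-plus-Hom-scheme argument.
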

\begin{proof}
	Fix a non-Jacobian elliptic structure $f:X\to\P^1_\eta$. By \cite[Lemma 3.2.4]{1804.07282} and Lemma \ref{lem:generic is
	generic}, the Jacobian fibration $J\to\P^1_\eta$ is generic, so there is an
	isomorphism $J\simto Y$ of (unmarked) supersingular K3 surfaces. By
	assumption, then, the Jacobian fibration $J\to\P^1$ is
	$\infty$-Frobenius-split.

	Consider the Artin--Tate family $\mf X\to\P^1_{\A^1_{\eta}}$, and let
	$U\subset\A^1_{\eta}$ be the locus over which the fibers of $\mf X$ have Artin
	invariant $i$. There is a point $u\in U$ such that $\mf X_u\to\P^1_u$ is
	isomorphic to the chosen elliptic fibration $X\to\P^1$. The marking $\tau$ on
	$X$ extends to a marking $T:\Lambda_i\simto\Pic_{\mf X_U/U}$, inducing a
	diagram
	$$
	\begin{tikzcd}
		U\ar[r]\ar[d] & S_i\ar[d] \\
		\eta\ar[r]\ar[ur, dotted] & M_i.
	\end{tikzcd}
	$$
	The dotted arrow arises from the fact that $S_i\to M_i$ is étale,
	$\kappa(\eta)$ is algebraically closed, and $U$ is connected. This says
	precisely that there is an isomorphism
	\begin{equation}\label{eq:trivial}
		(\mf X_U,T)\simto(X,\tau)\times U.
	\end{equation}
	
	We now argue as in Corollary \ref{cor:they exist!}. Specifically, given a
	positive integer $e$ and a relative polarization $\ms O_{\mf X_U}(1)$ (over
	$U$), let $H_{e,b}\subset\Hom_U(\P^1,\mf X_U)$ be the open subscheme
	parametrizing morphisms whose composition $\P^1_U\to\mf X_U\to\P^1_U$ is the
	$e$th relative Frobenius and whose image has $\ms O(1)$-degree at most $b$ in
	each fiber. By Lemma \ref{lem:hom scheme thing}, $H_{e,b}\to U$ is of finite
	type. On the other hand, by Corollary \ref{cor:twistor}, the image of
	$H_{e,b}$ does not contain the generic point of $U$. It follows that the image
	is finite, hence that there is a point $t\to U(\eta)$ such that $\mf
	X_t\to\P^1_t$ does not admit a purely inseparable multisection of degree $p^e$
	over $\A^1$ and $\ms O(1)$-degree bounded by $b$. But by \eqref{eq:trivial} we
	then see that this is true of the chosen elliptic structure $f:X\to\P^1$.
	Applying this for all $e$ and $b$, we see that $f$ admits no purely
	inseparable multisection.
\end{proof}

\begin{cor}\label{cor:very general}
	Suppose $i$ is a positive integer between $8$ and $10$. If $X$ is a very
	general supersingular K3 surface of Artin invariant $i$, then every elliptic
	structure on $X$ admitting a purely inseparable multisection is Jacobian.
\end{cor}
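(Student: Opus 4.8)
The plan is to feed Theorem~\ref{thm:hulk smush} into Corollary~\ref{cor:artin tate all the things} to handle the surface at the geometric generic point of $M_i$, and then spread out. First I would check that the hypothesis of Corollary~\ref{cor:artin tate all the things} holds automatically in our range. Let $(X,\tau)$ be as in Notation~\ref{notn:burf}, realizing a geometric generic point $\eta\to M_i$, and let $f\colon X\to\P^1_\eta$ be any elliptic structure on $X$. By Lemma~\ref{lem:generic is generic} its Jacobian $J$, equipped with any marking $\tau'\colon\Lambda_{i-1}\simto\NS(J)$, realizes the geometric generic point of $M_{i-1}$, so $J$ is isomorphic as an unmarked surface to a fixed generic marked supersingular K3 surface $(Y,\tau')$ of Artin invariant $i-1$. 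Since $8\le i\le 10$ we have $i-1\ge 7$, so Theorem~\ref{thm:hulk smush} applies to $Y$: a geometric generic point of $M_{i-1}$ lies in every dense open, hence $Y$ is very general and every Jacobian elliptic structure on $Y$ is $\infty$-Frobenius-split. Corollary~\ref{cor:artin tate all the things} then gives that, for $X$ realizing the geometric generic point of $M_i$, every elliptic structure on $X$ admitting a purely inseparable multisection is Jacobian.

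It remains to upgrade this from the geometric generic point to a very general point, i.e.\ to exhibit the locus of \emph{bad} surfaces --- those of Artin invariant $i$ carrying a non-Jacobian elliptic structure with a purely inseparable multisection --- as a countable union of proper closed subsets of $M_i$. The plan is to run the argument of the first paragraph in a family. Cover $M_i$ by finite-type opens $\{V_j\}$ carrying universal marked families $\mathcal X_j\to V_j$; since the Weyl group of $\Lambda_i$ is countable, the primitive isotropic classes of $\Lambda_i$ that can become nef on some fiber of some $\mathcal X_j$ form a countable set, and each such class $a$ gives an elliptic fibration $f_a$ over a constructible locus in $V_j$. Over the part of this locus where $f_a$ is non-Jacobian, the relative Artin--Tate construction of Section~4.3 of~\cite{1804.07282} produces a twistor family $\widetilde{\mf X}\to\P^1$ over a form of $\G_a$, and by Lemma~\ref{lem:hom scheme thing} the Hom scheme parametrizing degree-$p^e$ purely inseparable multisections of $\ms O(1)$-degree at most $b$ is of finite type over the base, so its image in $V_j$ is constructible. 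By the first paragraph this image omits the generic point of $V_j$ and hence lies in a proper closed subset. Taking the union over $j$, over the countably many classes $a$, and over all $e$ and $b$ exhibits the bad locus as a countable union of proper closed subsets of $M_i$, whose complement is the desired very general locus.

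The principal obstacle is this relativization: one must verify that the Artin--Tate/twistor formalism of~\cite{1804.07282}, the Hom-scheme finiteness of Lemma~\ref{lem:hom scheme thing}, and the ``triviality over $U$'' mechanism in the proof of Corollary~\ref{cor:artin tate all the things} (the isomorphism $(\mathcal X_U,T)\simeq(X,\tau)\times U$ coming from étaleness of $S_i\to M_i$) all go through in families over the finite-type bases $V_j$, and that the reduction to countably many isotropic classes is clean. By contrast the logical core is short: Theorem~\ref{thm:hulk smush} supplies the hypothesis of Corollary~\ref{cor:artin tate all the things}, which then gives the conclusion at the generic point, and the range $8\le i\le 10$ is used only through the inequalities $i-1\ge 7$ and $i\le 10$.
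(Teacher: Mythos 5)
Your first paragraph is exactly the paper's proof: the paper deduces this corollary in one line from Theorem~\ref{thm:hulk smush} and Corollary~\ref{cor:artin tate all the things}, using the range $8\le i\le 10$ only through $i-1\ge 7$, and using, as you do, that the geometric generic point of $M_{i-1}$ avoids the countably many proper closed bad loci so that Theorem~\ref{thm:hulk smush} supplies the hypothesis of Corollary~\ref{cor:artin tate all the things}. Where you diverge is your second paragraph: the paper performs no relativization, because no upgrade from the geometric generic point to very general points is needed beyond a standard observation. Since $M_i$ is of finite type over $\F_p$, it has only countably many proper closed subvarieties defined over $\overline{\F}_p$; a $k$-point avoiding all of them (which is what ``very general'' amounts to here) maps to the generic point of $M_i$ and is therefore itself a geometric generic point, so the generic-point conclusion of your first paragraph applies to it verbatim. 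Your proposed family argument is thus redundant, and as sketched it is also somewhat misdirected: the purely inseparable multisections one must exclude live on the fibers of the universal marked family itself, so one would apply Lemma~\ref{lem:hom scheme thing} directly to $\mathcal X_j\to V_j$ rather than to relative twistor families (the Artin--Tate construction is only needed to establish the generic-point statement, which your first paragraph already provides), and one would still owe an argument that the locus where a fixed class of $\Lambda_i$ defines a non-Jacobian elliptic fibration is constructible. None of this is wrong in spirit, but it re-proves in families what the countable-field observation gives for free and duplicates the Hom-scheme mechanism already built into the proofs of Corollary~\ref{cor:they exist!} and Corollary~\ref{cor:artin tate all the things}.
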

\begin{proof}
	This follows from Theorem \ref{thm:hulk smush} and Corollary \ref{cor:artin
	tate all the things}.
\end{proof}

\begin{cor}\label{cor:hulk smash}
	No elliptic structure on a very general supersingular K3 surface admits a
	purely inseparable multisection.
\end{cor}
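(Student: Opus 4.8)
The plan is to deduce this from Corollary~\ref{cor:very general}, which handles very general supersingular K3 surfaces of Artin invariant $i$ for $8\leq i\leq 10$; the one thing to add is that an unqualified ``very general supersingular K3 surface'' has Artin invariant exactly $10$. To see this I would recall the structure of the moduli of supersingular K3 surfaces: by Artin, Rudakov--Shafarevich, and Ogus it is stratified by the Artin invariant $\sigma_0\in\{1,\dots,10\}$, and the locus $\{\sigma_0\leq j\}$ is closed of dimension $j-1$, so the $\sigma_0=10$ stratum is open and dense in the whole moduli space (Ogus's space $M_{10}$). Consequently a very general point of the supersingular moduli --- one avoiding a countable union of proper closed subschemes --- lies in the $\sigma_0=10$ stratum, and ``very general in the supersingular moduli'' coincides with ``very general in $M_{10}$''. (As always, such phrasing presupposes that $k$ is uncountable.)

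Granting this reduction, the corollary is immediate: Corollary~\ref{cor:very general} with $i=10$ says that for a very general supersingular K3 surface $X$ of Artin invariant $10$, every elliptic structure on $X$ admitting a purely inseparable multisection is Jacobian; since by definition a non-Jacobian elliptic structure is not Jacobian, the contrapositive shows it admits no purely inseparable multisection. This is precisely the assertion of Corollary~\ref{cor:hulk smash}, read --- as in the abstract --- for \emph{non-Jacobian} elliptic structures; this qualification is forced, since the zero section of a Jacobian fibration is itself a degree-one purely inseparable multisection. Equivalently: on such an $X$, the presence of a purely inseparable multisection of positive degree on \emph{any} elliptic structure implies the existence of an honest section, recovering the ``only if'' direction of Theorem~\ref{thm:very general bidness}.

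I do not expect any real obstacle in the intended reading: the substantive work --- the codimension estimates of Section~\ref{sec:non-pi-inj}, the dimension count of Corollary~\ref{cor:dim cram}, the $\infty$-Frobenius splitness of very general Jacobian fibrations in Theorem~\ref{thm:hulk smush}, and the twistor-line transport of Corollary~\ref{cor:artin tate all the things} --- has already been carried out, and the countable bookkeeping (over the countably many classes $a\in\Lambda_{10}$ representing an elliptic fibration and over the countably many étale charts $V_i\to M_{10}$) is already absorbed into the ``very general'' quantifier of Corollary~\ref{cor:very general}; the one point deserving an explicit sentence is the identification of the generic stratum of the supersingular moduli with $M_{10}$, i.e.\ Ogus's description together with the Artin-invariant dimension count. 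The only genuinely harder question --- and the reason the statement is really about non-Jacobian structures --- would be to rule out purely inseparable multisections of \emph{positive} degree on Jacobian fibrations of a very general supersingular K3 surface; this is not a formal consequence of the results above, and I would not attempt it here.
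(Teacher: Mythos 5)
Your proposal is correct and matches the paper's own (one-line) proof, which simply deduces the statement from Corollary \ref{cor:very general}; your additional remarks --- that a very general supersingular K3 surface lies in the dense open Artin invariant $10$ stratum, and that the statement must be read for non-Jacobian structures (or as the ``multisection implies section'' assertion of Theorem \ref{thm:very general bidness}), since a section is trivially a purely inseparable multisection --- merely make explicit what the paper leaves implicit.
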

\begin{proof}
	This follows immediately from Corollary \ref{cor:very general}.
\end{proof}

\section{Table of critical configurations}\label{sec:tables}

Let $\bPhi=(\Phi_1,\dots,\Phi_n)$ be a configuration of additive fiber types. We
have defined two associated integers: $\zeta(\bPhi)$ (by definition,
$\zeta(\bPhi)=\sum_i\zeta(\Phi_i)$, where $\zeta(\Phi_i)$ is computed using Table
\ref{table:1}) and $B$ (defined in Notation \ref{notation:the number B of a
configuration}). By Proposition \ref{prop:fiber mass}, the locus
$\WeierstrassData^{-2,ab\neq 0}[\bPhi]$ of fibrations realizing $\bPhi$ has
codimension at least $\zeta(\bPhi)$. By Proposition \ref{prop:codimension bounds
using table}, the sublocus of $\WeierstrassData^{-2,ab\neq 0}[\bPhi]$
parametrizing fibrations that are not $\infty$-Frobenius split is a countable
union of locally closed subspaces of codimension at least $B+\zeta(\bPhi)$ at every
point.

The following table shows the full list of all additive fiber configurations
that are critical (in the sense of Condition \ref{conditions: fibers}), along
with the bounds $\zeta(\bPhi)$ and $B+\zeta(\bPhi)$. The key observation is that for
each critical fibration $\bPhi$ the bound $B+\zeta(\bPhi)$ for the codimension of
the non $\infty$-Frobenius split locus is at least $13$, so the full Artin
invariant $s$ locus cannot fit for $s$ at least $7$, as claimed in Proposition
\ref{prop:non split estimate ab not 0}.

\renewcommand{\arraystretch}{1.1}
\begin{center}\label{table:2}
  \begin{longtable}{|c|c|c|}
	\caption{Critical fiber configurations and associated bounds on
	codimensions}\\

	\hline 
	\multicolumn{1}{|c|}{Configuration} & \multicolumn{1}{c|}{$\zeta(\bPhi)$} &
		\multicolumn{1}{c|}{$B+\zeta(\bPhi)$} \\ 
		\hline 
		\endfirsthead
		
		\hline 
		\multicolumn{1}{|c|}{Configuration} & \multicolumn{1}{c|}{$\zeta(\bPhi)$}
		& \multicolumn{1}{c|}{$B+\zeta(\bPhi)$} \\ 
		
		\hline 
		\endhead
		
		\hline \multicolumn{3}{|r|}{{Continued on next page}} \\ 
		
		\hline
		\endfoot
		
		\hline
		\endlastfoot

		$8\II$ & 8 & 13\\
		\hline
		$5\II$ + $2\III$ & 9 & 13\\
		\hline
		$7\II$ + $\III$ & 9 & 14\\
		\hline
		$2\II$ + $4\III$ & 10 & 13\\
		\hline
		$5\II$ + $\III$ + $\IV$ & 10 & 14\\
		\hline
		$4\II$ + $3\III$ & 10 & 14\\
		\hline
		$6\II$ + $2\III$ & 10 & 14\\
		\hline
		$3\II$ + $\III$ + $2\IV$ & 11 & 14\\
		\hline
		$2\II$ + $3\III$ + $\IV$ & 11 & 14\\
		\hline
		$\II$ + $5\III$ & 11 & 14\\
		\hline
		$5\II$ + $\III$ + $\I_n^\ast$ & 11 & 14\\
		\hline
		$4\II$ + $2\III$ + $\IV$ & 11 & 14\\
		\hline
		$3\II$ + $4\III$ & 11 & 14\\
		\hline
		$5\II$ + $3\III$ & 11 & 14\\
		\hline
		$2\II$ + $\III$ + $2\I_n^\ast$ & 12 & 13\\
		\hline
		$\II$ + $\III$ + $3\IV$ & 12 & 14\\
		\hline
		$3\III$ + $2\IV$ & 12 & 14\\
		\hline
		$4\II$ + $2\I_n^\ast$ & 12 & 13\\
		\hline
		$3\II$ + $\III$ + $\IV$ + $\I_n^\ast$ & 12 & 14\\
		\hline
		$2\II$ + $3\III$ + $\I_n^\ast$ & 12 & 14\\
		\hline
		$2\II$ + $2\III$ + $2\IV$ & 12 & 14\\
		\hline
		$\II$ + $4\III$ + $\IV$ & 12 & 15\\
		\hline
		$6\III$ & 12 & 15\\
		\hline
		$4\II$ + $2\III$ + $\I_n^\ast$ & 12 & 14\\
		\hline
		$3\II$ + $3\III$ + $\IV$ & 12 & 15\\
		\hline
		$2\II$ + $5\III$ & 12 & 14\\
		\hline
		$4\II$ + $4\III$ & 12 & 14\\
		\hline
		$\II$ + $3\I_n^\ast$ & 13 & 14\\
		\hline
		$\III$ + $\IV$ + $2\I_n^\ast$ & 13 & 14\\
		\hline
		$2\II$ + $\IV$ + $2\I_n^\ast$ & 13 & 14\\
		\hline
		$\II$ + $2\III$ + $2\I_n^\ast$ & 13 & 14\\
		\hline
		$\II$ + $\III$ + $2\IV$ + $\I_n^\ast$ & 13 & 14\\
		\hline
		$3\III$ + $\IV$ + $\I_n^\ast$ & 13 & 15\\
		\hline
		$2\III$ + $3\IV$ & 13 & 15\\
		\hline
		$3\II$ + $\III$ + $2\I_n^\ast$ & 13 & 14\\
		\hline
		$2\II$ + $2\III$ + $\IV$ + $\I_n^\ast$ & 13 & 15\\
		\hline
		$\II$ + $4\III$ + $\I_n^\ast$ & 13 & 15\\
		\hline
		$\II$ + $3\III$ + $2\IV$ & 13 & 15\\
		\hline
		$5\III$ + $\IV$ & 13 & 16\\
		\hline
		$3\II$ + $3\III$ + $\I_n^\ast$ & 13 & 15\\
		\hline
		$2\II$ + $4\III$ + $\IV$ & 13 & 15\\
		\hline
		$\III$ + $3\I_n^\ast$ & 14 & 15\\
		\hline
		$2\IV$ + $2\I_n^\ast$ & 14 & 15\\
		\hline
		$2\II$ + $\III$ + $\I_n^\ast$ + $\IV^\ast$ & 14 & 14\\
		\hline
		$2\II$ + $3\I_n^\ast$ & 14 & 15\\
		\hline
		$\II$ + $\III$ + $\IV$ + $2\I_n^\ast$ & 14 & 15\\
		\hline
		$3\III$ + $2\I_n^\ast$ & 14 & 15\\
		\hline
		$2\III$ + $2\IV$ + $\I_n^\ast$ & 14 & 15\\
		\hline
		$2\II$ + $3\III$ + $\IV^\ast$ & 14 & 15\\
		\hline
		$2\II$ + $2\III$ + $2\I_n^\ast$ & 14 & 15\\
		\hline
		$\II$ + $3\III$ + $\IV$ + $\I_n^\ast$ & 14 & 16\\
		\hline
		$4\III$ + $2\IV$ & 14 & 16\\
		\hline
		$\II$ + $\III$ + $2\IV^\ast$ & 15 & 15\\
		\hline
		$\II$ + $2\I_n^\ast$ + $\IV^\ast$ & 15 & 15\\
		\hline
		$\III$ + $\IV$ + $\I_n^\ast$ + $\IV^\ast$ & 15 & 15\\
		\hline
		$\IV$ + $3\I_n^\ast$ & 15 & 16\\
		\hline
		$2\II$ + $\III$ + $\I_n^\ast$ + $\III^\ast$ & 15 & 15\\
		\hline
		$\II$ + $2\III$ + $\I_n^\ast$ + $\IV^\ast$ & 15 & 15\\
		\hline
		$\II$ + $\III$ + $3\I_n^\ast$ & 15 & 16\\
		\hline
		$3\III$ + $\IV$ + $\IV^\ast$ & 15 & 16\\
		\hline
		$2\III$ + $\IV$ + $2\I_n^\ast$ & 15 & 16\\
		\hline
		$\I_n^\ast$ + $2\IV^\ast$ & 16 & 16\\
		\hline
		$2\II$ + $2\III^\ast$ & 16 & 16\\
		\hline
		$\II$ + $\III$ + $\IV^\ast$ + $\III^\ast$ & 16 & 16\\
		\hline
		$\II$ + $2\I_n^\ast$ + $\III^\ast$ & 16 & 16\\
		\hline
		$2\III$ + $2\IV^\ast$ & 16 & 16\\
		\hline
		$\III$ + $\IV$ + $\I_n^\ast$ + $\III^\ast$ & 16 & 16\\
		\hline
		$\III$ + $2\I_n^\ast$ + $\IV^\ast$ & 16 & 16\\
		\hline
		$4\I_n^\ast$ & 16 & 17\\
		\hline
		$\IV$ + $2\III^\ast$ & 17 & 17\\
		\hline
		$\I_n^\ast$ + $\IV^\ast$ + $\III^\ast$ & 17 & 17\\
  \end{longtable}
\end{center}
%\printbibliography
\bibliography{biblio}{}
\bibliographystyle{hplain}
\end{document}